\documentclass[12pt]{amsart}
\usepackage[utf8]{inputenc}
\usepackage{amsmath}
\usepackage{amsthm}
\usepackage{amssymb,colonequals,mathrsfs,mathtools}
\usepackage{array,enumitem,yfonts}
\usepackage{comment}
\usepackage[alphabetic]{amsrefs}
\usepackage[all,cmtip]{xy}
\usepackage[colorlinks,anchorcolor=blue,citecolor=blue,linkcolor=blue,urlcolor=blue,bookmarksopen=true]{hyperref}
\allowdisplaybreaks

\usepackage[margin=1in]{geometry}

\usepackage{tikz}
\usetikzlibrary{cd,arrows}
\tikzset{>=stealth}
\tikzcdset{arrow style=tikz}
\tikzset{link/.style={column sep=1.8cm,row sep=0.16cm}}

\AtBeginDocument{%
	\def\MR#1{}
}
\usepackage{fancyhdr}
\fancypagestyle{titlepage}
{
	\fancyhf{}

	\fancyfoot[l]{
	\href{https://mathscinet.ams.org/mathscinet/msc/msc2020.html}
		{\emph{2020 Mathematics Subject Classification}} 14G15, 14G50, 14M12, 94B27
		\\
		\emph{Keywords:} Chebotarev density theorem, determinantal varieties, quasireflexive varieties, rank-metric codes.
	}
}

\newcommand{\bZ}{\mathbb{Z}}    
\newcommand{\bF}{\mathbb{F}}    
\newcommand{\Gr}{\operatorname{Gr}} 
\newcommand{\bA}{\mathbb{A}}    
\newcommand{\bP}{\mathbb{P}}    
\newcommand{\Mon}{\operatorname{Mon}}    
\newcommand{\Gal}{\operatorname{Gal}} 
\newcommand{\GL}{\operatorname{GL}}

\newcommand{\Hom}{\operatorname{Hom}}
\newcommand{\rk}{\operatorname{rank}}
\newcommand{\Frob}{\operatorname{Frob}}

\newtheorem{thm}{Theorem}[section]
\newtheorem{prop}[thm]{Proposition}
\newtheorem{lemma}[thm]{Lemma}
\newtheorem{cor}[thm]{Corollary}
\numberwithin{equation}{section}

\theoremstyle{definition}

\newtheorem{defn}[thm]{Definition}

\newtheorem{rmk}[thm]{Remark}

\begin{document}

\title{A geometric approach to the density of rank-metric codes}
\author{Shamil~Asgarli
    \and Lian~Duan
    \and Nathan~Kaplan
    \and Kuan-Wen~Lai}

\newcommand{\ContactInfo}{{
\bigskip\footnotesize

\medskip
\noindent S.~Asgarli,
\textsc{Department of Mathematics and Computer Science \\
Santa Clara University \\ 
Santa Clara, CA 95050, USA}\par\nopagebreak
\noindent\texttt{sasgarli@scu.edu}

\bigskip
\noindent L.~Duan,
\textsc{Institute of Mathematical Sciences\\
ShanghaiTech University\\
No.393 Middle Huaxia Road, Pudong New District, 
Shanghai, China}\par\nopagebreak
\noindent\textsc{Email:} \texttt{duanlian@shanghaitech.edu.cn}

\bigskip
\noindent N.~Kaplan,
\textsc{Department of Mathematics \\
University of California, Irvine \\ 
Irvine, CA 92697, USA}\par\nopagebreak
\noindent\texttt{nckaplan@math.uci.edu}

\bigskip
\noindent K.-W.~Lai,
\textsc{Department of Smart Computing and Applied Mathematics \\
Tunghai University \\
No.~1727, Sec.~4, Taiwan~Blvd., Xitun~Dist., Taichung~City 407224, Taiwan}
\par\nopagebreak
\smallskip
\noindent\textsc{National Center for Theoretical Science \\
No.~1, Sec.~4, Roosevelt~Rd., Taipei~City 106319, Taiwan}
\par\nopagebreak
\noindent\textsc{Email:} \texttt{kwlai@thu.edu.tw}
}}

\begin{abstract}
We study the asymptotic density of $\mathbb{F}_q$-point-free linear sections of geometrically irreducible projective varieties over finite fields. We then apply these results to rank-metric codes via determinantal varieties. Our approach recovers the known cases in which the density tends to $0$ or $1$ and determines the limit in the cases where it was previously unknown. To compute these previously unknown limits, we extend the notion of quasireflexivity to higher-dimensional varieties and show that determinantal varieties satisfy this property. This allows us to invoke the Chebotarev density theorem for varieties over finite fields to obtain the desired estimate.
\end{abstract}

\maketitle
\thispagestyle{titlepage}

\setcounter{tocdepth}{2}
\makeatletter
\def\l@subsection{\@tocline{2}{0pt}{2.3pc}{5pc}{}}
\makeatother


\section{Introduction}
\label{sec:intro}

Let $\bF_q$ be the finite field with $q$ elements, where $q$ is a prime power, and regard $\bF_q^{mn}$ as the space of $m\times n$ matrices over $\bF_q$ equipped with the rank metric
$$
    d(A,B) \colonequals \rk(A-B), \qquad
    A, B\in\bF_q^{mn}.
$$
Throughout the paper, we assume without loss of generality that $m\geq n$.  A \emph{linear rank-metric code} is a linear subspace $V\subseteq\bF_q^{mn}$. One of the most important invariants of a nonzero code is its \emph{minimum (rank) distance}, defined by
\begin{align*}
    d_R(V) \colonequals & \min\left\{
        d(A, B) \;\middle|\; A, B \in V,\; A\neq B
    \right\} \\
    =& \min\left\{
        \rk(M) \;\middle|\; M \in V\setminus\{0\}
    \right\}.
\end{align*}
The minimum distance determines the error-correcting capabilities of the code.

Rank-metric codes have become a central object of study in modern coding theory. They arise naturally in contexts where errors are measured by rank, with important applications to random network coding and secure network coding \cites{SilvaKS08,SilvaK11}. They are also closely related to several areas of pure mathematics, including $q$-polymatroids and finite geometry \cites{GJLR20,CMPZ17}. For a general introduction to the subject, we refer the reader to the surveys \cites{GKR23,BHLPRW22}.

The main problem that we study is motivated by analogous results for linear codes in the Hamming metric. There has been extensive study of the distribution of the minimum distance among all $k$-dimensional linear codes $C \subseteq \bF_q^n$. For a recent notable example, see \cite{HHLT22}.  Much of the work in this area has focused on the asymptotic problem where $q$ is fixed, while $k$ and $n$ grow together in such a way that $k/n$ converges. Loidreau studied the analogue of this problem for rank-metric codes in \cite{Loi14}.

We are interested in a different kind of limit where $q$ tends to infinity while the other parameters remain fixed. Recall that if $C \subseteq \bF_q^n$ is a $k$-dimensional linear code with minimum Hamming distance $d$, then the Singleton bound gives $d \le n-k+1$. Codes for which equality holds are called \emph{MDS} (maximum distance separable). When $q\to\infty$ with $k$ and $n$ fixed, it is not difficult to show that most $k$-dimensional linear codes $C\subseteq \bF_q^n$ are MDS. For a quantitative result in this direction, see \cite{Kai14}.

We phrase our main results in terms of the following density. Fix integers $1\leq k\leq mn$ and $\delta\geq 1$, and consider the density of $k$-dimensional codes with minimum rank distance at least $\delta$:
$$
    D_q(m\times n, k, \delta) \colonequals \frac{\left|\{
        \text{codes }V\subseteq\bF_q^{mn}
    \mid
        \dim V = k,\;
        d_R(V)\geq\delta
    \}\right|}{\left|\{
        \text{codes }V\subseteq\bF_q^{mn}
    \mid
        \dim V = k
    \}\right|}.
$$
The rank-metric analogue of the Singleton bound asserts that for a $k$-dimensional linear rank-metric code $V \subseteq\bF_q^{mn}$, we have $k \le m(n-d_R(V) + 1)$ \cite{GKR23}*{Theorem~3.1}. Codes that reach equality in this bound are called \emph{MRD} (maximum rank distance). They are the rank-metric analogues of MDS codes.

There has been significant recent interest in density questions for MRD codes. For an overview of this topic, see \cite{GKR23}*{Section~6}. Gruica and Ravagnani have determined the asymptotic density of MRD codes for all sets of parameters.

\begin{thm}[\cite{GR22}*{Theorem~5.9}]
Assume that $1\leq\delta\leq n$. Then
$$
    \lim_{q\to\infty} D_q(
        m \times n,\,
        m(n-\delta+1), \, \delta
    ) = \begin{cases}
        1 & \text{ if } \delta = 1,\\
        \sum_{i=0}^m \frac{(-1)^i}{i!} & \text{ if } n = \delta = 2,\\
        0 & \text{ otherwise.}
    \end{cases}
$$
\end{thm}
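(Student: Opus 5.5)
The plan is to recast the density as a point count on a linear section of a determinantal variety. Let $\mathcal{D}_{\delta-1}\subseteq\bP^{mn-1}$ be the projective variety of $m\times n$ matrices of rank at most $\delta-1$. It is geometrically irreducible of codimension $(m-\delta+1)(n-\delta+1)$, so its dimension is $mn-1-(m-\delta+1)(n-\delta+1)$. A $k$-dimensional code $V$ has $d_R(V)\geq\delta$ exactly when the linear subspace $\bP(V)\cong\bP^{k-1}$ contains no $\bF_q$-point of $\mathcal{D}_{\delta-1}$. For $k=m(n-\delta+1)$ we get
$$\dim\bigl(\bP(V)\cap\mathcal{D}_{\delta-1}\bigr)\geq k-1-(m-\delta+1)(n-\delta+1)=(\delta-1)(n-\delta+1)-1,$$
with equality for a generic $V$. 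We then split into three cases.

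\textbf{Case $\delta=1$.} Here $\mathcal{D}_0$ is empty, since no nonzero matrix has rank $0$. Every nonzero code therefore has $d_R\geq 1$, and the density is identically $1$.

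\textbf{Case of positive-dimensional sections ($\delta\geq2$, $(\delta-1)(n-\delta+1)\geq2$).} The generic section $\bP(V)\cap\mathcal{D}_{\delta-1}$ has dimension at least $1$. By Bertini-type irreducibility, for all $V$ outside a proper closed subset of the Grassmannian the section is geometrically irreducible. That subset contains only an $O(1/q)$ fraction of the $\bF_q$-points. The Lang--Weil estimate, with constants uniform in the degree, then gives $\asymp q^{\dim}$ rational points on each such section, in particular at least one for $q$ large. Hence the density tends to $0$.

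\textbf{Case of zero-dimensional sections ($\delta=n=2$).} This is the case $(\delta-1)(n-\delta+1)=1$, so the expected dimension is $0$ and $\bP(V)\cap\mathcal{D}_1$ is a finite set of $\deg\mathcal{D}_1$ points. Here $\mathcal{D}_1$ is the Segre variety $\bP^{m-1}\times\bP^{1}$, of degree $m$. Consider the incidence correspondence over the open set of the Grassmannian where the section is reduced of length $m$. I would show that its monodromy group is the full $S_m$. A generic linear section of an irreducible variety of the complementary dimension has monodromy $S_{\deg}$; this is Harris' uniform position principle, valid in characteristic $0$ and needing care in characteristic $p$. Granting this, the Chebotarev density theorem for varieties over finite fields shows that the Frobenius classes equidistribute in $S_m$. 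The proportion of $V$ whose section has no rational point is therefore asymptotically the proportion of fixed-point-free permutations in $S_m$, namely $\sum_{i=0}^m(-1)^i/i!$. The remaining case, $(\delta-1)(n-\delta+1)=0$ with $\delta\geq2$, forces $\delta=n+1$, which is outside the range.

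The main obstacle is the monodromy computation in positive characteristic. The full symmetric group is also needed geometrically, not just arithmetically, so that Chebotarev applies. The case $(\delta-1)(n-\delta+1)=1$ does not look like it forces $\delta=n=2$, since $\delta=2,n=2$ is the only solution, so that case is fine. For the monodromy I would first try to prove double transitivity plus a simple tangency (a transposition), using the fact that the variety is not strange or is quasireflexive. This is what the abstract hints at. Alternatively, one can argue directly for $\bP^{m-1}\times\bP^1$: a $2m$-dimensional code meeting the rank-one locus corresponds to the roots of a degree-$m$ binary form in the $\bP^1$ factor, namely $\det$ of an $m\times m$ pencil. Every binary form of degree $m$ arises with essentially uniform frequency, and the count of rootless forms over $\bF_q$ tends to the derangement proportion. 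This elementary route would avoid the characteristic issues entirely.
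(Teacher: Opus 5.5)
The gap is in the threshold case $n=\delta=2$: you grant that the geometric monodromy of $\mathcal X^\circ\to\mathcal G^\circ$ is $S_m$, and that claim is the whole content of the argument. The rest is fine. Your $\delta=1$ case is correct. Your sparse case is a legitimate alternative to the paper. The paper recovers this theorem as the case $k=m(n-\delta+1)$ of Corollary~\ref{cor:main}, where the trichotomy becomes $(\delta-1)(n-\delta+1)>1$, $=0$, $=1$, with $\Delta=m$. It handles the sparse regime by Chebyshev's inequality, using only the mean and variance of $|(L\cap Y_\delta)(\bF_q)|$ and Lang--Weil on $Y_\delta$ itself. Your route, Bertini irreducibility followed by uniform Lang--Weil on each good section, also works, provided you bound the $\bF_q$-points of the exceptional locus in the Grassmannian uniformly in $q$. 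There is also a small slip: for $n=\delta=2$ the codes have dimension $k=m$, and $2m$ is the ambient dimension.

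On the monodromy route: uniform position, as you note, does not settle characteristic $p$, and ``double transitivity plus a transposition'' is only a plan. The paper carries it out in Lemma~\ref{lem:full-monodromy-threshold}:
\begin{itemize}
\item double transitivity comes from the irreducibility of $(\mathcal X^\circ\times_{\mathcal G^\circ}\mathcal X^\circ)\setminus D$, an open subset of a Grassmannian bundle over $Y\times Y$ minus the diagonal;
\item a transposition comes, via Ballico--Hefez, from a quasireflexive section.
\end{itemize}
The existence of that section for every determinantal variety in every characteristic is the content of Section~\ref{sec:quasiref}. For $\bP^{m-1}\times\bP^1$ you can write one down directly. Write $m\times 2$ matrices by columns as $(x_1\mid x_2)$ and use the affine chart $V_M=\{(x_1\mid x_2): x_1=-Mx_2\}$ of $\Gr(\bP^{m-1},\bP^{2m-1})$, with $M$ an $m\times m$ matrix. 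The rank-one elements of $V_M$ are exactly $(-\lambda u\mid u)$ with $Mu=\lambda u$ and $u\neq 0$. Take $M=J_2(\lambda_1)\oplus\operatorname{diag}(\lambda_2,\ldots,\lambda_{m-1})$ with distinct $\lambda_i$. This gives $m-1$ intersection points on a smooth variety of degree $m$, so by B\'ezout there is exactly one double point, in any characteristic.

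Your elementary route is also sound in principle; it is the eigenvalue argument of Antrobus and Gluesing-Luerssen. But ``every binary form arises with essentially uniform frequency'' is precisely the lemma that needs proof. Here is one way to supply it.
\begin{itemize}
\item The chart above contains all but $O(q^{m^2-1})$ of the $q^{m^2}(1+O(q^{-1}))$ codes, and on it $d_R(V_M)\geq 2$ if and only if $M$ has no eigenvalue in $\bF_q$.
\item A matrix with squarefree characteristic polynomial $f=\prod_i f_i$ is conjugate to the companion matrix of $f$. That matrix has centralizer $(\bF_q[x]/(f))^\times$, of order $\prod_i(q^{\deg f_i}-1)$.
\item Hence exactly $|\GL_m(\bF_q)|/\prod_i(q^{\deg f_i}-1)=q^{m^2-m}(1+O(q^{-1}))$ matrices have characteristic polynomial $f$, uniformly in $f$.
\item Matrices with non-squarefree characteristic polynomial number $O(q^{m^2-1})$, by Schwartz--Zippel applied to the discriminant.
\item Inclusion--exclusion shows that $\sum_{i=0}^m(-1)^i\binom{q}{i}q^{m-i}$ monic degree-$m$ polynomials have no root in $\bF_q$, and all but $O(q^{m-1})$ of these are squarefree.
\end{itemize}
Since $\binom{q}{i}q^{-i}\to 1/i!$, this gives the limit $\sum_{i=0}^m(-1)^i/i!$. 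Once either this count or the monodromy argument is actually carried out, the proof is complete. As written, the threshold case is asserted rather than proved.
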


\noindent The $n=\delta=2$ case is due to Antrobus and Gluesing-Luerssen \cite{AGL19}*{Corollary~VII.5}.

\medskip

Gruica and Ravagnani apply their techniques to obtain upper and lower bounds on the density of rank-metric codes with more general minimum distance and dimension. This allows them to determine the behavior of $D_q(m\times n, k, \delta)$ as $q\to \infty$ in most cases.

\begin{thm}[\cite{GR22}*{Theorem~5.12}]
Assume that $1\leq k\leq mn$ and $1\leq\delta\leq n$. Then
$$
    \lim_{q\to\infty}D_q(m\times n, k,\delta) = \begin{cases}
        0 & \text{if}\quad(\delta-1)(m+n-\delta+1) > mn-k+1, \\
        1 & \text{if}\quad(\delta-1)(m+n-\delta+1) < mn-k+1.
    \end{cases}
$$
\end{thm}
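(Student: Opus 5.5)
We recast the density geometrically, as the paper's title suggests. A code $V$ of dimension $k$ has $d_R(V)\geq\delta$ exactly when $V$ contains no nonzero matrix of rank at most $\delta-1$. Projectivize: let $X_\delta\subseteq\bP^{mn-1}$ be the determinantal variety of $m\times n$ matrices of rank $\leq \delta-1$. It is geometrically irreducible, of dimension $(\delta-1)(m+n-\delta+1)-1$, and defined over $\bF_q$ independently of $q$; in particular it has degree bounded independently of $q$. The condition $d_R(V)\ge\delta$ then says that the linear subspace $\bP(V)\cong\bP^{k-1}$ satisfies $\bP(V)\cap X_\delta$ has no $\bF_q$-points. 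So $D_q(m\times n,k,\delta)$ is the proportion of $\bF_q$-points of $\Gr(k,mn)$ whose linear space misses $X_\delta(\bF_q)$. Write $c=mn-k$ for the codimension of $\bP(V)$ and $e=\dim X_\delta=(\delta-1)(m+n-\delta+1)-1$. The two regimes of the theorem are $e>c$ and $e<c$ (the inequality $(\delta-1)(m+n-\delta+1)>mn-k+1$ is $e>c$).

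\emph{Case $e<c$ (limit $1$).} We bound the proportion of $V$ meeting $X_\delta(\bF_q)$ by a union bound over points:
\[
1-D_q\le \frac{|X_\delta(\bF_q)|\cdot\#\{V\ni x\}}{\#\{V\}}.
\]
The number of $k$-dimensional subspaces containing a fixed line, divided by the total, is $\asymp q^{-(mn-k)}=q^{-c}$. By Lang--Weil, $|X_\delta(\bF_q)|=O(q^{e})$. Hence $1-D_q=O(q^{e-c})\to 0$.

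\emph{Case $e>c$ (limit $0$).} Here a generic $\bP(V)$ meets $X_\delta$ in a variety $Y_V$ of dimension $e-c\geq 1$. We show that for all but a vanishing proportion of $V$, the section $Y_V$ is geometrically irreducible, or at least has a geometrically irreducible component defined over $\bF_q$. We then apply Lang--Weil with uniform constants, since the degree of $Y_V$ is bounded, to get $|Y_V(\bF_q)|\geq q^{e-c}-O(q^{e-c-1/2})>0$ for large $q$.

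The irreducibility input is a Bertini-type theorem. Consider the incidence variety over $\Gr(k,mn)$ whose fiber over $V$ is $\bP(V)\cap X_\delta$. Because $X_\delta$ is geometrically irreducible and $e-c\ge1$, Bertini's theorem (the Jouanolou version, valid in every characteristic for linear sections of irreducible varieties of dimension greater than the codimension) says the generic fiber is geometrically irreducible. The locus of $V$ with geometrically reducible fiber is therefore a proper closed, constructible subset $B\subsetneq\Gr(k,mn)$ of degree bounded independently of $q$. Hence $|B(\bF_q)|=O(q^{\dim\Gr-1})$, which is a vanishing proportion, and the good $V$ dominate. So $D_q\to0$.

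The main obstacle is making this uniformity in $q$ rigorous. The bad locus $B$ and the Lang--Weil constants must be controlled independently of $q$ and of the characteristic. I would handle this by working over $\bZ$: $X_\delta$ and the incidence correspondence are defined over $\operatorname{Spec}\bZ$, so constructibility and degree bounds spread out across all characteristics, and the effective Lang--Weil bounds depend only on dimension and degree. One point needs checking: that the generic-fiber irreducibility holds in every characteristic $p$, not just generically over $\bZ$. Jouanolou's Bertini theorem applies over any algebraically closed field, so this should be fine.
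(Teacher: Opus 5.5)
Your proof is correct, but both halves follow a different route from the paper.

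\textbf{Abundant case.} Your union bound over $X_\delta(\bF_q)$ is Markov's inequality $\operatorname{Prob}(\Theta\geq 1)\leq\mu$ for the point-count $\Theta$ of Section~\ref{sec:sparse-abd}. Combined with the exact ratio $(q^k-1)/(q^{mn}-1)\leq q^{-c}$ and $|X_\delta(\bF_q)|=O(q^{e})$, it gives $1-D_q=O(q^{e-c})$; the point bound even follows from counting matrices of rank $<\delta$ directly. The paper instead bounds $1-D_q$ by the proportion of $\bF_q$-points of the image $\mathcal N=\pi_{\Gr}(\mathcal X)$ of the incidence correspondence. That requires the dimension count of Lemmas~\ref{lem:dim_incidence}--\ref{lem:img-in-Gr} and a bounded-complexity Lang--Weil estimate for $\mathcal N$. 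The rate is the same, so your version is simply more elementary.

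\textbf{Sparse case.} The paper uses the second moment: Chebyshev's inequality with the Poonen--Slavov/Kmentt--Shute mean and variance estimates, plus Lang--Weil for the single fixed variety $X_\delta$. This needs no Bertini theorem, needs uniformity only for one fixed scheme, and gives the sharper rate $D_q=O(q^{-(e-c)})$. Your route combines Jouanolou's Bertini irreducibility, uniform Lang--Weil on each geometrically irreducible section, and a codimension-one bound on the bad locus $B$. Without more information on $\operatorname{codim}B$ it gives only $D_q=O(q^{-1})$, and its inputs are heavier. In exchange it shows more: most sections are geometrically irreducible with $q^{e-c}+O(q^{e-c-1/2})$ points.

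\textbf{Uniformity.} The step you flag does close as you suggest. By EGA~IV~9.7.7, $B$ is constructible in $\Gr_{\bZ}$, so every irreducible component $W$ of $\overline{B}$ has its generic point in $B$. If $W$ lies over a single prime $p$, it sits inside $\overline{B_p}$, which is proper by Bertini in characteristic $p$. If $W$ dominates $\operatorname{Spec}\bZ$, it is flat, and all its fibres have dimension $\dim W_{\mathbb{Q}}<\dim\Gr$ by Bertini in characteristic $0$. So every fibre of the fixed closed subscheme $\overline{B}$ has dimension $<\dim\Gr$, which gives $|B(\bF_q)|=O(q^{\dim\Gr-1})$ uniformly in $q$.

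Two cosmetic points. $B$ is constructible rather than closed, so you should work with $\overline{B}$. For $\delta=1$, $X_1$ is empty rather than irreducible; this is harmless, since your union bound then gives $1-D_q=0$.
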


In view of this result, we refer to the first situation as the \emph{sparse case} and the second as the \emph{abundant case}.  The remaining situation is the \emph{threshold case}, namely when
$$
    (\delta-1)(m+n-\delta+1) = mn-k+1.
$$
Gruica and Ravagnani explain that in the threshold case, \cite{GR22}*{Proposition~4.4} implies the upper bound
$$
\limsup_{q\to\infty} D_q(m\times n, k, \delta) \le \frac{1}{2}.
$$
They further remark that their techniques do not determine the limit in this case and that, in general, rank-metric codes in this setting are neither sparse nor dense as $q$ grows  \cite{GR22}*{Remark~5.13}. 

\subsection{Main results}

In this paper, we adopt an algebro-geometric approach to these density questions. Our main theorem (Theorem~\ref{thm:main}) resolves the threshold case as a corollary, and also yields concise proofs for the sparse and abundant cases.

\begin{cor}
\label{cor:main}
Fix integers $m\geq n$, $2\leq\delta\leq n$, and $1\leq k\leq mn$, and set
$$
\Delta\colonequals \prod_{i=0}^{n-\delta} \frac{\binom{m+i}{\delta-1}}{\binom{\delta-1+i}{\delta-1}}.
$$
Then the asymptotic density of $k$-dimensional codes in $\bF_q^{mn}$ with minimum rank distance at least $\delta$ is given by
$$
    \lim_{q\to\infty}D_q(m\times n, k, \delta) = \begin{cases}
        0 & \text{if}\quad(\delta-1)(m+n-\delta+1) > mn-k+1, \\
        1 & \text{if}\quad(\delta-1)(m+n-\delta+1) < mn-k+1, \\
        \sum_{i=0}^{\Delta}\frac{(-1)^i}{i!}
            & \text{if}\quad(\delta-1)(m+n-\delta+1) = mn-k+1.
    \end{cases}
$$
For completeness, when $1\leq k\leq mn$, we have $D_q(m\times n,k,1)=1$, while $D_q(m\times n,k,\delta)=0$ for every $\delta>n$.
\end{cor}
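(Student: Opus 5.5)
This is a corollary of the forthcoming Theorem~\ref{thm:main} on $\bF_q$-point-free linear sections. The plan is to translate the condition $d_R(V)\geq\delta$ into a point-count condition on a linear section of a determinantal variety.

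Let $X_\delta\subseteq\bP^{mn-1}$ be the projectivized locus of $m\times n$ matrices of rank at most $\delta-1$. It is geometrically irreducible (for $2\le\delta\le n$), of dimension $(\delta-1)(m+n-\delta+1)-1$, and of degree
\[
\deg X_\delta=\prod_{i=0}^{n-\delta}\binom{m+i}{\delta-1}\Big/\binom{\delta-1+i}{\delta-1}=\Delta
\]
by the classical degree formula for determinantal varieties. A $k$-dimensional code $V$ corresponds to a uniformly random $(k-1)$-plane $\bP(V)\subseteq\bP^{mn-1}$. Moreover, $d_R(V)\geq\delta$ if and only if $\bP(V)\cap X_\delta$ has no $\bF_q$-points. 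Hence $D_q(m\times n,k,\delta)$ is exactly the proportion of $\bF_q$-rational $(k-1)$-planes $L$ with $(L\cap X_\delta)(\bF_q)=\emptyset$.

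The codimension of $L$ is $c=mn-k$, and $\dim X_\delta-c=(\delta-1)(m+n-\delta+1)-1-(mn-k)$. This quantity is positive, negative, or zero exactly in the three cases of the statement.

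\textbf{Sparse case.} If $\dim X_\delta>c$, every $L$ meets $X_\delta$ in a subvariety of positive dimension. I would show that for most $L$ the section $L\cap X_\delta$ is geometrically irreducible (Bertini over finite fields). Lang--Weil then gives it $\gg q$ points, so the density tends to $0$.

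\textbf{Abundant case.} If $\dim X_\delta<c$, I would count incident pairs. The expected number of $\bF_q$-points of $X_\delta$ on a random $L$ is $|X_\delta(\bF_q)|\cdot|L(\bF_q)|/|\bP^{mn-1}(\bF_q)|\asymp q^{\dim X_\delta-c}\to0$. By Markov's inequality, the density tends to $1$.

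\textbf{Threshold case.} When $\dim X_\delta=c$, a generic $L$ meets $X_\delta$ in $\Delta$ geometric points. Consider the incidence correspondence over the open set $U$ of the Grassmannian of $(k-1)$-planes meeting $X_\delta$ transversally. It is a finite étale cover of degree $\Delta$. The key input, and the main obstacle, is showing that its geometric monodromy group is the full symmetric group $S_\Delta$. This is where the extended notion of quasireflexivity enters. I would try to prove that $X_\delta$ is quasireflexive, so that the monodromy is doubly transitive and contains a transposition, and therefore equals $S_\Delta$. Via the uniform-position argument, the transposition should come from a simple tangency of $L$ at a smooth point of $X_\delta$.

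Granting this, the Chebotarev density theorem for varieties over finite fields shows that, as $q\to\infty$, the Frobenius classes of points of $U(\bF_q)$ equidistribute in $S_\Delta$. The complement of $U$ contributes $O(1/q)$. The density of $L$ with no $\bF_q$-points is then the proportion of derangements in $S_\Delta$, which is $\sum_{i=0}^{\Delta}(-1)^i/i!$. Here one must check that the monodromy is geometric, so there is no constant-field twist; this holds because the arithmetic and geometric groups agree, both being $S_\Delta$.

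\textbf{Edge cases.} Finally, $\delta=1$ is trivial, since every nonzero matrix has rank at least $1$, giving $D_q=1$. For $\delta>n$ no nonzero matrix has rank at least $\delta$, so $D_q=0$.
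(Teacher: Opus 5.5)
Your reduction to $\bF_q$-point-free $(k-1)$-plane sections of $X_\delta$ is fine, as are the dimension and degree bookkeeping and the edge cases. Your sparse and abundant arguments are valid alternatives to the paper's. In the sparse case you use Bertini irreducibility of general linear sections plus Lang--Weil, where the paper uses the Chebyshev bound $\sigma^2/\mu^2$; your route needs an effective Bertini theorem over $\bF_q$ with constants uniform in $q$, of the kind proved by Kmentt--Shute. In the abundant case you use a first-moment/Markov bound, where the paper uses a Lang--Weil count of the locus $\mathcal{N}$ of all planes meeting the variety. Yours is more elementary, since it needs only $|X_\delta(\bF_q)|=O(q^{\dim X_\delta})$.

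The genuine gap is in the threshold case. The value $\sum_{i=0}^{\Delta}(-1)^i/i!$ rests entirely on the monodromy being $S_\Delta$. To get your transposition you need a linear space $L$ of dimension $\operatorname{codim}X_\delta$ such that $L\cap X_\delta$ is finite and supported on exactly $\Delta-1$ matrices of rank exactly $\delta-1$, one of length two and the others reduced. You only say you ``would try to prove'' this, but it is the main new geometric input of the paper (Theorem~\ref{thm:det-var_quasireflex}), and it is not automatic. In positive characteristic, and characteristic $2$ is the case that matters most for codes, a general tangent linear section need not be simply tangent: for non-reflexive varieties the contact can be of higher order. Moreover, the general implication from reflexivity (which $X_\delta$ does satisfy in every characteristic) to quasireflexivity is only available away from characteristic $2$. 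Without such an $L$, nothing rules out a proper doubly transitive monodromy group, which would change the derangement proportion.

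The paper supplies this step as follows.
\begin{enumerate}[label=\textup{(\roman*)}]
\item For $m=n=\delta$ it writes down an explicit line on which the determinant has one double root and $n-2$ simple roots, all at matrices of rank $n-1$.
\item Otherwise, let $P_0$ be the matrix with $I_{\delta-1}$ in its upper-left corner and zeros elsewhere. The paper considers the irreducible family of linear spaces of dimension $\operatorname{codim}X_\delta$ through $P_0$ that are tangent to $X_\delta$ at $P_0$.
\item It exhibits one member meeting $X_\delta$ with multiplicity exactly two at $P_0$. The Schur complement $C-BA$ reduces the local equations to $a_{11}^2=0$ and the vanishing of the free entries of $C$. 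It then shows that this property is open in the family.
\item Using $T_PX_\delta=\{Q\mid Q(\ker P)\subseteq\operatorname{im}P\}$ and dimension counts, it shows that the members meeting the singular locus have codimension at least $2$ in the family, and those tangent at a second smooth point have codimension at least $1$.
\item A general member of the family then works, by B\'ezout.
\end{enumerate}

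Note also that double transitivity does not come from quasireflexivity, as your wording suggests. It holds in every characteristic because the fibre product of the incidence correspondence with itself over the Grassmannian, minus the diagonal, is a Grassmannian bundle over $X_\delta\times X_\delta$ minus the diagonal, hence irreducible. The simple tangency is used only to produce the transposition, via Ballico--Hefez and the smoothness of the incidence correspondence at the tangency point.
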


Our main theorem concerns the density of linear subspaces whose
intersections with a geometrically irreducible projective variety contain no $\bF_q$-rational points. We apply this result to determinantal varieties in order to study rank-metric codes. We now introduce the background necessary to state this result precisely. Let $\delta$ be an integer satisfying $2 \le \delta \le n$, and set $W\colonequals\operatorname{Mat}_{m\times n}(\bF_q)$. Inside $\bP(W)\cong\bP_{\bF_q}^{mn-1}$, let $Y_\delta$ be the projective determinantal variety cut out by the $\delta\times\delta$ minors. More concretely, we can describe the points of $Y_{\delta}$ as follows. For every field extension $K/\bF_q$, its $K$-points are
\[
    Y_\delta(K)
    =
    \left\{
        [M]\in\bP^{mn-1}(K)
        \;\middle|\;
        \rk_K(M)<\delta
    \right\}.
\]
By definition, a code $V\subseteq\bF_q^{mn}$ has $d_R(V)\geq\delta$ if and only if the linear section $\bP(V)\cap Y_\delta$ contains no $\bF_q$-point. Moreover, \cite{Har92}*{Proposition~12.2} gives \begin{align*}
\dim Y_\delta &=(mn-1)-(m-\delta+1)(n-\delta+1)\\ &=(\delta-1)(m+n-\delta+1)-1.
\end{align*}
The dimension of a variety can be characterized by its general linear sections \cite{Har92}*{Definitions~11.2--11.3}. Accordingly, for a general linear subspace $V\subseteq\overline{\bF_q}^{mn}$ of dimension~$k$,
$$
    \bP(V)\cap (Y_\delta)_{\overline{\bF_q}}\;\text{ is }\begin{cases}
        \text{positive-dimensional}
        &\text{if}\quad (\delta-1)(m+n-\delta+1) > mn-k+1, \\
        \text{empty}
        &\text{if}\quad (\delta-1)(m+n-\delta+1) < mn-k+1, \\
        \text{zero-dimensional}
        &\text{if}\quad (\delta-1)(m+n-\delta+1) = mn-k+1,
    \end{cases}
$$
which correspond precisely to the sparse, abundant, and threshold cases, respectively.

\smallskip

Determinantal varieties are examples of \emph{geometrically irreducible varieties}, meaning that they remain irreducible in the Zariski topology after extending the base field to the algebraic closure $\overline{\bF_q}$; see, for example, \cite{Har92}*{Proposition~12.2}. They also satisfy \emph{quasireflexivity}, a notion introduced for curves by Entin \cite{Ent21}*{Section~2}. More precisely, a possibly reducible curve $C\subseteq\bP^n$ of degree~$d$ is quasireflexive if, for every irreducible component $C_i$ of $C$, there exists a hyperplane tangent to $C_i$ whose set-theoretic intersection with $C$ consists of $d-1$ geometric points. Thus, the tangency is as simple as possible, while all other intersections are transverse. We extend this notion to higher dimensions, restricting attention
to geometrically irreducible varieties.

\begin{defn}
\label{def:quasireflex}
Let $K$ be a field, and let
$Y\subseteq\bP^N_K$ be a geometrically irreducible projective
variety of degree $\Delta\geq2$. We say that $Y$ is
\emph{quasireflexive} if there exists a linear subspace
$L\subseteq\bP^N_{\overline K}$ of dimension $N-\dim Y$ such that
the scheme-theoretic intersection $Z\colonequals L\cap Y_{\overline K}$ is zero-dimensional and its support consists of exactly $\Delta-1$ distinct geometric points, all lying in the smooth locus of $Y_{\overline K}$.
\end{defn}

Since $Z$ is a proper complementary-dimensional linear section,
B\'ezout's theorem gives
\[
    \sum_{y\in\operatorname{Supp}(Z)}
    \dim_{\overline K}\mathcal{O}_{Z,y}
    =\Delta.
\]
Consequently, after relabeling the points as $\operatorname{Supp}(Z) =\{y_0,y_1,\ldots,y_{\Delta-2}\}$, 
we have
\[
    \dim_{\overline K}\mathcal{O}_{Z,y_0}=2
    \qquad\text{and}\qquad
    \dim_{\overline K}\mathcal{O}_{Z,y_i}=1
    \quad\text{for }1\leq i\leq\Delta-2.
\]
Thus, exactly one intersection point has multiplicity two, and all
the remaining intersections are transverse.

Quasireflexivity enables us to apply the \emph{Chebotarev density
theorem}; see Section~\ref{sec:threshold} for further details. To make the asymptotic statements below precise, we work with
varieties obtained by base change from a fixed scheme over the
integers. Let
\[
    \mathscr{Y}\subseteq\bP^N_{\bZ}
\]
be a fixed closed projective subscheme defined by finitely many
homogeneous polynomials with integer coefficients. For each prime
power $q=p^r$, let
\[
    Y_q
    \colonequals
    \mathscr{Y}
    \times_{\operatorname{Spec}(\bZ)}
    \operatorname{Spec}(\bF_q)
    \subseteq\bP^N_{\bF_q},
\]
where the fiber product is taken with respect to the canonical map
$\operatorname{Spec}(\bF_q)\to\operatorname{Spec}(\bZ)$. Thus, the
varieties $Y_q$ are the finite-field base changes of the single fixed
scheme $\mathscr{Y}$.

We assume that, for every prime power $q$, the scheme $Y_q$ is a
geometrically irreducible projective variety, and that its dimension
$d$ and degree $\Delta$ are independent of $q$. All limits
$q\to\infty$ below are taken over prime powers. When a single prime
power $q$ is fixed, we may abbreviate $Y_q$ to $Y$.

The main result of this paper is as follows.

\begin{thm}
\label{thm:main}
Let $\mathscr{Y}\subseteq\bP^N_{\bZ}$ and its base changes $Y_q$ be
as above, so that every $Y_q$ is a geometrically irreducible
projective variety of dimension $d$ and degree $\Delta$. Let
$0\leq\ell\leq N$. For every prime power $q$, define
\[
    D_q(Y_q,\ell)
    \colonequals
    \frac{
        \left|\left\{
            L\in\Gr(\bP^\ell,\bP^N)(\bF_q)
            \;\middle|\;
            (L\cap Y_q)(\bF_q)=\emptyset
        \right\}\right|
    }{
        \left|
            \Gr(\bP^\ell,\bP^N)(\bF_q)
        \right|
    }.
\]
\begin{enumerate}[label=\textup{(\arabic*)}]
\item There exists a constant $c_1$, depending only on
$\mathscr{Y}$ and $\ell$, such that, for every prime power $q$,
\[
    0\leq D_q(Y_q,\ell)
    \leq c_1q^{N-d-\ell}.
\]
In particular, if $d+\ell>N$, then
\[
    \lim_{q\to\infty}D_q(Y_q,\ell)=0.
\]

\item There exists a constant $c_2$, depending only on
$\mathscr{Y}$ and $\ell$, such that, for every prime power $q$,
\[
    1-c_2q^{d+\ell-N}
    \leq D_q(Y_q,\ell)\leq1.
\]
In particular, if $d+\ell<N$, then
\[
    \lim_{q\to\infty}D_q(Y_q,\ell)=1.
\]

\item Suppose that $\Delta\geq2$, $d+\ell=N$, and that every
$Y_q$ is quasireflexive. Then
\[
    \lim_{q\to\infty}D_q(Y_q,\ell)
    =
    \sum_{i=0}^{\Delta}\frac{(-1)^i}{i!}.
\]
\end{enumerate}
\end{thm}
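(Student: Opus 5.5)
We prove (1) and (2) by double counting over the incidence correspondence, and (3) with the Chebotarev density theorem applied to the universal linear section. Write $G=\Gr(\bP^\ell,\bP^N)$ and consider
\[
    I\colonequals\{(y,L)\in Y_q\times G \mid y\in L\}.
\]
The projection $I\to Y_q$ is a fibration whose fibres are Grassmannians $\Gr(\bP^{\ell-1},\bP^{N-1})$. By the Lang--Weil estimates, with constants uniform in $q$ because everything is a base change of the single scheme $\mathscr{Y}$ (so the number and degrees of defining equations are bounded), we get $|Y_q(\bF_q)|=q^d+O(q^{d-1/2})$. Hence
\[
    \frac{|I(\bF_q)|}{|G(\bF_q)|}=|Y_q(\bF_q)|\cdot\frac{|\bP^\ell(\bF_q)|}{|\bP^N(\bF_q)|}\asymp q^{d+\ell-N}.
\]

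\textbf{Part (2).} The number of $L$ with $(L\cap Y_q)(\bF_q)\neq\emptyset$ is at most $|I(\bF_q)|$. This gives $1-D_q\le c_2q^{d+\ell-N}$ directly.

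\textbf{Part (1).} We use a second moment (Chebyshev) bound. Let $X(L)=|(L\cap Y_q)(\bF_q)|$. Then $\mathbb{E}[X]=\mu\asymp q^{d+\ell-N}$. For $\mathbb{E}[X^2]$ we count pairs of points $y\neq y'$ in $Y_q(\bF_q)$ together with an $L$ containing both. Since two distinct points span a line, the probability that both lie in $L$ is
\[
    \frac{|\bP^\ell(\bF_q)|\,\bigl(|\bP^\ell(\bF_q)|-1\bigr)}{|\bP^N(\bF_q)|\,\bigl(|\bP^N(\bF_q)|-1\bigr)}.
\]
This yields $\operatorname{Var}(X)\le\mu+O(\mu^2 q^{-1/2})$; the error term comes from Lang--Weil applied to $|Y_q(\bF_q)|^2$ versus $|Y_q(\bF_q)|$. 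Chebyshev then gives
\[
    D_q=\Pr(X=0)\le\frac{\operatorname{Var}(X)}{\mu^2}\le\frac{1}{\mu}+O(q^{-1/2}).
\]
This is not good enough for the full bound $c_1q^{N-d-\ell}$ when $d+\ell-N\ge1$ because of the $q^{-1/2}$ term. So instead we compute the variance exactly: $\sum_{y\ne y'}1=|Y(\bF_q)|^2-|Y(\bF_q)|$, and the pair-probability is $p_1^2(1+O(q^{-1}))$, where $p_1$ is the single-point probability. Then $\mathbb{E}[X^2]-\mu^2=\mu+O(\mu^2/q)$, which gives $D_q\le c\,(\mu^{-1}+q^{-1})$. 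When $d+\ell-N\ge 1$, the term $\mu^{-1}$ dominates up to constants, since $\mu^{-1}\asymp q^{N-d-\ell}\ge q^{-1}$ requires $d+\ell-N\le 1$. For larger excess we instead reduce to the case $d+\ell-N=1$: choose $L$ by first choosing a sub-$\bP^{\ell'}\subseteq L$ with $d+\ell'=N+1$, or intersect $Y$ with a generic hyperplane to lower $d$. The cleanest version is the second: for $d+\ell-N=e\geq1$, if $L$ misses $Y(\bF_q)$ then every sub-$\bP^{N-d+1}\subseteq L$ does too, and we average. This requires iterating the $q^{-1}$ improvement, which is the step I expect to need care.

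\textbf{Part (3).} When $d+\ell=N$, consider the open set $U\subseteq G$ over which $I\to G$ is finite étale of degree $\Delta$. This set is nonempty because a general complementary section is reduced, of $\Delta$ points. The map $I_U\to U$ has a geometric monodromy group $\Mon\subseteq S_\Delta$. The key claim is $\Mon=S_\Delta$:
\begin{itemize}
\item transitivity follows from the geometric irreducibility of $I$, which is a Grassmannian bundle over the irreducible $Y$;
\item double transitivity follows from the irreducibility of $I\times_G I\setminus\Delta_I$, again a bundle over pairs of distinct points of $Y$;
\item a transposition comes from quasireflexivity: near the special $L_0$ with one double point in the smooth locus and $\Delta-2$ simple points, a one-parameter family transverse to the branch divisor gives local monodromy swapping two sheets.
\end{itemize}
A doubly transitive group containing a transposition is $S_\Delta$. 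This step (making the local monodromy argument rigorous, including in characteristic $2$, and ensuring uniformity in $q$) is the main obstacle.

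The Chebotarev density theorem over finite fields, with errors $O(q^{-1/2})$ uniform in $q$ by constructibility over $\bZ$, then says that Frobenius classes of $L\in U(\bF_q)$ are equidistributed in $S_\Delta$; the arithmetic and geometric monodromy agree since both are full $S_\Delta$. Now $(L\cap Y)(\bF_q)=\emptyset$ exactly when the Frobenius permutation is fixed-point free. The complement $G\setminus U$ has codimension $\ge1$, so contributes $O(q^{-1})$. Hence $D_q$ tends to the proportion of derangements in $S_\Delta$, namely $\sum_{i=0}^{\Delta}(-1)^i/i!$.
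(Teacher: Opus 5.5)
The gap is in part (1) when $e\colonequals d+\ell-N\ge 2$. Your estimate $D_q\le c(\mu^{-1}+q^{-1})$ gives only $O(q^{-1})$ there, and the proposed reduction cannot improve it. If $L$ misses $Y_q(\bF_q)$, then so does a random sub-plane $L'\subseteq L$ of dimension $N-d+1$. Averaging this only yields $D_q(Y_q,\ell)\le D_q(Y_q,N-d+1)=O(q^{-1})$. Cutting $Y$ by a hyperplane likewise lowers the excess and gives a weaker bound, not a stronger one. There is also nothing to ``iterate'': the events for different sub-planes of a fixed $L$ are not independent, so no product bound is available.

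The missing observation is that your off-diagonal error term has a sign. Write $a=|\bP^\ell(\bF_q)|$, $b=|\bP^N(\bF_q)|$, $p_1=a/b$ and $p_2=a(a-1)/(b(b-1))$. Then $p_2/p_1^2=(1-a^{-1})/(1-b^{-1})\le 1$, so exactly
\[
\operatorname{Var}(X)=\mu-|Y_q(\bF_q)|\,p_2+|Y_q(\bF_q)|^2\,(p_2-p_1^2)\le\mu .
\]
No Lang--Weil enters this computation, and your $O(\mu^2q^{-1/2})$ and $O(\mu^2/q)$ terms are in fact nonpositive. Chebyshev then gives $D_q\le 1/\mu$. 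The uniform Lang--Weil lower bound $|Y_q(\bF_q)|\ge q^d-Cq^{d-1/2}$ gives $\mu\ge c\,q^{d+\ell-N}$ for large $q$. Hence $D_q\le c_1q^{N-d-\ell}$ for every excess at once, with small $q$ absorbed into $c_1$. This is precisely the variance bound $\sigma^2=O(q^{\ell-N}|Y(\bF_q)|)$ from Poonen--Slavov and Kmentt--Shute that the paper quotes. As written, your argument proves the limit $D_q\to 0$ but not the stated rate $c_1q^{N-d-\ell}$ when $e\ge 2$.

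Parts (2) and (3) are sound. For (3) you follow the paper's route: double transitivity from irreducibility of the fibre square minus the diagonal, a transposition from the quasireflexive section, then Entin's Chebotarev theorem. The transposition step you flag is handled in the paper by citing Ballico--Hefez, Proposition~3 (restated as Entin, Proposition~3.1). That result applies in every characteristic, because $I$ is smooth at $(y_0,L)$, being a Grassmannian bundle over $Y$ with $y_0$ smooth, and the Grassmannian is smooth.

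For (2), your union bound
\[
1-D_q\le\frac{|I(\bF_q)|}{|\Gr(\bP^\ell,\bP^N)(\bF_q)|}=|Y_q(\bF_q)|\,\frac{|\bP^\ell(\bF_q)|}{|\bP^N(\bF_q)|}
\]
is a correct and more elementary substitute for the paper's argument. It needs only $|Y_q(\bF_q)|=O(q^d)$. The paper instead applies a bounded-complexity Lang--Weil estimate to the image $\mathcal N=\pi_{\Gr}(\mathcal X)$, which requires its irreducibility and dimension bound.
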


For the application to rank-metric codes, fix $m$, $n$, and $\delta$,
and let
\[
    \mathscr{Y}_{\delta}
    \subseteq\bP_{\bZ}^{mn-1}
\]
be the projective determinantal scheme cut out by the
$\delta\times\delta$ minors of the generic $m\times n$ matrix. These minors have integer coefficients, so this defines a projective
scheme over $\bZ$. For every prime power $q$, its base change
\[
    Y_{\delta,q}
    \colonequals
    \mathscr{Y}_{\delta}
    \times_{\operatorname{Spec}(\bZ)}
    \operatorname{Spec}(\bF_q)
    \subseteq\bP_{\bF_q}^{mn-1}
\]
is the determinantal variety of $m\times n$ matrices over $\bF_q$ of
rank less than $\delta$.

The dimension and degree of these fibers are independent of $q$, and
we prove in Theorem~\ref{thm:det-var_quasireflex} that they are
quasireflexive in every characteristic. We therefore apply
Theorem~\ref{thm:main} to $\mathscr{Y}_{\delta}$ with
\[
    N=mn-1,
    \qquad
    \ell=k-1.
\]
The degree formula for determinantal varieties
(see \cite{Ful98}*{Example~14.4.11}) gives
\[
    \Delta
    =\deg(Y_{\delta,q})
    =
    \prod_{i=0}^{n-\delta}
    \frac{
        \binom{m+i}{\delta-1}
    }{
        \binom{\delta-1+i}{\delta-1}
    }.
\]
Moreover,
\[
    D_q(m\times n,k,\delta)
    =
    D_q(Y_{\delta,q},k-1).
\]
Thus, Theorem~\ref{thm:main} directly yields
Corollary~\ref{cor:main}.

\subsection{Related work}

We now discuss how our method relates to previous approaches to density questions for rank-metric codes. Byrne and Ravagnani developed a combinatorial approach based on families of codes that are balanced with respect to a given partition of the ambient space \cite{BR20}. For MRD codes with $n = \delta = 2$, Antrobus and Gluesing-Luerssen utilized enumerative results on square matrices over $\bF_q$ with no eigenvalue in $\bF_q$ \cite{AGL19}. Gluesing-Luerssen deduced an exact formula for $D_q(3\times 3,3,3)$ via the connection between square full-rank MRD codes and the theory of semifields \cite{GL20}*{Theorem~2.4}. This connection was further developed by Gruica, Ravagnani, Sheekey, and Zullo in \cite{GRSZ23}*{Section~3}.

Gruica and Ravagnani \cite{GR22} study density questions by counting
common complements of collections of linear subspaces. As explained in
\cite{GKR23}*{Remark~6.6~(i)}, every matrix
$M\in\bF_q^{mn}$ with $\operatorname{rank}(M)<\delta$ has row space
contained in some $(\delta-1)$-dimensional subspace
$U\subseteq\bF_q^n$. For each such subspace, define
\[
    \bF_q^{mn}(U)
    \colonequals
    \left\{
        M\in\operatorname{Mat}_{m\times n}(\bF_q)
        \;\middle|\;
        \operatorname{rowsp}(M)\subseteq U
    \right\}.
\]
This is a linear subspace of $\bF_q^{mn}$ of dimension
$m(\delta-1)$. A code $C\subseteq\bF_q^{mn}$ of dimension
$m(n-\delta+1)$ has minimum rank distance at least $\delta$ if and
only if $C\cap\bF_q^{mn}(U)=\{0\}$ 
for every $(\delta-1)$-dimensional subspace $U\subseteq\bF_q^n$.
Since
\[
    \dim C+\dim\bF_q^{mn}(U)
    =m(n-\delta+1)+m(\delta-1)
    =mn,
\]
this condition is equivalent to
\[
    \bF_q^{mn}
    =C\oplus\bF_q^{mn}(U)
\]
for every such $U$. Thus, the MRD codes of dimension
$m(n-\delta+1)$ and minimum rank distance $\delta$ correspond to common complements of the spaces $\bF_q^{mn}(U)$.
The problem of estimating the number of such common complements can
then be turned into one of counting isolated vertices in a certain
bipartite graph. This approach fits into the study of the Critical
Problem for combinatorial geometries. For a more extensive discussion
of this connection, see \cite{GRSZ23}*{Section~5}.

In some ways, our geometric approach using linear sections of determinantal varieties is similar to the common-complement approach described above. The key distinction is that, rather than viewing the set of $\bF_q$-matrices of rank at most $\delta-1$ as a union of finitely many linear subspaces, we work with the associated irreducible projective variety, exploiting more of the
geometry inherent in the problem.

The previous work most closely related to our approach concerns a
different kind of limit. In \cite{NHTRR18}, Neri,
Horlemann-Trautmann, Randrianarisoa, and Rosenthal study the
proportion of $\bF_{q^s}$-linear MRD codes as the field extension
degree $s$ tends to infinity. Let
$C\subseteq\bF_{q^s}^n$ be an $\bF_{q^s}$-linear MRD code of
dimension $k$. After elementary row operations on a generator matrix
and a permutation of the $n$ coordinates, such a code admits a
generator matrix in \emph{systematic form}
\[
    G=(I_k\mid X).
\]
Building on a general criterion for generator matrices of MRD codes
\cite{NHTRR18}*{Proposition~13}, the authors show that the
$k\times(n-k)$ matrices $X$ yielding an MRD code form a Zariski open
subset of the affine space of all such matrices
\cite{NHTRR18}*{Theorem~17}. Whereas their parameter space is an
affine space of matrices, our parameter space is the Grassmannian of
$k$-dimensional subspaces of the ambient matrix space, allowing us to
exploit its intrinsic projective geometry.

\medskip

We end the introduction with several directions not pursued in this paper:

\vspace{-3pt}
\subsubsection*{Rate of convergence}

Corollary~\ref{cor:main} computes the limit of $D_q(m\times n, k, \delta)$ as $q \to \infty$, but it does not address the rate of convergence. As an illustrative example, in the case where $n=m=k=\delta=3$, our arguments yield an upper bound of the form $O(q^{-1})$ for the density, whereas Gluesing-Luerssen, using connections to the theory of semifields, obtained an exact formula that is $O(q^{-3})$ \cite{GL20}*{Theorem~2.4}. Although Theorem~\ref{thm:main} provides some information about the rate of convergence, we have not attempted to optimize it here.

\vspace{-3pt}
\subsubsection*{Field of linearity}

After fixing a basis of $\bF_{q^m}$ over $\bF_q$, each element of $\bF_{q^m}$ can be identified with a vector in $\bF_q^m$, and hence each element of $\bF_{q^m}^n$ with an $m \times n$ matrix over $\bF_q$. Thus, if $\ell$ divides $m$, an $\bF_{q^\ell}$-linear subspace of $\bF_{q^m}^n$ corresponds to an $\bF_q$-linear subspace of the space of $m \times n$ matrices. Note that every $\bF_{q^\ell}$-linear code is $\bF_q$-linear, whereas the converse does not hold in general. This issue is discussed explicitly in \cite{GHRW24}*{Remark~5.2}.

There has been considerable interest in density questions for such $\bF_{q^\ell}$-linear rank-metric codes. In \cite{GHRW24}, Gruica, Horlemann, Ravagnani, and Willenborg investigate these questions in a broader setting. In particular, they consider an analogue of $D_q(m\times n, k, \delta)$ with an additional parameter specifying that only $\bF_{q^\ell}$-linear subspaces are counted. They also study analogous density questions for codes endowed with other metrics, including the sum-rank metric.

\vspace{-3pt}
\subsubsection*{Other limits of interest}

In \cite{NHTRR18}, Neri, Horlemann-Trautmann, Randrianarisoa, and Rosenthal focus on the $\bF_{q^m}$-linear rank-metric codes described above and show that $\bF_{q^m}$-linear MRD codes are dense as $m \to \infty$. This result was generalized by Gruica, Horlemann, Ravagnani, and Willenborg, who proved that $\bF_{q^\ell}$-linear MRD codes in $\bF_{q^{\ell s}}^n$ are dense as $\ell \to \infty$ \cite{GHRW24}*{Theorem~5.8}.

Our geometric arguments, by contrast, require the full set of $k$-dimensional $\bF_q$-linear subspaces of the space of $m \times n$ matrices over $\bF_q$, rather than only those that are linear over some extension field $\bF_{q^\ell}$. We also require the dimension of the matrix space to remain fixed as $q \to \infty$. Consequently, our results do not address other limits of interest, such as the limit of $D_q(m\times n, k, \delta)$ as $m \to \infty$.

\subsection*{Organization of the paper}
The first two cases of Theorem~\ref{thm:main} are proved in Section~\ref{sec:sparse-abd}, while the remaining threshold case is treated in Section~\ref{sec:threshold}. In Section~\ref{sec:quasiref}, we verify that determinantal varieties are quasireflexive. In Appendix~\ref{sec:reflexive-det}, we discuss the closely related notion of reflexivity, and prove that determinantal varieties satisfy this property in arbitrary characteristic.

\subsection*{Acknowledgments}
The third author is supported by NSF grant DMS 2154223.  He thanks Tianhao Wang for many helpful conversations related to this project. The fourth author is supported by the NSTC Research Grant 113-2115-M-029-003-MY3 and partially supported by the Academic Summit Program 114-2639-M-002-009-ASP. 

\subsection*{AI disclosure}
The main results and the overall proof strategy were developed by the authors, who completed a draft of the paper before using any AI tools. ChatGPT (models 5.5 Thinking and 5.6 Sol) was subsequently used during the revision process. It assisted in reviewing mathematical arguments, identifying errors in the draft, suggesting corrections and proof strategies, and improving the clarity and presentation of the manuscript. For example, the proofs of Lemmas~\ref{lem:double-not-triple} and \ref{lem:multiplicity-2-open} are based on suggestions from ChatGPT. The authors carefully reviewed all AI-generated suggestions, independently verified the mathematical arguments and references, and made suitable adjustments to the final wording. The authors take full responsibility for the content of the paper.

\section{Asymptotic density in the sparse and abundant cases}
\label{sec:sparse-abd}

We prove the sparse and abundant cases of Theorem~\ref{thm:main} in this section. For each prime power $q$, we write $Y=Y_q$. All implied constants below are uniform in $q$. Indeed, the fibers $Y_q$, the associated incidence correspondences, and their projection images have complexity bounded independently of $q$, as they arise from the fixed equations defining $\mathscr Y\subseteq\bP^N_{\bZ}$. We use point-counting estimates in their bounded-complexity form; see \cite{Ent21}*{Section~4} and Remark~\ref{rmk:complexity} for more details.

\subsection{The sparse case}

For a projective variety $Y\subseteq\bP^N_{\bF_q}$, choose
$L\in\Gr(\bP^\ell,\bP^N)(\bF_q)$ uniformly at random and define
$$
\xymatrix@R=0pt{ \Theta\colon\Gr(\bP^\ell,\bP^N)(\bF_q)\ar[r]
& \bZ_{\geq 0} \\
L\ar@{|->}[r] & |(L\cap Y)(\bF_q)|.
} 
$$
We view $\Theta$ as a random variable. As shown for hyperplanes in \cite{PS22}*{Lemma~4.1} and for higher-codimensional linear sections in \cite{KS22}*{Lemma~2.1}, the mean $\mu$ and variance $\sigma^2$ of $\Theta$ satisfy
$$
\mu
=|Y(\bF_q)|\left( q^{\ell-N}
+O\left(q^{\ell-N-1}\right) \right),
\qquad \sigma^2
=O\left(q^{\ell-N}|Y(\bF_q)|\right).
$$
If $Y$ is geometrically irreducible, the Lang--Weil bound \cite{LW54}*{Theorem~1} gives
$$
    |Y(\bF_q)| = q^{\dim Y}
        + O\left(
            q^{\dim Y - \frac{1}{2}}
        \right).
$$
Substituting this into the preceding estimates, we obtain \begin{equation}
\label{eqn:mean-var}
\mu
=q^{\dim Y+\ell-N}
+O\left(
q^{\dim Y+\ell-N-\frac12}
\right), \qquad
\sigma^2
=O\left(q^{\dim Y+\ell-N}\right). \end{equation}

\begin{proof}[Proof of Theorem~\ref{thm:main}~(1)]
The inequality $D_q(Y, \ell)\geq 0$ holds by definition, so it remains to show that there exists a constant $c_1$, depending only on the fixed scheme $\mathscr{Y}$ and on $\ell$ such that
$$
   D_q(Y, \ell) \leq c_1q^{N - \dim Y - \ell}.
$$
Consider the set of pointless linear sections
$$
    \mathcal{U}_q\colonequals\left\{
        L\in\Gr(\bP^\ell,\bP^N)(\bF_q)
    \;\middle|\;
        (L\cap Y)(\bF_q) = \emptyset
    \right\}.
$$
For all sufficiently large $q$, Equation~\eqref{eqn:mean-var} gives
$\mu>0$. Since $\Theta=0$ on $\mathcal{U}_q$, Chebyshev's inequality
then gives
\begin{align*}
    D_q(Y,\ell)
    =\operatorname{Prob}(\Theta=0)
    &\leq\operatorname{Prob}(|\Theta-\mu|\geq\mu) \\
    &\leq\frac{\sigma^2}{\mu^2}
    = O\left(q^{N-\dim Y-\ell}\right).
\end{align*}
If $\sigma=0$, then $\Theta\equiv\mu>0$, so $\mathcal{U}_q$ is empty and the bound holds trivially. Increasing the constant to cover the finitely many smaller prime powers establishes the desired upper bound for every $q$. In particular, the condition $\dim Y + \ell > N$ implies $\lim_{q\to\infty}q^{N - \dim Y - \ell} = 0$, which further implies $\lim_{q\to\infty}D_q(Y, \ell) = 0$.
\end{proof}

\subsection{The abundant case}

Consider a geometrically irreducible variety $Y\subseteq\bP^N$ defined over~$\bF_q$, together with the incidence correspondence
$$
    \mathcal{X}\colonequals\left\{
        (y, L)\in Y\times \Gr(\bP^{\ell}, \bP^{N})
    \;\middle|\;
        y\in L
    \right\}.
$$
There are two natural projection maps
$$\xymatrix{
    & \mathcal{X}\ar[dl]_-{\pi_Y}\ar[dr]^-{\pi_{\Gr}} & \\
    Y && \Gr(\bP^{\ell}, \bP^{N}),
}$$
where the image of the second projection is
$$
    \mathcal{N}\colonequals\pi_{\Gr}(\mathcal{X}) = \left\{
        L\in\Gr(\bP^{\ell}, \bP^{N})
    \;\middle|\;
        L\cap Y\neq\emptyset
    \right\}.
$$
To clarify the (geometric) points of $ \mathcal{N}$, observe that
\begin{equation}\label{eq:description-of-N}
\mathcal{N}(\overline{\bF_q})=\left\{
L\in \Gr(\bP^\ell,\bP^N)(\overline{\bF_q}) \;\middle|\; L\cap Y_{\overline{\bF_q}}\neq\emptyset \right\}.
\end{equation}

\begin{lemma}
\label{lem:dim_incidence}
The incidence correspondence $\mathcal{X}$ is geometrically irreducible with
$$
    \dim\mathcal{X} = \dim\Gr(\bP^\ell,\bP^N) - (N - \dim Y - \ell).
$$
\end{lemma}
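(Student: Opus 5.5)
The plan is to study the first projection $\pi_Y\colon\mathcal{X}\to Y$ and show that it is a Grassmannian bundle. Then irreducibility and the dimension count both follow from the standard fibration argument. All statements are geometric, so we work over $\overline{\bF_q}$ throughout and write $Y$ for $Y_{\overline{\bF_q}}$.

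\emph{Fibers.} For a point $y\in Y$, the fiber $\pi_Y^{-1}(y)$ is the set of $\ell$-planes $L\subseteq\bP^N$ passing through $y$. Choose a vector $v$ spanning the line $y\subseteq \overline{\bF_q}^{N+1}$. An $(\ell+1)$-dimensional subspace containing $v$ is the same as an $\ell$-dimensional subspace of the quotient $\overline{\bF_q}^{N+1}/\langle v\rangle$. Hence the fiber is isomorphic to $\Gr(\ell,N)$, and
$$
\dim\pi_Y^{-1}(y)=\ell(N-\ell)=(\ell+1)(N-\ell)-(N-\ell)=\dim\Gr(\bP^\ell,\bP^N)-(N-\ell).
$$
The case $\ell=0$ is fine: the fiber is the single point $\{y\}$. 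Every fiber is therefore irreducible of the same dimension.

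\emph{Local triviality.} Next we show that $\mathcal{X}\to Y$ is Zariski-locally trivial, or at least flat, proper, and surjective with irreducible equidimensional fibers. To see this, note that $\mathcal{X}$ is the restriction to $Y$ of the universal flag variety
$$
F=\{(y,L)\in\bP^N\times\Gr(\bP^\ell,\bP^N)\mid y\in L\}.
$$
The variety $F$ is the relative Grassmannian $\Gr(\ell, T)$, where $T=\mathcal{O}^{N+1}/\mathcal{O}(-1)$ is a rank-$N$ vector bundle on $\bP^N$. Since $T$ is locally trivial, over each open set $U\subseteq\bP^N$ trivializing $T$ we get $F|_U\cong U\times\Gr(\ell,N)$. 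Pulling back along $Y\hookrightarrow\bP^N$ gives $\mathcal{X}|_{U\cap Y}\cong (U\cap Y)\times\Gr(\ell,N)$.

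\emph{Conclusion.} Each product $(U\cap Y)\times\Gr(\ell,N)$ is irreducible, being a product of irreducible varieties over an algebraically closed field. Any two such nonempty open pieces intersect, since $Y$ is irreducible and so any two nonempty opens $U\cap Y$ and $U'\cap Y$ meet. It follows that $\mathcal{X}$ is irreducible. The dimension is
$$
\dim\mathcal{X}=\dim Y+\ell(N-\ell)=\dim\Gr(\bP^\ell,\bP^N)-(N-\dim Y-\ell).
$$
This is the claimed formula.

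The main point needing care is the irreducibility step. It requires either the explicit local trivialization above, or the general criterion that a proper surjective map onto an irreducible base with irreducible fibers of constant dimension has irreducible source (for example \cite{Har92}*{Theorem~11.14}). I plan to use the local trivialization, because it avoids any hypotheses on the fibration beyond what is built into the construction.
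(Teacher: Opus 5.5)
Your proposal is correct and follows essentially the same route as the paper: fiber the incidence correspondence over $Y$ via $\pi_Y$, identify each fiber with $\Gr(\bP^{\ell-1},\bP^{N-1})$, and add the fiber dimension to $\dim Y$. The only difference is that you deduce irreducibility from an explicit Zariski-local trivialization, realizing the incidence as the relative Grassmannian of $\mathcal{O}^{N+1}/\mathcal{O}(-1)$ restricted to $Y$. The paper instead cites the general criterion \cite{Sha13}*{Theorem~1.26} for surjective maps to an irreducible base with irreducible fibers of constant dimension.
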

\begin{proof}
If $\ell=0$, then $\Gr(\bP^0,\bP^N)\cong\bP^N$, and $\mathcal{X}$ is the graph of the inclusion $Y\hookrightarrow\bP^N$. Since $\mathcal{X}\cong Y$, the assertion follows. For the remainder of the proof, we assume that $\ell\geq1$.

For any $y\in Y$, the fiber $\pi_Y^{-1}(y)$ consists of $\ell$-dimensional linear subspaces of $\bP^{N}$ containing $y$. Projecting from $y$ identifies such subspaces with $(\ell-1)$-dimensional linear subspaces of $\bP^{N-1}$, yielding an isomorphism
$$
    \pi_Y^{-1}(y)\cong\Gr(\bP^{\ell-1}, \bP^{N-1}).
$$
Hence, the fibers of $\pi_Y$ are geometrically irreducible and have the same dimension. Since~$Y$ is geometrically irreducible, it follows that $\mathcal{X}$ is geometrically irreducible by \cite{Sha13}*{Theorem~1.26}.

For the dimension, we compute
\begin{align*}
    \dim\mathcal{X} = \dim \pi_Y^{-1}(y) + \dim Y
    &= \dim\Gr(\bP^{\ell-1}, \bP^{N-1}) + \dim Y \\
    &= \ell(N-\ell)+\dim Y \\
    &= \ell(N-\ell)+(N-\ell)+\dim Y-(N-\ell) \\
    &= (\ell+1)(N-\ell)+\dim Y-(N-\ell) \\
    &= \dim \Gr(\bP^{\ell}, \bP^{N})-(N-\dim Y-\ell).
\end{align*}
This proves the statement.
\end{proof}

\begin{lemma}
\label{lem:img-in-Gr}
The projection image $\mathcal{N} = \pi_{\Gr}(\mathcal{X})$ is geometrically irreducible with
$$
    \dim\mathcal{N}\leq\dim\Gr(\bP^\ell,\bP^N) - (N - \dim Y - \ell). 
$$
\end{lemma}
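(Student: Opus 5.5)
The plan is to deduce both assertions from Lemma~\ref{lem:dim_incidence} using two standard facts: irreducibility passes to images, and dimension cannot increase under a morphism. Both statements are geometric, so we first base change to $\overline{\bF_q}$. Since formation of the scheme-theoretic image of a proper morphism commutes with flat base change, $\mathcal{N}_{\overline{\bF_q}}$ is the image of $\mathcal{X}_{\overline{\bF_q}}$ under $(\pi_{\Gr})_{\overline{\bF_q}}$. This also matches the description of geometric points in \eqref{eq:description-of-N}.

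For closedness, note that $\pi_{\Gr}$ is the restriction of the projection $Y\times\Gr(\bP^\ell,\bP^N)\to\Gr(\bP^\ell,\bP^N)$ to the closed subscheme $\mathcal{X}$. Because $Y$ is projective, this projection is proper, so $\pi_{\Gr}$ is a proper morphism. Hence $\mathcal{N}=\pi_{\Gr}(\mathcal{X})$ is a closed subset of the Grassmannian, which we give the reduced induced subscheme structure.

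For irreducibility, Lemma~\ref{lem:dim_incidence} says $\mathcal{X}_{\overline{\bF_q}}$ is irreducible. The continuous image of an irreducible topological space is irreducible, so $\mathcal{N}_{\overline{\bF_q}}=\pi_{\Gr}(\mathcal{X}_{\overline{\bF_q}})$ is irreducible. Therefore $\mathcal{N}$ is geometrically irreducible.

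For the dimension bound, consider the dominant morphism of irreducible varieties $\mathcal{X}_{\overline{\bF_q}}\to\mathcal{N}_{\overline{\bF_q}}$. The theorem on fiber dimension (e.g.\ \cite{Sha13}*{Theorem~1.25}) gives $\dim\mathcal{N}\le\dim\mathcal{X}$. Equivalently, the function field of $\mathcal{N}$ embeds into that of $\mathcal{X}$, so its transcendence degree is at most that of $\mathcal{X}$. Combining this with the formula $\dim\mathcal{X}=\dim\Gr(\bP^\ell,\bP^N)-(N-\dim Y-\ell)$ from Lemma~\ref{lem:dim_incidence} yields the stated inequality.

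The only delicate point is making sure ``image'' is meant as a closed subvariety defined over $\bF_q$, so that geometric irreducibility makes sense. Properness settles this, since the image is closed and Galois-stable. The remaining steps are formal. Equality need not hold, because the generic fiber of $\pi_{\Gr}$ may be positive-dimensional when $\dim Y+\ell>N$; the lemma only claims the inequality.
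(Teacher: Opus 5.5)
Your proposal is correct and follows the same route as the paper: geometric irreducibility of $\mathcal{N}$ is inherited from that of $\mathcal{X}$ (Lemma~\ref{lem:dim_incidence}) by taking the image, and the dimension bound comes from $\dim\mathcal{N}\leq\dim\mathcal{X}$. Your extra details make explicit what the paper leaves implicit: properness of $\pi_{\Gr}$, which makes $\mathcal{N}$ closed, and the compatibility of the image with base change to $\overline{\bF_q}$.
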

\begin{proof}
The variety $\mathcal{N}$ is geometrically irreducible since $\mathcal{X}$ is geometrically irreducible by Lemma~\ref{lem:dim_incidence}. The upper bound on the dimension follows from the inequality
$
    \dim\mathcal{N}\leq\dim\mathcal{X}
$
together with Lemma~\ref{lem:dim_incidence}.
\end{proof}

\begin{proof}[Proof of Theorem~\ref{thm:main}~(2)]

By definition,
$$
    1-D_q(Y,\ell)
    =
    \frac{|\left\{
          L\in\Gr(\bP^\ell,\bP^N)(\bF_q)
      \;\middle|\;
          (L\cap Y)(\bF_q)\neq\emptyset
      \right\}|}
    {|\Gr(\bP^\ell,\bP^N)(\bF_q)|}.
$$
In view of \eqref{eq:description-of-N}, observe that $\left\{ L\in\Gr(\bP^\ell,\bP^N)(\bF_q) \;\middle|\; (L\cap Y)(\bF_q)\neq\emptyset \right\}$ is a subset of $\mathcal{N}(\bF_q)$. This inclusion may be strict, since an $\bF_q$-rational linear subspace can meet $Y$ geometrically without meeting $Y(\bF_q)$. In any event,
\begin{equation}
\label{eqn:03}
    1-D_q(Y,\ell)
    \leq
    \frac{|\mathcal{N}(\bF_q)|}
    {|\Gr(\bP^\ell,\bP^N)(\bF_q)|}.
\end{equation}

The variety $\mathcal{N}$ is geometrically irreducible by Lemma~\ref{lem:img-in-Gr}. By the bounded-complexity form of the Lang--Weil estimate \cite{CGH23}*{Theorem~4.6}, there is a constant $A$, independent of $q$, such that
$$
    \left|
        |\mathcal{N}(\bF_q)| - q^{\dim \mathcal{N}}
    \right|
    \leq A q^{\dim \mathcal{N}-1/2}.
$$
From this inequality, one can check that for $q\geq\max\{A^2, 1\}$,
\begin{equation}
\label{eqn:points-on-N}
    |\mathcal{N}(\bF_q)|\leq  q^{\dim \mathcal{N}}+Aq^{\dim \mathcal{N}-1/2}\leq 2q^{\dim \mathcal{N}}. 
\end{equation}

Next, we observe that 
\begin{equation}\label{eqn:gras-lower-bound}
|\Gr(\bP^\ell,\bP^N)(\bF_q)| = \prod_{j=0}^{\ell} \frac{q^{N+1-j}-1}{q^{\ell+1-j}-1}\geq q^{(\ell+1)(N-\ell)} = q^{\dim\Gr(\bP^\ell,\bP^N)}.
\end{equation}
Indeed, each factor is at least $q^{N-\ell}$, since $(q^{N+1-j}-1)-q^{N-\ell}(q^{\ell+1-j}-1)= q^{N-\ell}-1 \geq 0$.

Applying Lemma~\ref{lem:img-in-Gr} in combination with \eqref{eqn:03}, \eqref{eqn:points-on-N}, and \eqref{eqn:gras-lower-bound}, we obtain the following estimate for all sufficiently large $q$:
$$
    1 - D_q(Y, \ell)
    \leq\frac{
        2q^{\dim \Gr(\bP^{\ell}, \bP^{N})-(N-\dim Y-\ell)}
    }{
        q^{\dim \Gr(\bP^{\ell}, \bP^{N})}
    }
    = 2q^{\dim Y + \ell - N}.
$$
Increasing the constant to cover the finitely many smaller prime powers, we obtain a constant $c_2$ depending only on $\mathscr Y$ and $\ell$ such that, for every $q$,
$$
    1 - D_q(Y, \ell)\leq c_2q^{\dim Y + \ell - N}.
$$
Rearranging the terms gives
$$
    1 - c_2q^{\dim Y+\ell-N} \leq D_q(Y, \ell)\leq 1.
$$
If $\dim Y + \ell < N$, then $\lim_{q\to\infty}(1 - c_2q^{\dim Y+\ell-N}) = 1$, and thus $\lim_{q\to\infty} D_q(Y, \ell) = 1$.
\end{proof}

\section{Asymptotic density in the threshold case}
\label{sec:threshold}

Consider a geometrically irreducible variety $Y\subseteq\bP^N$ of degree~$\Delta$ over $\bF_q$, together with the incidence correspondence
$$\xymatrix{
    & \mathcal{X}\colonequals\left\{
        (y, L)\in Y\times \Gr(\bP^{\ell}, \bP^{N})
    \;\middle|\;
        y\in L
    \right\} \ar[dl]_-{\pi_Y}\ar[dr]^-{\pi_{\Gr}} & \\
    Y && \mathcal{G}\colonequals\Gr(\bP^{\ell}, \bP^{N}).
}$$
Assume that $\dim Y + \ell = N$. Then a general $\ell$-plane $L\in\mathcal{G}$ intersects $Y$ transversely in $\Delta$ points by Bertini's theorem. In particular, the map
$$
    \pi_{\Gr}\colon \mathcal{X}\longrightarrow\mathcal{G}
$$
is generically finite of degree~$\Delta$. Let $\mathcal G^\circ\subseteq\mathcal G$ be the locus of
$\ell$-planes whose intersection with $Y$ consists of $\Delta$
distinct geometric points, all smooth on $Y$. Observe that $\mathcal G^\circ$ is a nonempty Zariski open subset over which $\pi_{\Gr}$ is finite \'etale of degree~$\Delta$. In this setting, Theorem~\ref{thm:main}~(3) concerns the distribution of $\bF_q$-points $L\in\mathcal{G}(\bF_q)$ for which the fiber $\pi_{\Gr}^{-1}(L)=L\cap Y$ contains no $\bF_q$-points. 

\subsection{Chebotarev density theorem}

Before proceeding to the proof, we briefly review the background for the key ingredient. The main source for the following material is \cite{Ent21}*{Sections~3 and 4}. Let $\mathcal{G}^\circ$ be a geometrically irreducible quasi-projective variety, and let
$$
    f\colon\mathcal{X}^\circ\longrightarrow\mathcal{G}^\circ
$$
be a finite \'etale morphism of degree~$\Delta$. For a geometric point $L_0\in\mathcal{G}^\circ(\overline{\bF_q})$, the \'etale fundamental group $\pi^{\mathrm{et}}(\mathcal{G}^\circ, L_0)$ acts on the fiber $f^{-1}(L_0)$, giving rise to a homomorphism to the symmetric group~$S_\Delta$. The \emph{arithmetic monodromy group} $\Mon(\mathcal{X}^\circ/\mathcal{G}^\circ)$ is defined as the image of this homomorphism. This group is well defined up to conjugation since different choices of the base point $L_0$ yield conjugate subgroups of $S_\Delta$.

On the other hand, the \emph{geometric monodromy group} is defined by
$$
    \Mon^g(\mathcal{X}^\circ/\mathcal{G}^\circ)
    \colonequals
    \Mon\left(
        \mathcal{X}^\circ\times_{\bF_q}\overline{\bF_q}
        \big/
        \mathcal{G}^\circ\times_{\bF_q}\overline{\bF_q}
    \right).
$$
The two monodromy groups fit into a short exact sequence
$$
1\longrightarrow\Mon^g(\mathcal{X}^\circ/\mathcal{G}^\circ)
    \longrightarrow\Mon(\mathcal{X}^\circ/\mathcal{G}^\circ)
    \longrightarrow\Gal(\bF_{q^\nu}/\bF_q)
    \longrightarrow 1,
$$
where $\nu$ is the minimal positive integer such that
$$
    \Mon^g(\mathcal{X}^\circ/\mathcal{G}^\circ)
    = \Mon\left(
        \mathcal{X}^\circ\times_{\bF_q}\bF_{q^\nu}
        \big/
        \mathcal{G}^\circ\times_{\bF_q}\bF_{q^\nu}
    \right).
$$
In particular, the arithmetic and geometric monodromy groups may not coincide, even when $q$ is large. In our application, we will prove directly that the geometric monodromy group is $S_\Delta$. Since the arithmetic monodromy group is a subgroup of $S_\Delta$ containing the geometric monodromy group, it will follow that both groups are equal to $S_\Delta$.

For each point $L\in\mathcal{G}^\circ(\bF_q)$, the fiber $f^{-1}(L)\subseteq\mathcal{X}^\circ$ is a $0$-dimensional scheme over $\bF_q$ consisting of $\Delta$ geometric points. The action of the Frobenius map on this fiber determines a conjugacy class
$$
    \Frob_q(L)\subseteq\Mon(\mathcal{X}^\circ/\mathcal{G}^\circ).
$$
Note that $f^{-1}(L)$ contains no $\bF_q$-points if and only if the class $\Frob_q(L)$ consists of derangements. This idea reduces our problem to estimating the density of points $L\in\mathcal{G}^\circ(\bF_q)$ for which $\Frob_q(L)$ consists of derangements.

The key tool for our argument is the following version of the Chebotarev density theorem proved by Entin \cite{Ent21}. We state only the form relevant to our setting, namely the case in which the arithmetic and geometric monodromy groups coincide.

\begin{thm}[\cite{Ent21}*{Theorem~3}]
\label{thm:entin}
Let $f\colon\mathcal{X}^\circ\to\mathcal{G}^\circ$ be a finite \'etale morphism between quasiprojective varieties over $\bF_q$ with $\mathcal{G}^\circ$ smooth and geometrically irreducible. Suppose that 
$$
\Mon^g(\mathcal{X}^\circ/\mathcal{G}^\circ)
    = \Mon(\mathcal{X}^\circ/\mathcal{G}^\circ).
$$
Then for each conjugacy class $\mathcal{C}\subseteq\Mon(\mathcal{X}^\circ/\mathcal{G}^\circ)$, we have
$$
    \left|
        \left\{
            L\in\mathcal{G}^\circ(\bF_q)
            \;\middle|\;
            \Frob_q(L) = \mathcal{C}
        \right\}
    \right|
    = \frac{
        |\mathcal{C}|
    }{
        |\Mon(\mathcal{X}^\circ/\mathcal{G}^\circ)|
    } q^{\dim\mathcal{G}^\circ}\left(
        1 + O\left(q^{-1/2}\right)
    \right).
$$
\end{thm}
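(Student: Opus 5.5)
The plan is the classical reduction of Chebotarev to Lang--Weil via the Galois closure and Frobenius twists. Write $G\colonequals\Mon(\mathcal{X}^\circ/\mathcal{G}^\circ)\subseteq S_\Delta$. First I will construct the Galois closure $P\to\mathcal{G}^\circ$ of $f$. I take a connected component of the complement of all diagonals in the $\Delta$-fold fiber power $\mathcal{X}^\circ\times_{\mathcal{G}^\circ}\cdots\times_{\mathcal{G}^\circ}\mathcal{X}^\circ$. This is a connected finite \'etale Galois cover with group $G$ acting on the right. The hypothesis $\Mon^g=\Mon$ says that the image of $\pi^{\mathrm{et}}(\mathcal{G}^\circ_{\overline{\bF_q}})$ is already all of $G$. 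Hence $P_{\overline{\bF_q}}$ remains connected. Since $P$ is \'etale over the smooth variety $\mathcal{G}^\circ$, it is smooth, and therefore $P$ is geometrically irreducible of dimension $\dim\mathcal{G}^\circ$.

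Next I will translate the Frobenius condition into point counts. For $L\in\mathcal{G}^\circ(\bF_q)$, the fiber $P_L$ is a $G$-torsor over $\overline{\bF_q}$ on which $\Frob_q$ acts commuting with $G$. Choosing $x\in P_L$ gives $\Frob_q(x)=x\cdot\sigma_x$ for a unique $\sigma_x\in G$. As $x$ varies over the fiber, $\sigma_x$ runs over the class $\Frob_q(L)$, and each $\sigma$ in that class is hit by exactly $|C_G(\sigma)|=|G|/|\mathcal{C}|$ points. Fix $\sigma\in\mathcal{C}$ and let $P_\sigma$ be the twist of $P$ by $\sigma$. This is the $\bF_q$-form of $P_{\overline{\bF_q}}$ whose Frobenius is $x\mapsto\Frob_q(x)\cdot\sigma^{-1}$, obtained by Galois descent from the cocycle determined by $\sigma$. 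Its $\bF_q$-points are exactly the $x\in P(\overline{\bF_q})$ lying over $\mathcal{G}^\circ(\bF_q)$ with $\sigma_x=\sigma$. The counting above then gives
\[
\left|\left\{L\in\mathcal{G}^\circ(\bF_q)\;\middle|\;\Frob_q(L)=\mathcal{C}\right\}\right|=\frac{|\mathcal{C}|}{|G|}\,|P_\sigma(\bF_q)|.
\]

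Finally, $P_\sigma$ becomes isomorphic to $P$ over $\overline{\bF_q}$, so it is a geometrically irreducible variety of dimension $\dim\mathcal{G}^\circ$. The Lang--Weil bound then gives $|P_\sigma(\bF_q)|=q^{\dim\mathcal{G}^\circ}(1+O(q^{-1/2}))$, which yields the stated formula.

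The main obstacle I expect is in two places. The first is rigorously constructing the twist $P_\sigma$ as a variety over $\bF_q$. I will do this by effective descent for the finite \'etale cover, or, more concretely, as the fiber product of $P$ with the graph of $\sigma\circ\Frob$. The second is controlling the implied constant uniformly. This requires a bounded-complexity version of Lang--Weil, as in \cite{CGH23}*{Theorem~4.6}, applied to $P_\sigma$. For that I need to check that the complexity of $P_\sigma$ is bounded in terms of that of $f$, which follows since $P$ is cut out inside a bounded fiber power and twisting does not change the geometric complexity.
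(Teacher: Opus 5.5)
The paper contains no proof of this statement. It imports the result from \cite{Ent21}*{Theorem~3} and delegates the dependence of the implied constant on the complexity of $f$ to \cite{Ent21}*{Section~4} (Remark~\ref{rmk:complexity}). Your proposal is the standard Galois-closure, Frobenius-twist and Lang--Weil proof of Chebotarev for varieties over finite fields, and its algebraic core is correct:
\begin{itemize}
\item a connected component $P$ of the fiber power minus the diagonals is Galois with group (a conjugate of) $G$;
\item the hypothesis $\Mon^g=\Mon$ is exactly what makes $P_{\overline{\bF_q}}$ connected, hence irreducible since it is smooth;
\item the relation $\sigma_{xg}=g^{-1}\sigma_xg$ gives the fiber count $|C_G(\sigma)|=|G|/|\mathcal{C}|$;
\item since $P\to\mathcal{G}^\circ$ is finite, descent produces a quasi-projective twist $P_\sigma$, geometrically isomorphic to $P$, whose $\bF_q$-points number $|C_G(\sigma)|$ times the count you want.
\end{itemize}
Compared with citing Entin, your route shows exactly where $\Mon^g=\Mon$ enters: it makes every twist $P_\sigma$ geometrically irreducible, so every class receives the same main term.

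The step you assert rather than prove is the uniformity of the constant. That uniformity is the real content of $O(q^{-1/2})$ here, since $q$ varies. ``Twisting does not change the geometric complexity'' is not enough as stated. The bounded-complexity Lang--Weil estimate \cite{CGH23}*{Theorem~4.6} takes as input an $\bF_q$-model in projective space with controlled equations, and $P_\sigma$, obtained by descent, comes with no such model. Also, the fiber product of $P$ with the graph of $\sigma\circ\Frob$ does not construct $P_\sigma$; it is the finite fixed-point set you are trying to count.

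The cleanest repair works with that set directly. Apply the Grothendieck--Lefschetz trace formula and Deligne's weight bounds to $P_\sigma$. Its compactly supported $\ell$-adic cohomology over $\overline{\bF_q}$ is that of $P_{\overline{\bF_q}}$. By geometric irreducibility, $H^{2d}_c$ is one-dimensional of weight $2d$, and all other groups have weight at most $2d-1$. Hence $\bigl||P_\sigma(\bF_q)|-q^d\bigr|\leq B\,q^{d-1/2}$, where $B$ is the total Betti number of $P_{\overline{\bF_q}}$. Since $P$ is open and closed in the fiber power, Katz's bound on sums of Betti numbers controls $B$ in terms of the complexity of $f$ alone.

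Alternatively, realize $P_\sigma$ as a twisted diagonal inside $\operatorname{Res}_{\bF_{q^r}/\bF_q}P_{\bF_{q^r}}$, where $r=\operatorname{ord}(\sigma)\leq\Delta!$. Because $\sigma$ permutes the factors of the fiber power, this is cut out by linear conditions and gives an explicit model of bounded complexity.

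For the paper's only use of the theorem you can avoid twists altogether. Let $N_L=|f^{-1}(L)(\bF_q)|$, and let $\mathcal{X}^{(i)}$ be the $i$-fold fiber power minus diagonals, with $\mathcal{X}^{(0)}=\mathcal{G}^\circ$. When $\Mon^g=S_\Delta$, each $\mathcal{X}^{(i)}$ with $i\leq\Delta$ is geometrically irreducible. Inclusion--exclusion then gives $|\{L\mid N_L=0\}|=\sum_{i=0}^{\Delta}\frac{(-1)^i}{i!}|\mathcal{X}^{(i)}(\bF_q)|$, so Lemma~\ref{lem:derangements} follows from Lang--Weil on explicit varieties of bounded complexity.
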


\begin{rmk}\label{rmk:complexity}
The implied constant in $O\left(q^{-1/2}\right)$ depends on the \emph{complexity} of the morphism~$f$. Roughly speaking, this complexity measures the ambient projective dimension and the degrees of the equations and nonvanishing conditions defining the graph of $f$; see \cite{Ent21}*{Section~4}. In our setting, $\mathscr Y$ is defined by fixed equations over
$\bZ$, and the incidence and transversality conditions have bounded
complexity as $q$ varies. The same holds for their images, the map
$f$, and the complement $\mathcal D=\mathcal G\setminus\mathcal G^\circ$.
It follows that the constants in the point-counting estimates and
in the Chebotarev theorem depend only on $\mathscr Y$ and $\ell$.

\end{rmk}

\subsection{Proof for the threshold case}

We retain the notation introduced at the beginning of this section and set $\mathcal X^\circ\colonequals\pi_{\Gr}^{-1}(\mathcal G^\circ)$. The restriction
$$
f\colonequals\pi_{\Gr}|_{\mathcal{X}^\circ}\colon\mathcal{X}^\circ\longrightarrow\mathcal{G}^\circ
$$
is therefore a finite \'etale morphism of degree~$\Delta$.

\begin{lemma}
\label{lem:full-monodromy-threshold}
If the variety $Y$ is quasireflexive, then
$\Mon^g(\mathcal X^\circ/\mathcal G^\circ)=\Mon(\mathcal X^\circ/\mathcal G^\circ) \cong S_\Delta$.
\end{lemma}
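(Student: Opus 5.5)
The strategy is the classical one for curves and hyperplane sections, following Entin: we show that the geometric monodromy group is a doubly transitive subgroup of $S_\Delta$ that contains a transposition, which forces it to be all of $S_\Delta$. Since $\Mon^g\subseteq\Mon\subseteq S_\Delta$, the arithmetic monodromy group is then also $S_\Delta$ and the two coincide. All work is done over $\overline{\bF_q}$, so we replace $Y$ by $Y_{\overline{\bF_q}}$ throughout.

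\emph{Transitivity.} The geometric monodromy group is transitive on a fiber exactly when $\mathcal X^\circ_{\overline{\bF_q}}$ is irreducible. This holds because $\mathcal X^\circ$ is a nonempty open subset of $\mathcal X$, and $\mathcal X$ is geometrically irreducible by Lemma~\ref{lem:dim_incidence}.

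\emph{Double transitivity.} We need the stabilizer of a point to act transitively on the remaining $\Delta-1$ points. Equivalently, the fiber product $\mathcal X^\circ\times_{\mathcal G^\circ}\mathcal X^\circ$ minus the diagonal should be irreducible. We parametrize it by pairs of distinct points $(y_1,y_2)\in Y\times Y$ together with $\ell$-planes containing both. The fiber over $(y_1,y_2)$ is the Grassmannian of $\ell$-planes through the line $\overline{y_1y_2}$, which is $\Gr(\bP^{\ell-2},\bP^{N-2})$ and is irreducible of constant dimension; here $\ell\ge1$, and $\ell\ge 2$ is not needed if one handles small cases directly. Since $(Y\times Y)\setminus\Delta_Y$ is irreducible, \cite{Sha13}*{Theorem~1.26} shows that this incidence variety is irreducible. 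Its open part over $\mathcal G^\circ$ is exactly the off-diagonal fiber product, which is therefore irreducible.

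\emph{A transposition.} This is where quasireflexivity enters. Let $L_1$ be the $\ell$-plane from Definition~\ref{def:quasireflex}, meeting $Y$ in a length-$2$ point $y_0$ and reduced points $y_1,\dots,y_{\Delta-2}$, all in the smooth locus. Choose a general smooth curve $T\subseteq\mathcal G$ through $L_1$, for instance a line in a suitable chart, and restrict $\mathcal X\to\mathcal G$ to it.
\begin{itemize}
\item Near $L_1$, the map $\pi_{\Gr}$ is étale at the points $(y_i,L_1)$ for $i\ge1$, since those intersections are transverse at smooth points. So these $\Delta-2$ sheets stay unramified over $T$.
\item Near $(y_0,L_1)$, the map is finite of local degree $2$. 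Because $Z$ has length $2$ at $y_0$ with $y_0$ smooth on $Y$, the local branch is locally $t=s^2$ for a general direction $T$. Equivalently, $\mathcal X$ is smooth at $(y_0,L_1)$, since $y_0$ is smooth on $Y$ and $\mathcal X\to Y$ is a Grassmannian bundle, and the ramification there is simple.
\item Hence the local monodromy (inertia) around the branch divisor near $L_1$ swaps two sheets and fixes the other $\Delta-2$, giving a transposition in $\Mon^g$.
\end{itemize}
In characteristic $2$, a ramification of the form $t=s^2$ is wild or inseparable, and this needs care. I would instead argue through tame inertia at a generic point of the branch divisor $\mathcal D$: its inertia group fixes $\Delta-2$ points and moves two, so the only possibility is a transposition or trivial, and it is nontrivial because the cover is ramified there. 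That requires the ramification to be genuinely nontrivial and of order $2$, which follows from the length-$2$ structure. Alternatively, one can use a lemma like Lemma~\ref{lem:multiplicity-2-open} to show that the double-point locus is open in the discriminant, and choose $L_1$ generic in it.

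\emph{Conclusion and main obstacle.} A doubly transitive permutation group containing a transposition is the full symmetric group, so $\Mon^g=S_\Delta=\Mon$. The main obstacle is the transposition step. One has to make rigorous that the inertia at a generic point of the branch divisor near $L_1$ is exactly a transposition, uniformly in the characteristic. That involves checking that $\mathcal X$ is smooth at $(y_0,L_1)$ and that a length-$2$ fiber point gives simple branching, for example via a $2$-dimensional local computation or a purity and Abhyankar-type argument.
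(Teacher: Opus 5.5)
Your outline matches the paper's. Double transitivity comes from realizing $(\mathcal X\times_{\mathcal G}\mathcal X)\setminus D$ as a Grassmannian bundle over $(Y\times Y)\setminus D'$, restricting to the nonempty open part over $\mathcal G^\circ$ (nonempty because $\Delta\geq 2$), and applying \cite{BH86}*{Proposition~2~(ii)}. The conclusion is ``doubly transitive plus a transposition gives $S_\Delta$'', followed by the sandwich $\Mon^g\subseteq\Mon\subseteq S_\Delta$. The genuine gap is the transposition step, which you leave open as the ``main obstacle''. Your primary sketch of it, ``locally $t=s^2$ along a general curve'', is not valid in characteristic $2$: a separable degree-$2$ extension of $\overline{\bF_q}((t))$ is never of that form. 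The paper does not prove this step by hand. It records exactly the facts you list: $\mathcal G$ is smooth, $\pi_{\Gr}$ is finite near $L_1$ and \'etale at the $\Delta-2$ transverse points, and $\mathcal X$ is smooth, hence formally irreducible, at $(y_0,L_1)$ because it is a Grassmannian bundle over $Y$. It then cites \cite{BH86}*{Proposition~3}, restated as \cite{Ent21}*{Proposition~3.1}, which gives a transposition in $\Mon^g$ in every characteristic. With that citation your argument is complete.

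If you prefer to avoid the citation, your curve argument can be repaired by tracking only orbit structure. Set $x_0=(y_0,L_1)$. Since $\pi_{\Gr}^{-1}(L_1)=L_1\cap Y$ has length $2$ at $x_0$, the kernel of $d\pi_{\Gr}$ at $x_0$ is one-dimensional, so its image is a hyperplane $H\subseteq T_{L_1}\mathcal G$. Choose a smooth curve $T\ni L_1$ that meets $\mathcal G^\circ$ and satisfies $T_{L_1}T\not\subseteq H$. Then $\pi_{\Gr}^{-1}(T)$ is a smooth curve germ at $x_0$. Over $\overline{\bF_q}((t))$ the cover therefore splits into one totally ramified degree-$2$ factor and $\Delta-2$ trivial factors. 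It is separable because the generic point of $T$ lies in $\mathcal G^\circ$. The inertia group at $L_1$ lies in the monodromy of $\pi_{\Gr}^{-1}(T\cap\mathcal G^\circ)\to T\cap\mathcal G^\circ$, which is a subgroup of $\Mon^g$. It has one orbit of size $2$ and fixes the other $\Delta-2$ points, so its image in $S_\Delta$ is generated by a transposition; whether the ramification is tame or wild is irrelevant.

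Your branch-divisor variant also needs correcting as stated. The inertia at a generic point of an arbitrary component of $\mathcal D$ need not be a transposition; for example, consider the locus of planes meeting $\operatorname{Sing}Y$. You must use a component of the branch locus passing through $L_1$, which exists by purity because $\mathcal X$ is regular at $x_0$. That same regularity is what prevents the length-$2$ point from splitting into two unramified sheets nearby.
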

\begin{proof}
We first work over $\overline{\bF_q}$. Quasireflexivity implies
$\Delta\geq 2$ and $\ell\geq1$. Fix a geometric point
$L_0\in\mathcal{G}^\circ(\overline{\bF_q})$ and identify $\Mon^{g}(\mathcal{X}^\circ/\mathcal{G}^\circ)$ as a subgroup of $S_\Delta$ through its action on the geometric fiber $f^{-1}(L_0)$. Let
$$
    D\subseteq
    \mathcal{X}\times_\mathcal{G}\mathcal{X}
    \qquad\text{and}\qquad
    D'\subseteq Y\times Y
$$
denote the diagonals. The fiber of the morphism
$$
    \mathcal{X}\times_\mathcal{G}\mathcal{X}
    \setminus D
    \longrightarrow
    Y\times Y\setminus D'
$$
over $(y_1, y_2)\in Y\times Y\setminus D'$ consists of the points $((y_1, L),\, (y_2, L))$, where $L\in\mathcal{G} = \Gr(\bP^\ell, \bP^N)$ contains the line spanned by $y_1$ and $y_2$. This realizes $\mathcal{X}\times_\mathcal{G}\mathcal{X}\setminus D$ as a Grassmannian bundle over the irreducible variety $Y\times Y\setminus D'$, so it is itself irreducible. Its restriction
$$
    (\mathcal X^\circ\times_{\mathcal G^\circ}\mathcal X^\circ)
    \setminus D
$$
is an open subset, nonempty since $\Delta\geq 2$, and hence irreducible. It follows that
$\Mon^g(\mathcal{X}^\circ/\mathcal{G}^\circ)$ acts doubly transitively
on $f^{-1}(L_0)$ by \cite{BH86}*{Proposition~2~(ii)}.

Quasireflexivity gives $L\in\mathcal G$ whose intersection with $Y$ consists of one double point $y_0$ and $\Delta-2$ transverse points, all smooth on $Y$. The projection map $\pi_{\Gr}$ is finite near $L$ and \'etale at the transverse points. Since $\mathcal X\to Y$ is a Grassmannian bundle, $\mathcal X$ is smooth, hence formally irreducible, at $(y_0,L)$; the base $\mathcal G$ is also smooth. By \cite{BH86}*{Proposition~3}, as restated in
\cite{Ent21}*{Proposition~3.1}, the geometric monodromy group contains a transposition in every characteristic. Double transitivity now gives $\Mon^g(\mathcal X^\circ/\mathcal G^\circ)=S_\Delta$. Finally, the chain $$S_\Delta=\Mon^g(\mathcal{X}^\circ/\mathcal{G}^\circ) \subseteq \Mon(\mathcal{X}^\circ/\mathcal{G}^\circ) \subseteq S_\Delta$$
shows that equality must hold throughout, as desired.
\end{proof}

\begin{lemma}
\label{lem:derangements}
If the variety $Y$ is quasireflexive, then, as $q\to\infty$,
$$
    \left|\left\{
        L\in\mathcal{G}^\circ(\bF_q)
    \;\middle|\;
        {\Frob_q}\textup{ deranges }
        \pi_{\Gr}^{-1}(L)
    \right\}\right| = \left(
        \sum_{i=0}^{\Delta} \frac{(-1)^{i}}{i!}
    \right)
    q^{\dim\mathcal{G}}\left(
        1 + O\left(q^{-1/2}\right)
    \right).
$$
\end{lemma}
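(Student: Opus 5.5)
The plan is to combine Lemma~\ref{lem:full-monodromy-threshold} with Entin's Chebotarev theorem (Theorem~\ref{thm:entin}) and then sum over the conjugacy classes of derangements in $S_\Delta$.

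First I will check the hypotheses of Theorem~\ref{thm:entin}. The base $\mathcal{G}^\circ$ is a nonempty Zariski open subset of the Grassmannian $\mathcal{G}$, so it is smooth, geometrically irreducible, and quasiprojective. The map $f$ is finite \'etale of degree~$\Delta$. By Lemma~\ref{lem:full-monodromy-threshold}, quasireflexivity gives $\Mon^g=\Mon\cong S_\Delta$. Hence Theorem~\ref{thm:entin} applies. Remark~\ref{rmk:complexity} ensures that the implied constants are uniform in $q$.

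Next I will translate the derangement condition into group theory. For $L\in\mathcal{G}^\circ(\bF_q)$, the Frobenius class $\Frob_q(L)\subseteq S_\Delta$ is a conjugacy class, that is, a cycle type. Whether an element fixes a point depends only on its cycle type. So the set of $L$ for which Frobenius deranges $\pi_{\Gr}^{-1}(L)$ is the disjoint union, over the conjugacy classes $\mathcal{C}$ of fixed-point-free permutations, of the sets $\{L\mid \Frob_q(L)=\mathcal{C}\}$. There are finitely many such classes, and the number depends only on $\Delta$. Summing the asymptotics from Theorem~\ref{thm:entin} therefore gives
\[
\frac{\#\{\text{derangements in } S_\Delta\}}{\Delta!}\,q^{\dim\mathcal{G}^\circ}\bigl(1+O(q^{-1/2})\bigr).
\]
The error terms add up to a single $O(q^{-1/2})$ relative error because there are finitely many classes, all of the same main-term order. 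We also have $\dim\mathcal{G}^\circ=\dim\mathcal{G}$, since $\mathcal{G}^\circ$ is open and dense in $\mathcal{G}$.

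Finally, I will use the classical inclusion--exclusion count of derangements. Let $A_i$ be the set of permutations fixing $i$. Then the number of derangements is
\[
\Delta!-\Bigl|\bigcup_i A_i\Bigr|=\sum_{i=0}^{\Delta}(-1)^i\binom{\Delta}{i}(\Delta-i)!=\Delta!\sum_{i=0}^{\Delta}\frac{(-1)^i}{i!}.
\]
Dividing by $\Delta!$ gives exactly the constant in the statement. This constant is positive for $\Delta\geq2$, so writing the error multiplicatively is legitimate.

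I do not expect a real obstacle here, since the monodromy computation was already done in Lemma~\ref{lem:full-monodromy-threshold}. The only delicate point is bookkeeping. The statement counts only points of $\mathcal{G}^\circ$, and the exponent $\dim\mathcal{G}$ must match $\dim\mathcal{G}^\circ$. Also, $\Frob_q(L)$ being a whole conjugacy class of derangements is the same as Frobenius acting without fixed points on the geometric fiber, which is the same as the fiber having no $\bF_q$-point.
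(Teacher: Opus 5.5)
Your proposal is correct and follows essentially the same route as the paper: Lemma~\ref{lem:full-monodromy-threshold} gives $\Mon^g=\Mon\cong S_\Delta$, Theorem~\ref{thm:entin} is summed over the conjugacy classes of derangements, and the proportion of derangements in $S_\Delta$ is $\sum_{i=0}^{\Delta}(-1)^i/i!$. Your extra checks are sound but are left implicit in the paper's proof: that $\mathcal{G}^\circ$ is smooth and geometrically irreducible, that $\dim\mathcal{G}^\circ=\dim\mathcal{G}$, the inclusion--exclusion count, and that the constant is positive for $\Delta\geq 2$, so the error can be written multiplicatively.
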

\begin{proof}
By Lemma~\ref{lem:full-monodromy-threshold}, the geometric and arithmetic monodromy groups are both the symmetric group $S_\Delta$. Applying Theorem~\ref{thm:entin}, we obtain
\begin{align*}
    \left|\left\{
        L\in\mathcal{G}^\circ(\bF_q)
    \;\middle|\;
        {\Frob_q}\textup{ deranges }
        \pi_{\Gr}^{-1}(L)
    \right\}\right|
    &= \sum_{\substack{
        \mathcal{C}\text{ consists of} \\
        \text{derangements}
    }}\frac{
        |\mathcal{C}|
    }{
        |S_{\Delta}|
    }
    q^{\dim\mathcal{G}}\left(
        1 + O\left(q^{-1/2}\right)
    \right) \\
    =& \frac{
        \left|\{\text{derangements in }S_{\Delta}\}\right|
    }{
        |S_{\Delta}|
    }
    q^{\dim\mathcal{G}}\left(
        1 + O\left(q^{-1/2}\right)
    \right) \\
    =& \left(
        \sum_{i=0}^{\Delta}\frac{(-1)^{i}}{i!}
    \right)
    q^{\dim\mathcal{G}}\left(
        1 + O\left(q^{-1/2}\right)
    \right),
\end{align*}
as claimed.
\end{proof}

\begin{proof}[Proof of Theorem~\ref{thm:main}~(3)]
By definition,
$$
    D_q(Y,\ell)
    = \frac{
        \left|
            \left\{
                L\in\mathcal{G}(\bF_q)
            \;\middle|\;
                (L\cap Y)(\bF_q)=\emptyset
            \right\}
        \right|
    }{
        |\mathcal{G}(\bF_q)|
    }.
$$
The condition $(L\cap Y)(\bF_q)=\emptyset$ holds if and only if the Frobenius endomorphism fixes no point on the fiber $\pi_{\Gr}^{-1}(L)$. Therefore, over the open subset $\mathcal{G}^\circ\subseteq\mathcal{G}$, Lemma~\ref{lem:derangements} yields
$$
    \left|
        \left\{
            L\in\mathcal{G}^\circ(\bF_q)
        \;\middle|\;
            (L\cap Y)(\bF_q)=\emptyset
        \right\}
    \right|
    =
    \left(
        \sum_{i=0}^{\Delta}\frac{(-1)^i}{i!}
    \right)
    q^{\dim\mathcal{G}}
    \left(
        1+O\left(q^{-1/2}\right)
    \right).
$$
The complement $\mathcal{D}\colonequals \mathcal{G}\setminus \mathcal{G}^\circ$ is a proper closed subset, so $\dim\mathcal{D}<\dim\mathcal{G}$. Thus, by the Lang--Weil bound \cite{LW54}*{Lemma~1},
$$
    |\mathcal{D}(\bF_q)|
    \leq O\left(q^{\dim\mathcal{G}-1}\right).
$$
Comparing the two estimates, the contribution from $\mathcal{D}(\bF_q)$ is negligible. On the other hand, the denominator satisfies
$
    |\mathcal{G}(\bF_q)| =
    q^{\dim\mathcal{G}}\left(1+O\left(q^{-1}\right)\right).
$
Therefore,
$$
    D_q(Y,\ell)
    = \sum_{i=0}^{\Delta}\frac{(-1)^i}{i!}
    + O\left(q^{-1/2}\right).
$$
In particular,
$$
    \lim_{q\to\infty}D_q(Y,\ell)
    = \sum_{i=0}^{\Delta}\frac{(-1)^i}{i!}.
$$
This completes the proof.
\end{proof}

\section{Quasireflexivity of determinantal varieties}
\label{sec:quasiref}

In this section, we prove that determinantal varieties are quasireflexive over fields of arbitrary characteristic. Throughout, we fix the following notation. Let $K$ be an algebraically closed field and fix integers 
$$
2\leq\delta\leq n\leq m.
$$
We consider the determinantal variety
$$
Y_\delta\subseteq\bP^N \qquad \text{where} \qquad N = mn - 1
$$
consisting of $m\times n$ matrices of rank less than~$\delta$. To simplify the notation, we denote its dimension and codimension by
\begin{align*}
    d &\colonequals \dim Y_\delta
    = N - (m - \delta + 1)(n - \delta + 1), \\
    c &\colonequals \operatorname{codim} Y_\delta
    = (m - \delta + 1)(n - \delta + 1).
\end{align*}
By Definition~\ref{def:quasireflex}, to show that $Y_\delta$ is quasireflexive, we need to find a $c$-dimensional linear subspace $L\subseteq\bP^N$ that intersects $Y_\delta$ in $\deg(Y_\delta) - 1$ points all lying in the smooth locus
$$
    Y_\delta^{\rm sm}\subseteq Y_\delta.
$$
We begin by treating the hypersurface case.

\begin{prop}
\label{prop:det-hyper}
If $Y_\delta$ is a hypersurface, that is, if $m = n = \delta$, then it is quasireflexive.
\end{prop}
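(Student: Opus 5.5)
We treat the case $m=n=\delta$. Here $Y_\delta=\{\det M=0\}\subseteq\bP^{n^2-1}$ is a hypersurface of degree $\Delta=n$ and codimension $c=1$. We therefore need a line $L\subseteq\bP^{n^2-1}$ with the following properties: $L\cap Y_\delta$ is zero-dimensional, it is supported at exactly $n-1$ points, and every point of the support lies in the smooth locus. For a determinantal variety the smooth locus consists of matrices of rank exactly $\delta-1=n-1$. Since $n\ge 2$, the degree is at least $2$, as the definition requires.

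We choose $L$ spanned by two matrices $A$ and $B$, so that its points are $[sA+tB]$ and its intersection with $Y_\delta$ is cut out by the binary form $F(s,t)=\det(sA+tB)$ of degree $n$. Take $A=I_n$ and $B=\operatorname{diag}(\lambda_1,\dots,\lambda_n)$ with $\lambda_1=\lambda_2$ and $\lambda_2,\dots,\lambda_n$ pairwise distinct elements of $K$; this is possible because $K$ is algebraically closed, hence infinite. Then
\[
F(s,t)=\prod_{i=1}^n (s+\lambda_i t)=(s+\lambda_1 t)^2\prod_{i=3}^n (s+\lambda_i t).
\]
This form is not identically zero, since $F(1,0)=1$, so $L\not\subseteq Y_\delta$. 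The scheme-theoretic intersection $L\cap Y_\delta$ is the subscheme of $L\cong\bP^1$ defined by $F$. It is supported at the $n-1$ distinct points $[\lambda_i t : t]$ with $t=1$, i.e. the points $[I\cdot(-\lambda_i)+B]$ for $i=2,\dots,n$. The multiplicities are $2,1,\dots,1$, which sum to $n=\Delta$, consistent with Bézout.

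The main point to check is smoothness at the double point. At $s=-\lambda_1$, $t=1$, the matrix is $\operatorname{diag}(0,0,\lambda_3-\lambda_1,\dots,\lambda_n-\lambda_1)$, which has rank $n-2$ and so is \emph{singular} on $Y_\delta$. This naive choice therefore fails. We repair it with a Jordan block: take $B=J\oplus\operatorname{diag}(\lambda_3,\dots,\lambda_n)$, where $J=\begin{pmatrix}\lambda_1&1\\0&\lambda_1\end{pmatrix}$ and $\lambda_1,\lambda_3,\dots,\lambda_n$ are pairwise distinct. The determinant $F(s,t)$ is unchanged, because the matrix is upper triangular with the same diagonal. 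So the support is still $n-1$ points, with multiplicity $2$ at $\lambda_1$ and $1$ elsewhere. Now $B-\lambda_1 I$ has rank $n-1$: the Jordan block contributes rank $1$ and the remaining diagonal entries are nonzero. Each $B-\lambda_i I$ with $i\ge 3$ also has rank $n-1$. Hence every support point has rank exactly $n-1$.

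Finally, we verify that rank-$(n-1)$ points are smooth on $\{\det=0\}$, uniformly in all characteristics. The gradient of $\det$ at $M$ is the adjugate matrix $\operatorname{adj}(M)$. When $\rk M=n-1$, some $(n-1)$-minor is nonzero, so $\operatorname{adj}(M)\ne0$, and $M$ is a smooth point of the hypersurface. This completes the verification of Definition~\ref{def:quasireflex}.
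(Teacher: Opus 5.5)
Your proof is correct and is essentially the paper's argument. Both use a pencil of upper-triangular matrices whose determinant has one double linear factor and $n-2$ simple ones, with a nonzero $(1,2)$-entry (your Jordan block, the paper's $a_2s+b_2t$) keeping the rank at $n-1$ at the double root; your $sI+tB$ is the paper's $\varphi$ with the linear forms normalized. One cosmetic slip: the roots of $s+\lambda_i t$ are $[-\lambda_i:1]$, not $[\lambda_i t:t]$, although you then correctly identify the corresponding matrices as $B-\lambda_i I$.
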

\begin{proof}
In this setting, $Y_\delta\subseteq\bP^{N}=\mathbb{P}^{n^2-1}$ is a hypersurface of degree~$n$. It suffices to construct a line $L\subseteq\bP^N$ meeting the smooth locus $Y_\delta^{\rm sm}$ in exactly $n-1$ distinct points.

Choose $n-1$ distinct points $[a_i:b_i]\in\bP^1$, where $i = 1,\dots,n-1$. When $n=2$, choose in addition a point $[a_2:b_2]\neq[a_1:b_1]$. Consider the morphism
$$
    \varphi\colon\bP^1\longrightarrow\bP^N,
    \quad\varphi(s:t) = \begin{pmatrix}
        a_1 s + b_1 t & a_2 s + b_2 t & 0 & \cdots & 0 \\ 
        0 & a_1 s + b_1 t & 0 & \cdots & 0 \\ 
        0 & 0 & a_2 s + b_2 t & \cdots & 0 \\ 
        \vdots & \vdots & \vdots & \ddots & \vdots \\ 
        0 & 0 & 0 & \cdots & a_{n-1} s+ b_{n-1} t \\ 
    \end{pmatrix}.
$$
When $n=2$, this matrix is interpreted as
$$
\begin{pmatrix}
        a_1s + b_1t & a_2s + b_2t \\
        0 & a_1s+b_1t
\end{pmatrix}.
$$
Let $L\colonequals\varphi(\bP^1)$, which is a line in $\bP^N$. The scheme-theoretic intersection $L\cap Y_\delta$ is defined by the equation $\det(\varphi(s:t))=0$, namely
$$
(a_1 s + b_1 t)^2 (a_2 s + b_2 t) \cdots (a_{n-1}s + b_{n-1}t) = 0.
$$
When $n=2$, this equation reduces to $(a_1s+b_1t)^2=0$. The determinant equation above has a double root at $[b_1:-a_1]$ and $n-2$ simple roots at $[b_i:-a_i]$ for $2\leq i\leq n-1$.

For the matrix $\varphi(b_1:-a_1)$, the first two
diagonal entries vanish, while the $(1,2)$-entry equals
$a_2b_1 - a_1b_2\neq 0$, and all remaining diagonal
entries are nonzero. As the matrix has rank $n-1$, it represents a point in the smooth locus $Y_\delta^{\rm sm}$. At each remaining root $[b_i:-a_i]$, where $i\geq2$, exactly
one diagonal entry vanishes, while the leading $2\times2$ block
is invertible. Thus the matrix again has rank $n-1$, so every
remaining intersection point also lies in $Y_\delta^{\rm sm}$. Therefore, $L$ meets $Y_\delta^{\rm sm}$ in $n-1$ distinct points. This proves that $Y_\delta$ is quasireflexive.
\end{proof}

\subsection{Preliminary lemmas on tangent spaces}

The remainder of this section is devoted to the non-hypersurface case. Here, we collect properties of the tangent spaces of a determinantal variety that are needed for our proof.

For any point $P \in Y_\delta^{\rm sm}$, we write its embedded tangent space as
$
    T_P Y_\delta\subseteq\bP^N.
$
Recall that, as shown for example in \cite{Har92}*{Example~14.16},
$$
    T_PY_\delta = \left\{
        Q\in\bP^N
    \;\middle|\;
       Q(\ker P)
       \subseteq
       \operatorname{im}P
    \right\}.
$$
We will make use of the distinguished smooth point
$$
    P_0 =
    \begin{pmatrix}
        I_{\delta - 1} & 0 \\
        0 & 0
    \end{pmatrix}
    \in Y_\delta^{\rm sm},
$$
where $I_{\delta - 1}$ denotes the $(\delta - 1) \times (\delta - 1)$ identity matrix. Its tangent space $T_{P_0}Y_\delta$ consists of points represented by matrices of the form
\begin{equation}
\label{eqn:repre_TP0Y}
    \begin{pmatrix}
        \ast_{\delta-1,\,\delta-1}
        & \ast_{\delta-1,\,n-\delta+1} \\
        \ast_{m-\delta+1,\,\delta-1}
        & 0
    \end{pmatrix}
\end{equation}
where $\ast_{\alpha,\, \beta}$ denotes an arbitrary $\alpha\times\beta$ matrix.

We begin by analyzing the structure of the linear section $Y_\delta^{\rm sm}\cap T_{P_0}Y_\delta$. This section admits a stratification according to the dimension of the intersection
$
    \ker P_0\cap \ker P\subseteq K^{n}.
$
More precisely, we have
$$
    Y_\delta^{\rm sm}\cap T_{P_0}Y_\delta
    = \bigcup_{i=0}^{n-\delta+1}T_i
$$
where
$$
    T_i = \{
        P\in Y_\delta^{\rm sm}\cap T_{P_0}Y_\delta
    \mid
        \dim(\ker P_0\cap \ker P) = (n-\delta+1)-i
    \}.
$$
In particular, since $\ker P_0$ has dimension $n-\delta+1$, we have
$$
    T_0 = \{
        P\in Y_\delta^{\rm sm}\cap T_{P_0}Y_\delta
    \mid
        \ker P_0 = \ker P
    \}.
$$

\begin{lemma}
\label{lem:Ti}
The Zariski closure $\overline{T_0}\subseteq Y_\delta$ is a linear
space containing $P_0$.
The stratum $T_i$ is nonempty if and only if
$0\leq i\leq\min\{\delta-1,n-\delta+1\}$.
Each nonempty $T_i$ has positive codimension in $Y_\delta^{\rm sm}$.
If $Y_\delta$ is not a hypersurface, only $T_0$ can have
codimension one, and this occurs if and only if $\delta=n=2$.
\end{lemma}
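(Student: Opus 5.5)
The plan is to make every stratum explicit. We first identify $Y_\delta^{\rm sm}$ with the locus of matrices of rank exactly $\delta-1$. We then describe $T_i$ through the pair (kernel of $P$, the linear map $P$ itself), and compute the dimension of each $T_i$ by a fibration over a Schubert-type locus in a Grassmannian. Throughout, set $k\colonequals n-\delta+1=\dim\ker P_0$, and write $K_0\colonequals\ker P_0=\langle e_\delta,\dots,e_n\rangle$.

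\emph{Step 1 ($T_0$).} A matrix $P$ in $T_{P_0}Y_\delta$ has the block form \eqref{eqn:repre_TP0Y}, say $\begin{pmatrix}A&B\\ C&0\end{pmatrix}$. The condition $\ker P=K_0$ says that the last $k$ columns vanish, so $B=0$. Rank $\delta-1$ then forces the first $\delta-1$ columns to be independent. Hence $\overline{T_0}$ is the projective linear space $\left\{\begin{pmatrix}A&0\\ C&0\end{pmatrix}\right\}$, which has dimension $m(\delta-1)-1$ and contains $P_0$. Since $d=(\delta-1)(m+k)-1$, its codimension in $Y_\delta$ is $(\delta-1)k$.

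\emph{Step 2 (range of $i$).} Since $P_0$ has block form with $P_0(K_0)=0$, the restriction of $P$ to $K_0$ is $\begin{pmatrix}B\\0\end{pmatrix}$, whose rank equals $\dim K_0-\dim(K_0\cap\ker P)=i$. Thus $i\le\min\{\operatorname{rank}B\text{-bound }\delta-1,\,k\}$, because $B$ is a $(\delta-1)\times k$ matrix. For the converse, given $i$ in this range, take $C=0$ and $A=\operatorname{diag}(I_{\delta-1-i},0_i)$. Take $B$ with an $I_i$ block in its last $i$ rows and first $i$ columns, and zeros elsewhere. The resulting $P$ has rank exactly $\delta-1$, lies in the tangent space, and belongs to $T_i$.

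\emph{Step 3 (dimension of $T_i$).} Parametrize the affine cone over $T_i$ as follows.
\begin{enumerate}
\item Choose $K=\ker P\in\Gr(k,n)$ with $\dim(K\cap K_0)=k-i$. By the standard Schubert-cell count, this locus has dimension $k(\delta-1)-(k-i)(\delta-1-i)$, since its codimension is $(k-i)(n-k-(k-i))$ and $n-k=\delta-1$.
\item Given $K$, choose an injective linear map $K^n/K\to K^m$. These form an open set of dimension $m(\delta-1)$.
\item Impose the tangent condition $P(K_0)\subseteq\operatorname{im}P_0=\langle e_1,\dots,e_{\delta-1}\rangle$. The image of $K_0$ in $K^n/K$ is $i$-dimensional, so this condition consists of $i(m-\delta+1)$ independent linear conditions on $P$.
\end{enumerate}
Subtracting the resulting $\dim T_i$ from $d$ gives
$$
\operatorname{codim}_{Y_\delta^{\rm sm}}T_i=(k-i)(\delta-1-i)+i(m-\delta+1).
$$
For $i=0$ this recovers the value $(\delta-1)k$ from Step 1. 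This dimension count, especially checking that the Schubert-locus dimension and the linear conditions are counted correctly and independently, is the step I expect to need the most care. As a check, one can verify the formula against the explicit $T_0$ and against small cases such as $\delta=n=2$.

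\emph{Step 4 (conclusions).} If $i=0$, the codimension is $(\delta-1)k\geq1$, and it equals $1$ exactly when $\delta=2$ and $k=1$, i.e.\ $\delta=n=2$. If $i\ge1$, the codimension is at least $i(m-\delta+1)\geq1$ because $m\ge\delta$. It equals $1$ only if $i=1$, $m=\delta$ and $(k-1)(\delta-2)=0$. But $m=\delta$ forces $m=n=\delta$, which is the hypersurface case. Hence, outside the hypersurface case, only $T_0$ can have codimension one, and it does precisely when $\delta=n=2$.
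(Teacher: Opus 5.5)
Your proof is correct. Its skeleton matches the paper's: identify $\overline{T_0}$ as the linear space $\{P : P(\ker P_0)=0\}$, determine the range of $i$, compute $\dim T_i$, and run the same case analysis on the codimension $(\delta-1-i)(n-\delta+1-i)+i(m-\delta+1)$.

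The genuine difference is the dimension count in Step~3.
\begin{itemize}
\item The paper splits $K^n=\ker P_0\oplus K^{\delta-1}$ and fibers $T_i$ over the rank-$i$ map $A=P|_{\ker P_0}\colon\ker P_0\to\operatorname{im}P_0$, a determinantal locus of dimension $i(n-i)$. It then counts the maps $\overline B\colon K^{\delta-1}\to K^m/\operatorname{im}A$ of rank $\delta-1-i$, together with their $i(\delta-1)$-dimensional spaces of lifts. The conditions for $A$ and $\overline B$ to exist give the nonemptiness range at the same time.
\item You instead fiber over $\ker P$ in a Schubert-type locus of $\Gr(k,n)$ (with your $k=n-\delta+1$). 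The fibers are open subsets of linear spaces cut out by $i(m-\delta+1)$ independent linear conditions. This makes the independence of the conditions transparent. The cost is that you must quote the Schubert dimension formula and supply a separate explicit witness for nonemptiness, which your block matrix in Step~2 does correctly.
\end{itemize}

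There are three small points to fix.
\begin{itemize}
\item The codimension of $\{\Lambda\in\Gr(k,n) : \dim(\Lambda\cap K_0)=k-i\}$ is $(k-i)(n-k-i)$, not $(k-i)(n-k-(k-i))$ as you wrote. The dimension $k(\delta-1)-(k-i)(\delta-1-i)$ that you actually use is nonetheless the correct one.
\item Adding base and fiber dimensions requires every fiber to be nonempty. This holds because $i\le\delta-1$, so an injective map $K^n/\ker P\to K^m$ can send the $i$-dimensional image of $K_0$ into $\operatorname{im}P_0$. You should say this explicitly.
\item Do not use $K$ for both the ground field and $\ker P$.
\end{itemize}
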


\begin{proof}
The points of $T_0$ are precisely the rank-$(\delta-1)$ matrices
in the linear space
$$
    \left\{
        P\in\bP^N
    \;\middle|\;
        P(\ker P_0)=0
    \right\}.
$$
This linear space is contained in $Y_\delta$ and contains $P_0$.
Since $T_0$ is a nonempty open subset of it, $\overline{T_0}$
is the whole linear space.

To compute the dimension of $T_i$, we first decompose
$$
    K^n\cong\ker P_0\oplus K^{\delta-1}.
$$
For each $P\in T_i$, the restrictions of a linear map
$\widetilde P\colon K^n\longrightarrow K^m$ representing it
to the two summands behave as follows:
\begin{itemize}
\item Since $P\in T_{P_0}Y_\delta$, the restriction to
$\ker P_0$ defines a linear map
$$
    A\colon\ker P_0\cong K^{n-\delta+1}
    \longrightarrow\operatorname{im}P_0\cong K^{\delta-1}
$$
of rank $i$.
\item The restriction to the second summand defines a linear map
$$
    B\colon K^{\delta-1}\longrightarrow K^m.
$$
Its composition with the quotient map gives
$$
    \overline B\colon K^{\delta-1}
    \longrightarrow K^m/\operatorname{im}A.
$$
Since
$\operatorname{rank}\widetilde P
=i+\operatorname{rank}\overline B=\delta-1$,
the map $\overline B$ has rank $\delta-1-i$.
\end{itemize}
Conversely, any pair of restrictions satisfying these conditions
determines a point of $T_i$. Such choices exist precisely when
$$
    0\leq i\leq\min\{\delta-1,n-\delta+1\}.
$$

The rank-$i$ maps $A$ form a variety of dimension $i(n-i)$.
For fixed $A$, the maps $\overline B$ of rank $\delta-1-i$
form a variety of dimension $(\delta-1-i)\bigl((\delta-1)+(m-i)-(\delta-1-i)\bigr) =m(\delta-1-i)$. Each such map has an affine space of lifts $B$ of dimension
$$
    \dim\operatorname{Hom}
    \left(K^{\delta-1},\operatorname{im}A\right)
    =i(\delta-1).
$$
Subtracting one for projectivization, we obtain
$$
    \dim T_i
    =i(n-i)+m(\delta-1-i)+i(\delta-1)-1.
$$

To determine the codimension of $T_i$ in $Y_\delta^{\rm sm}$,
we compute
\begin{align*}
    \dim Y_\delta-\dim T_i
    &= (\delta-1)(m+n-\delta+1)-\bigl(i(n-i)+m(\delta-1-i)+i(\delta-1)\bigr)\\
    &= (\delta-1-i)(n-\delta+1-i)+i(m-\delta+1).
\end{align*}
For $i=0$, this is $(\delta-1)(n-\delta+1)$, which is positive
and equals one if and only if $\delta=n=2$.
For $i>0$, the codimension is at least $i(m-\delta+1)>0$.
If it equals one, then $i=1$ and $m=\delta$; since
$\delta\leq n\leq m$, this forces $m=n=\delta$.
Thus, outside the hypersurface case, only $T_0$ can have
codimension one. This completes the proof.
\end{proof}

\begin{lemma}
\label{lem:P0-notin-TPY}
For a general $P\in Y_\delta^{\rm sm}$, the tangent space $T_PY_\delta$ does not contain $P_0$.
\end{lemma}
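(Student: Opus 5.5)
The condition ``$P_0\in T_PY_\delta$'' is Zariski closed in $P$ on the irreducible variety $Y_\delta^{\rm sm}$. So it suffices to exhibit a single smooth point $P$ with $P_0\notin T_PY_\delta$; the locus where $P_0\notin T_PY_\delta$ is then a nonempty open subset, and this is what ``general'' means here.

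To see closedness concretely, I would use the description recalled above: $T_PY_\delta=\{Q \mid Q(\ker P)\subseteq \operatorname{im}P\}$. Equivalently, $Q\in T_PY_\delta$ if and only if $\pi_P\circ Q\circ\iota_P=0$, where $\iota_P\colon\ker P\hookrightarrow K^n$ and $\pi_P\colon K^m\to K^m/\operatorname{im}P$. On the rank-$(\delta-1)$ locus, $\ker P$ and $\operatorname{im}P$ vary algebraically; locally they are given by Grassmannian charts. The condition $P_0(\ker P)\subseteq\operatorname{im}P$ is therefore given by polynomial equations in local coordinates, so it is closed. Alternatively, one can invoke that the embedded tangent spaces form a morphism $Y_\delta^{\rm sm}\to\Gr(\bP^d,\bP^N)$, and that the incidence condition ``contains the point $P_0$'' is closed in the Grassmannian.

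For the explicit witness, I would take
\[
P=\begin{pmatrix}0&0\\0&I_{\delta-1}\end{pmatrix},
\]
with the identity block placed in the bottom-right corner, that is, in rows $m-\delta+2,\dots,m$ and columns $n-\delta+2,\dots,n$. This $P$ has rank $\delta-1$, so it lies in $Y_\delta^{\rm sm}$. Its kernel is spanned by $e_1,\dots,e_{n-\delta+1}$, and its image is spanned by $e_{m-\delta+2},\dots,e_m$.

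The key check is that $P_0$ maps $\ker P$ outside $\operatorname{im}P$. Since $\delta\ge2$, the vector $e_1$ lies in $\ker P$, because $1\le n-\delta+1$. We have $P_0e_1=e_1$, because $1\le\delta-1$. Finally, $e_1\notin\operatorname{im}P$ because $1<m-\delta+2$, which holds since $\delta\le n\le m$. Hence $P_0(\ker P)\not\subseteq\operatorname{im}P$, so $P_0\notin T_PY_\delta$.

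The only mildly delicate step is the closedness claim, namely making rigorous that the tangent-space condition varies algebraically. I would handle it with the Gauss map to the Grassmannian, which is a morphism on the smooth locus. Irreducibility of $Y_\delta$ is already recalled in the paper.
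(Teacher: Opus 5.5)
Your proposal is correct and matches the paper's proof: both reduce the claim to a single witness using closedness of the condition on the irreducible $Y_\delta^{\rm sm}$, and both use $e_1\in\ker P$ with $P_0e_1=e_1\notin\operatorname{im}P$. The paper's witness is the diagonal matrix with ones in positions $(i,i)$ for $2\le i\le\delta$, while your bottom-right identity block works equally well. One small slip: $e_1\in\ker P$ follows from $\delta\le n$, not from $\delta\ge 2$; the hypothesis $\delta\ge2$ is what gives $P_0e_1=e_1$.
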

\begin{proof}
Since $Y_\delta$ is irreducible and the locus of $P\in Y_\delta^{\rm sm}$ for which $T_PY_\delta$ contains $P_0$ is Zariski closed, it suffices to find a point $P$ such that $T_PY_\delta$ does not contain $P_0$. Let $P\in Y_\delta^{\rm sm}$ be the point represented by the matrix whose $(i, i)$-entries are equal to $1$ for $i = 2,\dots,\delta$, and zero elsewhere. Let $e_i$ denote the $i$th standard basis vector in $K^n$ or $K^m$, as appropriate. Then
$$
    e_1\in\ker P
    \quad\text{and}\quad
    \operatorname{im}P
    = \operatorname{span}(e_2,\dots,e_\delta).
$$
Since $P_0(e_1) = e_1$, we have
$$
    P_0(\ker P)
    \not\subseteq
    \operatorname{im}P,
$$
and hence $P_0\notin T_PY_\delta$. This gives an example of $P$ with the desired property.
\end{proof}

\begin{lemma}
\label{lem:TP0Y-int-TPY}
If $Y_\delta$ is not a hypersurface, then for a general point $P\in Y_\delta^{\rm sm}$, we have
$$
    \dim\left(
         T_{P_0}Y_\delta\cap T_PY_\delta
    \right)
    \leq d - 2.
$$
\end{lemma}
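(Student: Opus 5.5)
The plan is to compute $\dim(T_{P_0}Y_\delta\cap T_PY_\delta)$ exactly, by linear algebra on annihilators, for one explicit choice of $P$, and then pass to a general $P$ by semicontinuity. Write $a\colonequals n-\delta+1=\dim\ker P$ and $b\colonequals m-\delta+1=\dim\operatorname{coker}P$ for $P\in Y_\delta^{\rm sm}$. Then $c=ab$, and the target inequality $\dim(T_{P_0}Y_\delta\cap T_PY_\delta)\le d-2$ says that the affine cone over the intersection has codimension at least $c+2$ in $\operatorname{Mat}_{m\times n}(K)$.

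\textbf{Step 1: describe the annihilators.} Identify the dual of $\operatorname{Mat}_{m\times n}(K)$ with itself via the trace pairing $\langle Q,R\rangle=\operatorname{tr}(R^{T}Q)$. By the description of $T_PY_\delta$ recalled above, the cone over $T_PY_\delta$ is $\{Q\mid \pi_P\circ Q|_{\ker P}=0\}$, where $\pi_P\colon K^m\to K^m/\operatorname{im}P$ is the quotient map. Its annihilator is therefore the span of the rank-one tensors $w\otimes v$ with $v\in\ker P$ and $w\in(\operatorname{im}P)^{\perp}$, that is, the subspace $(\operatorname{im}P)^{\perp}\otimes\ker P$, which has dimension $ab=c$. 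The same description holds for $P_0$.

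\textbf{Step 2: compute the codimension.} The annihilator of the intersection of the two cones is the sum of the two annihilators. Its dimension is
\[
2c-\dim\bigl((\operatorname{im}P_0)^{\perp}\otimes\ker P_0\;\cap\;(\operatorname{im}P)^{\perp}\otimes\ker P\bigr)=2c-\alpha\beta,
\]
where $\alpha=\dim(\ker P_0\cap\ker P)$ and $\beta=\dim\bigl((\operatorname{im}P_0)^{\perp}\cap(\operatorname{im}P)^{\perp}\bigr)$. Here I use that $(U\otimes V)\cap(U'\otimes V')=(U\cap U')\otimes(V\cap V')$. Consequently
\[
\dim(T_{P_0}Y_\delta\cap T_PY_\delta)=N-2c+\alpha\beta,
\]
and the claim is equivalent to $ab-\alpha\beta\ge2$.

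\textbf{Step 3: choose $P$ and verify the inequality.} Using the action of $\GL_m\times\GL_n$, which preserves $Y_\delta^{\rm sm}$, I choose $P$ of rank $\delta-1$ whose kernel and image are in general position with respect to those of $P_0$. Then $\alpha=\max(0,2a-n)$ and $\beta=\max(0,2b-m)$. Since $\delta\ge2$, we have $2a-n=a-(\delta-1)<a$, so $\alpha\le a-1$, and similarly $\beta\le b-1$. Hence
\[
ab-\alpha\beta\;\ge\;ab-(a-1)(b-1)\;=\;a+b-1.
\]
This is at least $2$ unless $a=b=1$, i.e.\ unless $m=n=\delta$, which is precisely the excluded hypersurface case.

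\textbf{Step 4: pass to a general $P$.} On $Y_\delta^{\rm sm}$ the map $P\mapsto T_PY_\delta$ is a morphism to a Grassmannian. The dimension of the intersection of this varying linear space with the fixed space $T_{P_0}Y_\delta$ is upper semicontinuous in $P$. So the locus where it is at most $d-2$ is open, and Step 3 shows it is nonempty. Since $Y_\delta$ is irreducible, this open set is dense, which gives the statement for a general $P$.

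The main point needing care is Step 2: the identification of the annihilator as a tensor subspace, and the formula for intersecting two tensor subspaces. The rest is bookkeeping with $a$ and $b$.
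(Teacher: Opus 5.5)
Your proposal is correct and follows essentially the same route as the paper: upper semicontinuity on the irreducible $Y_\delta^{\rm sm}$, plus an explicit $P$ whose kernel and image are in general position relative to those of $P_0$. The paper takes $P=\begin{pmatrix}0&0\\0&I_{\delta-1}\end{pmatrix}$ and counts the overlap of the two zero blocks, getting codimension $ab-\alpha\beta$ in $T_{P_0}Y_\delta$; your annihilator/tensor computation is a coordinate-free version of that same count. Your single bound $ab-\alpha\beta\geq a+b-1$ replaces the paper's split into the overlapping and non-overlapping cases.
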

\begin{proof}
Since $Y_\delta$ is irreducible and the function
$
    P\longmapsto
    \dim\left(
        T_{P_0}Y_\delta\cap T_PY_\delta
    \right)
$
is upper semicontinuous on $Y_\delta^{\rm sm}$, it suffices to find a point $P$ with
$$
    \dim\left(
         T_{P_0}Y_\delta\cap T_PY_\delta
    \right)
    \leq d - 2,
$$
or equivalently, such that $T_{P_0}Y_\delta\cap T_PY_\delta$ has codimension at least $2$ in $T_{P_0}Y_\delta$. Consider
$$
    P = \begin{pmatrix}
        0 & 0 \\
        0 & I_{\delta-1}
    \end{pmatrix}\in Y_\delta^{\rm sm}.
$$
Its tangent space $T_PY_\delta$ consists of points represented by matrices of the form
$$
    \begin{pmatrix}
        0 & \ast_{m-\delta+1,\,\delta-1} \\
        \ast_{\delta-1,\,n-\delta+1}
        & \ast_{\delta-1,\,\delta-1}
    \end{pmatrix}.
$$
Comparing this with \eqref{eqn:repre_TP0Y}, we see that the intersection $T_{P_0}Y_\delta\cap T_PY_\delta$ consists of points represented by $m\times n$ matrices with $(m-\delta+1)\times(n-\delta+1)$ zero matrices as the top-left and bottom-right blocks.

\begin{itemize}
\item Suppose that 
$$
    m-\delta+1\leq \delta-1
    \qquad\text{or}\qquad
    n-\delta+1\leq \delta-1.
$$
In this case, the two zero blocks do not overlap. Thus the linear subspace
$$
    T_{P_0}Y_\delta\cap T_PY_\delta
    \subseteq T_{P_0}Y_\delta
$$
is defined by $(m-\delta+1)(n-\delta+1)$ independent linear conditions. Hence
$$
    \operatorname{codim}_{\,T_{P_0}Y_\delta}\left(
        T_{P_0}Y_\delta\cap T_PY_\delta
    \right) = (m-\delta+1)(n-\delta+1).
$$
Note that this quantity equals $1$ if and only if $m = n = \delta$.

\item Suppose that 
$$
    m-\delta+1 > \delta-1
    \qquad\text{and}\qquad
    n-\delta+1 > \delta-1.
$$
In this case, the two zero blocks overlap in a matrix of size
$$
    (m-2\delta+2)\times(n-2\delta+2).
$$
Consequently, the linear subspace
$
    T_{P_0}Y_\delta\cap T_PY_\delta
    \subseteq T_{P_0}Y_\delta
$
has codimension
\begin{align*}
    &(m-\delta+1)(n-\delta+1) - (m-2\delta+2)(n-2\delta+2) \\
    &= (\delta - 1)(m + n - 3\delta + 3).
\end{align*}
The inequalities above give $m\geq 2\delta-1$ and $n \geq 2\delta-1$, so this codimension is at least $(\delta-1)(\delta+1)\geq 3$.
\end{itemize}

In both cases, the intersection $T_{P_0}Y_\delta\cap T_PY_\delta$ has codimension at least $2$ in $T_{P_0}Y_\delta$ under the assumption that $Y_\delta$ is not a hypersurface. This completes the proof.
\end{proof}

\subsection{Finite linear sections with a tangency}

A $c$-dimensional linear subspace $L\subseteq\bP^N$ containing
$P_0$ is tangent to $Y_\delta$ at $P_0$ if and only if
$L\cap T_{P_0}Y_\delta$ contains a line through $P_0$.
The $c$-dimensional linear subspaces with this property form the variety
$$
    X_0\colonequals\left\{
        L\in\Gr(\bP^c, \bP^N)
    \;\middle|\;
        L\ni P_0,\;
        \dim(L\cap T_{P_0}Y_\delta) \geq 1
    \right\}.
$$
Our goal is to find an $L \in X_0$ that is tangent to $Y_\delta$ at $P_0$ with multiplicity two and transverse at all other intersection points. The proof proceeds in three steps:
\begin{enumerate}[label=\textup{(\arabic*)}]
    \item There exists $L\in X_0$ that is tangent to $Y_\delta$ at $P_0$ with multiplicity two.
    \item The locus of $L\in X_0$ that meet the singular locus $Y_{\delta - 1}\subseteq Y_\delta$ has codimension $\geq 2$.
    \item The locus of $L\in X_0$ that are tangent to $Y_\delta^{\rm sm}$ at a second point has codimension $\geq 1$.
\end{enumerate}

\begin{lemma}
\label{lem:L-tan-to-P0}
The variety $X_0$ is irreducible with $\dim X_0 = cd-1$.
\end{lemma}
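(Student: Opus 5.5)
The plan is to realize $X_0$ as the image of an irreducible incidence variety that is a Grassmannian bundle over a projective space, and then to show that the projection onto $X_0$ is generically injective. Throughout, write $T\colonequals T_{P_0}Y_\delta\cong\bP^d$ and recall that $N=c+d$. Consider
$$
    \mathcal Z\colonequals\left\{(\lambda,L)\;\middle|\;\lambda\text{ a line with } P_0\in\lambda\subseteq T,\ L\in\Gr(\bP^c,\bP^N),\ \lambda\subseteq L\right\}.
$$
The first projection sends $\mathcal Z$ to the space of lines through $P_0$ in $T$, which is a $\bP^{d-1}$. Fix such a line $\lambda$. Projecting from $\lambda$ identifies the $c$-planes containing $\lambda$ with $(c-2)$-planes in $\bP^{N-2}$. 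So every fiber is isomorphic to $\Gr(\bP^{c-2},\bP^{N-2})$, which is irreducible of dimension $(c-1)(N-c)=(c-1)d$. By the same fiber-dimension/irreducibility criterion used in Lemma~\ref{lem:dim_incidence} (\cite{Sha13}*{Theorem~1.26}), $\mathcal Z$ is irreducible, and
$$
\dim\mathcal Z=(d-1)+(c-1)d=cd-1 .
$$

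By the characterization of tangency stated just before the definition of $X_0$, $X_0$ is exactly the image of the second projection $\mathcal Z\to\Gr(\bP^c,\bP^N)$. This map is proper, so $X_0$ is closed and irreducible, and $\dim X_0\le cd-1$.

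For equality it suffices to show that a general point of $X_0$ has a finite, in fact a single-point, fiber. Equivalently, a general $L$ in the image satisfies $L\cap T=\lambda$ exactly. Fix $\lambda$ and project from it to $\bP^{N-2}$. Under this projection $T$ becomes a $\bP^{d-2}$, and a general $c$-plane $L\supseteq\lambda$ becomes a general $\bP^{c-2}$. Since
$$
(d-2)+(c-2)=N-4<N-2,
$$
these two images are disjoint for general $L$. Hence $L\cap T=\lambda$, and $\lambda$ is recovered from $L$. Therefore the general fiber of $\mathcal Z\to X_0$ is a single point, which gives $\dim X_0=\dim\mathcal Z=cd-1$.

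The only step needing care is this last genericity claim. In particular one must check that the condition ``$L\cap T$ is exactly a line'' is open and nonempty in $\mathcal Z$. Openness follows from upper semicontinuity of $\dim(L\cap T)$, and nonemptiness is the dimension count above. Because the dimension inequality holds with room to spare, no case distinction is needed; this includes the case $c=1$, where $L=\lambda$ itself.
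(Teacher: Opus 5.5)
Your proposal is correct and follows essentially the same route as the paper. Both arguments build the incidence of pairs $(L,\lambda)$ over the $\bP^{d-1}$ of lines through $P_0$ in $T_{P_0}Y_\delta$, with fibers $\Gr(\bP^{c-2},\bP^{N-2})$, to get irreducibility and dimension $cd-1$, and then use that a general $c$-plane through $\lambda$ meets $T_{P_0}Y_\delta$ only in $\lambda$. The paper simply asserts this last fact, whereas you justify it explicitly by projecting from $\lambda$ and using $(d-2)+(c-2)<N-2$.
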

\begin{proof}
The lines in $T_{P_0}Y_\delta$ passing through $P_0$ form the variety
$$
    \mathcal{B}\colonequals\left\{
        \ell\in\Gr(\bP^1, T_{P_0}Y_\delta)
    \;\middle|\;
        P_0\in\ell
    \right\}
    \cong\bP^{d - 1}.
$$
Consider the incidence correspondence
$$\xymatrix{
    & I_0\colonequals\left\{
        (L, \ell)\in X_0\times\mathcal{B}
    \;\middle|\;
        L\supseteq\ell
    \right\}\ar[dl]_-{\pi_1}\ar[dr]^-{\pi_2} & \\
    X_0 & & \mathcal{B}.
}$$
For each $\ell\in\mathcal{B}$, the fiber $\pi_2^{-1}(\ell)\subseteq I_0$ consists of all pairs $(L, \ell)$ such that $L\in\Gr(\bP^c, \bP^N)$ contains~$\ell$. This gives $I_0$ the structure of a bundle over $\mathcal{B}$ with fiber $\Gr(\bP^{c-2}, \bP^{N-2})$. It follows that $I_0$ is irreducible with
$$
    \dim I_0 = \dim\Gr(\bP^{c-2}, \bP^{N-2}) + \dim\mathcal{B}
    = (c - 1)d + (d - 1)
    = cd - 1.
$$
The projection $\pi_1$ is surjective, so $X_0$ is irreducible.
Fix a line $\ell\in\mathcal B$. A general $c$-plane $L$
containing $\ell$ satisfies $L\cap T_{P_0}Y_\delta=\ell$. The fiber of $\pi_1$ over such an $L$ consists of the single
point $(L,\ell)$. Thus $\pi_1$ has zero-dimensional general
fibers, and $\dim X_0=\dim I_0=cd-1$.
\end{proof}

\begin{lemma}
\label{lem:double-not-triple}
There exists $L_0\in X_0$ that is tangent to $Y_\delta$ at $P_0$ with multiplicity two.
\end{lemma}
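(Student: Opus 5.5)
The plan is to write down an explicit $c$-plane $L_0$ through $P_0$. Near $P_0$ we describe $Y_\delta$ by a Schur-complement chart, and in that chart we compute the local ring of $L_0\cap Y_\delta$ at $P_0$ directly. Only the local multiplicity at $P_0$ is needed here; the behaviour of $L_0$ away from $P_0$ is handled by the later steps.

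\emph{Step 1: the chart.} In the affine chart where the $(1,1)$-entry is $1$, write a matrix near $P_0$ in block form $\begin{pmatrix} A & B\\ C & E\end{pmatrix}$, with $A$ of size $(\delta-1)\times(\delta-1)$ and $A$ invertible near $I_{\delta-1}$. Such a matrix has rank $<\delta$ if and only if its Schur complement vanishes:
\[
E-CA^{-1}B=0 .
\]
These are exactly $c=(m-\delta+1)(n-\delta+1)$ equations. They generate the ideal of $Y_\delta$ near $P_0$, because they express $Y_\delta$ locally as the graph of the map $(A,B,C)\mapsto CA^{-1}B$. Consequently, for any linear space $L\ni P_0$, the local ring $\mathcal O_{L\cap Y_\delta,P_0}$ is obtained by restricting these $c$ equations to $L$.

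\emph{Step 2: the tangent line.} Let $\ell$ be the line through $P_0$ and $Q\colonequals E_{1,\delta}+E_{\delta,1}$, where $E_{i,j}$ denotes the matrix unit. The entry $(1,\delta)$ lies in the upper-right block and $(\delta,1)$ lies in the lower-left block of~\eqref{eqn:repre_TP0Y}, so $Q\in T_{P_0}Y_\delta$ and therefore $\ell\subseteq T_{P_0}Y_\delta$. Along the curve $P_0+tQ$ we have $A=I$, $B=tE_{1,\delta}$ and $C=tE_{\delta,1}$. Hence $CA^{-1}B$ is $t^2$ at position $(\delta,\delta)$ and $0$ elsewhere.

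\emph{Step 3: the $c$-plane.} Let $L_0$ be spanned by $P_0$, $Q$, and the $c-1$ matrix units $E_{i,j}$ with $i,j\geq\delta$ and $(i,j)\neq(\delta,\delta)$. Then $\dim L_0=c$ and $L_0\supseteq\ell$, so $L_0\in X_0$. Parametrize points of $L_0$ near $P_0$ as $P_0+tQ+\sum s_{ij}E_{i,j}$. On $L_0$ the Schur-complement equations become
\[
s_{ij}=0 \quad \text{for } (i,j)\neq(\delta,\delta), \qquad 0-t^2=0 .
\]
Thus $\mathcal O_{L_0\cap Y_\delta,P_0}\cong K[t]/(t^2)$, which has length exactly $2$. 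So $L_0$ is tangent to $Y_\delta$ at $P_0$ with multiplicity two.

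\emph{Main point to verify.} The real content lies in Step 1: the Schur-complement equations must generate the ideal of $Y_\delta$ scheme-theoretically at $P_0$. This holds because $Y_\delta$ is reduced, hence the graph description is an isomorphism of local schemes, and that suffices. The construction needs no hypothesis beyond $\delta\geq2$, which guarantees that row and column $1$ lie in the identity block. It therefore also covers the hypersurface case, although that case was already treated separately.
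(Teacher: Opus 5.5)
Your proposal is correct and is essentially the paper's proof. Your $L_0$, spanned by $P_0$, $E_{1,\delta}+E_{\delta,1}$ and the lower-right matrix units other than $E_{\delta,\delta}$, is exactly the paper's, and the paper likewise reduces to the Schur complement in the chart where the $(1,1)$-entry is $1$ to get the length-two ideal $(t^2, s_{ij})$; the only difference is that the paper justifies the scheme structure by noting that block row reduction makes the $\delta\times\delta$ minors generate the ideal of the Schur-complement entries, whereas you argue via reducedness and the graph description, and both justifications are valid.
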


\begin{proof}
We work in the affine chart of $\mathbb{P}^N$ consisting of $m \times n$ matrices whose $(1,1)$-entry is equal to $1$. Let $L_0\subseteq\bP^N$ be the projective linear subspace given
in this chart by
$$
    M\colonequals\begin{pmatrix}
        I_{\delta - 1} & A \\
        B & C
    \end{pmatrix}
$$
where
\begin{itemize}
\item $A = (a_{ij})$ has $a_{ij} = 0$ for all $(i,j) \neq (1,1)$,
\item $B = (b_{ij})$ has $b_{11} = a_{11}$ and $b_{ij} = 0$ for all $(i,j) \neq (1,1)$,
\item $C = (c_{ij})$ has $c_{11} = 0$.
\end{itemize}
The $c$ free coordinates give $\dim L_0=c$; setting $C=0$ gives a line through $P_0$ in $T_{P_0}Y_\delta$, so $L_0\in X_0$. By the standard block row-reduction,
$$
    \mathrm{rank}(M)
    = \mathrm{rank}\begin{pmatrix}
        I_{\delta - 1} & A \\
        0 & C - BA
    \end{pmatrix}
    = (\delta - 1) + \mathrm{rank}(C - BA).
$$
The same row reduction shows that the $\delta\times\delta$ minors generate the ideal of the entries of $C-BA$. Thus $L_0\cap Y_\delta$ is defined in this chart by
$$
    b_{11}a_{11}=a_{11}^{2}=0
    \qquad\text{and}\qquad
    c_{ij}=0 \quad \text{for all} \quad (i, j).
$$
Thus, $P_0$ is isolated in $L_0\cap Y_\delta$ and has intersection multiplicity two, as required.
\end{proof}

\begin{lemma}
\label{lem:multiplicity-2-open}
There exists a nonempty Zariski open subset $U\subseteq X_0$ such that every $L\in U$ is tangent to $Y_\delta$ at $P_0$ with multiplicity two.
\end{lemma}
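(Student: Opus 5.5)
We will show that the condition ``$P_0$ is an isolated point of $L\cap Y_\delta$ whose local ring has length at most two'' is open on $X_0$. Combined with Lemma~\ref{lem:double-not-triple} and the irreducibility of $X_0$ from Lemma~\ref{lem:L-tan-to-P0}, this yields the desired nonempty open subset $U$. Every $L\in X_0$ is tangent at $P_0$, so the length of $\mathcal O_{L\cap Y_\delta,P_0}$ is at least two whenever it is finite. Hence on the locus where the length is at most two, it is exactly two.

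To make this precise, we work in the affine chart $U_{11}\cong\bA^N$ around $P_0$ where the $(1,1)$-entry equals $1$. Over the irreducible variety $X_0$, we consider the universal family
$$
\mathcal Z\colonequals\{(L,P)\in X_0\times \bA^N \mid P\in L\cap Y_\delta\},
$$
which comes with the section $\sigma\colon L\mapsto (L,P_0)$. For each $L$, let $\mathfrak m_L$ denote the ideal of $P_0$ in the coordinate ring of the affine space $L\cap U_{11}\cong \bA^c$. We claim that the length of $\mathcal O_{L\cap Y_\delta,P_0}$ is at most two if and only if $I_L+\mathfrak m_L^2\supseteq \mathfrak m_L$ fails in at most one dimension, and $I_L+\mathfrak m_L^3=I_L+\mathfrak m_L^2$. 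Here $I_L$ is the ideal generated by the restrictions of the $\delta\times\delta$ minors. More concretely, the length is at most two if and only if
$$
\dim_K\bigl(\mathcal O_{L,P_0}/(I_L+\mathfrak m_L^3)\bigr)\leq 2.
$$
Indeed, if this finite-dimensional quotient has dimension at most two, then by Nakayama-type reasoning the filtration by powers of $\mathfrak m$ stabilizes before degree three. It follows that $\mathfrak m^2\subseteq I_L+\mathfrak m^3$, hence $\mathfrak m^2\subseteq I_L$ locally. So $P_0$ is isolated and $\mathcal O_{Z,P_0}$ equals this quotient. The converse is clear.

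The key step is then semicontinuity. We parametrize $L$ near a given point of $X_0$ by a matrix of linear forms $\varphi_L\colon\bA^c\to\bA^N$ sending $0\mapsto P_0$, with coefficients regular in $L$. The quotient $\mathcal O/(I_L+\mathfrak m^3)$ is the cokernel of a linear map from a fixed finite-dimensional space into $K[x_1,\dots,x_c]/\mathfrak m^3$. This map is given by the Taylor expansions up to order two of the minors composed with $\varphi_L$, multiplied by monomials of degree at most two. Its entries depend polynomially on $L$, so the corank is upper semicontinuous. Therefore the locus where it is at most two is open, and it is nonempty because it contains the $L_0$ of Lemma~\ref{lem:double-not-triple}. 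The main obstacle is making the local parametrization of $X_0$ rigorous. To handle it, we will use the standard affine charts of $\Gr(\bP^c,\bP^N)$ intersected with the closed condition $L\ni P_0$, on which such a $\varphi_L$ exists algebraically. Covering $X_0$ by finitely many such charts gives the open set $U$ as a union of open pieces.
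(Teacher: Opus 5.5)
Your proof is correct, and it takes a genuinely different route from the paper's.

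\textbf{The paper's route.} The paper works on $Y_\delta$ itself and uses that $P_0$ is a smooth point. Near $L_0$ it splits the $d$ linear equations of $L$ into two groups: $d-1$ of them cut out a smooth curve $C_L\ni P_0$ on $Y_\delta$, and one equation $f_{L,d}$ remains. Using second-order expansions in a fixed regular system of parameters, it shows that the coefficient $a(L)$ of $t_L^2$ in $f_{L,d}|_{C_L}$ is a regular function of $L$. It then takes $U=\{a\neq 0\}$.

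\textbf{Your route.} You replace this local-analytic bookkeeping with two steps. First, a commutative-algebra criterion: the length of $\mathcal O_{L\cap Y_\delta,P_0}$ is at most two if and only if $\dim_K \mathcal O_{L,P_0}/(I_L+\mathfrak m_L^3)\le 2$. This criterion is right. The graded pieces force $\bar{\mathfrak m}^2=\bar{\mathfrak m}^3$, so $\mathfrak m_L^2\subseteq I_L+\mathfrak m_L^3$. Nakayama, applied in the local ring as you correctly note, then gives $\mathfrak m_L^2\subseteq I_L$. Second, lower semicontinuity of the rank of a matrix whose entries are polynomial in the coefficients of $\varphi_L$. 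The lower bound of two comes for free from $\dim(L\cap T_{P_0}Y_\delta)\geq 1$, since the Zariski tangent space of $L\cap Y_\delta$ at $P_0$ is $T_{P_0}L\cap T_{P_0}Y_\delta$.

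\textbf{Comparison.} Your approach is more elementary and more robust.
\begin{itemize}
\item It uses neither smoothness of $Y_\delta$ at $P_0$ nor any implicit-function-style expansions.
\item It applies to any closed subscheme and any point.
\item It describes the open set intrinsically on all of $X_0$, as exactly the locus of multiplicity two, with no shrinking of neighborhoods around $L_0$.
\end{itemize}
The paper's approach gives a more explicit geometric picture instead: near $L_0$, $L\cap Y_\delta$ is a hypersurface section of a smooth curve, and a single regular function $a(L)$ detects the double point.

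The parametrization step you flag as the main obstacle is routine. The $c$-planes through $P_0$ form $\Gr(c, K^{N+1}/K\hat P_0)$, and its standard charts directly give $\varphi_L(x)=P_0+V(L)x$ with $V(L)$ regular. Alternatively, you can truncate the ideal generated by the minors together with the $d$ linear forms defining $L$, whose coefficients are regular in $L$, as the paper does.
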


\begin{proof}
Choose $L_0$ as in Lemma~\ref{lem:double-not-triple}. Fix an affine chart $\bA^N\subseteq\bP^N$ containing $P_0$, with coordinates $z=(z_1,\ldots,z_N)$. Working in an affine chart of the Grassmannian containing $L_0$, we may choose a Zariski open neighborhood
$W\subseteq X_0$ of $L_0$ and defining equations for the subspaces $L\in W$ of the form
$$
f_{L,i}(z)=c_{i0}(L)+\sum_{j=1}^{N}c_{ij}(L)z_j=0, \qquad \text{ for } i=1,\ldots,d,
$$
where each coefficient $c_{ij}$ is a regular function on $W$.

After re-indexing, we may assume that $f_{L_0,1},\ldots,f_{L_0,d-1}$ have linearly independent differentials on $T_{P_0}Y_\delta$. This is possible since $L_0\cap Y_\delta$ has length two at $P_0$, so its tangent space $L_0\cap T_{P_0}Y_\delta$ is one-dimensional, and the $d$ differentials $df_{L_0,i}|_{T_{P_0}Y_\delta}$ span a space of dimension exactly $d-1$. Complete $f_{L_0,1},\ldots,f_{L_0,d-1}$ to a regular system of parameters on $Y_\delta$ at $P_0$ by a function $t$. After shrinking $W$, for every $L\in W$ the common zero locus of $f_{L,1},\ldots,f_{L,d-1}$ on $Y_\delta$ in the chosen affine chart is smooth of dimension one at $P_0$. We may therefore choose a sufficiently small open neighborhood $C_L$ of $P_0$ such that $C_L$ is a smooth curve and $t_L\colonequals t|_{C_L}$ is a local parameter at $P_0$.

Since $P_0\in L$, the restriction $f_{L,d}|_{C_L}$ vanishes at $P_0$, so its expansion in the local parameter $t_L$ has no constant term. To see that the linear term also vanishes, observe that the first $d-1$ differentials $df_{L,1}(P_0),\ldots,df_{L,d-1}(P_0)$ have a one-dimensional common kernel on $T_{P_0}Y_\delta$, namely the tangent space to $C_L$ at $P_0$. Since $L$ is tangent to $Y_\delta$ at $P_0$, the common
kernel of all $d$ differentials on $T_{P_0}Y_\delta$ is nonzero. This kernel is contained in the one-dimensional tangent space to $C_L$ at $P_0$, so the two spaces coincide. In particular, $df_{L,d}(P_0)$ vanishes on the tangent space to $C_L$, which means that the coefficient of $t_L$ is zero. Consequently, for some scalar $a(L)$, we can express
\begin{equation}
\label{eqn:fLd=at2}
    f_{L,d}|_{C_L} \equiv a(L)t_L^2 \pmod{t_L^3}.
\end{equation}

We next prove that $a(L)$ is regular. Set $x_i \colonequals f_{L_0,i}|_{Y_\delta}$ for $i=1,\ldots,d-1$ and write $x=(x_1,\ldots,x_{d-1})^T$. The functions $x_1,\ldots,x_{d-1},t$ are regular near $P_0$ on $Y_\delta$ and are fixed as $L$ varies in $W$. Write $F_L=(f_{L,1},\ldots,f_{L,d-1})^T$ and expand its restriction to $Y_\delta$ through degree two in these fixed local parameters:
$$
    F_L|_{Y_\delta}(x,t)
    = J(L)x + b(L)t + Q_L(x,t) + O(3),
$$
where each entry of $Q_L$ is homogeneous quadratic and $O(3)$ denotes terms of total degree at least
three in $x,t$. All coefficients are regular in $L$, and $J(L_0)$ is the identity matrix, so we may assume that $\det J(L)\neq 0$. 

Since each $x_i$ vanishes at $P_0$ and $t_L$ is a
local parameter on $C_L$, we may write
$$
    x_i|_{C_L}
    \equiv u_i(L)t_L+v_i(L)t_L^2 \pmod{t_L^3},
    \qquad \text{ for } i=1,\ldots,d-1.
$$
Put
$$
    u(L)=(u_1(L),\ldots,u_{d-1}(L))^T,
    \qquad
    v(L)=(v_1(L),\ldots,v_{d-1}(L))^T.
$$
To compute the quadratic contribution, use the
homogeneity of $Q_L$:
$$
    Q_L(u(L)t_L,t_L)
    =Q_L(t_Lu(L),t_L\cdot1)
    =t_L^2Q_L(u(L),1).
$$
Terms involving $v(L)t_L^2$ inside $Q_L$ have degree at least three in $t_L$. Substituting the expansions into $F_L|_{C_L}=0$ and comparing coefficients of $t_L$ and $t_L^2$ gives
$$
    J(L)u(L)=-b(L),
    \qquad
    J(L)v(L)=-Q_L(u(L),1).
$$
After possibly shrinking $W$, the entries of the inverse matrix $J(L)^{-1}$ are regular on a neighborhood of $L_0$ in $W$. Then the displayed equations above show that $u(L)$ and $v(L)$ are regular in $L$.

Similarly, expand $f_{L,d}|_{Y_\delta}$ in these parameters:
$$
    f_{L,d}|_{Y_\delta}(x, t)
    = \sum_{i=1}^{d-1}\lambda_i(L)x_i
        + \beta(L)t + R_L(x,t) + O(3),
$$
where $R_L$ is homogeneous quadratic and all coefficients are regular in $L$. Substituting the expansions of $x_i|_{C_L}$ into \eqref{eqn:fLd=at2} and comparing the coefficients of $t_L^2$, we get
$$
    a(L)=\sum_{i=1}^{d-1}\lambda_i(L)v_i(L)
    +R_L(u(L),1),
$$
which is regular in $L$ since $u(L)$, $v(L)$, and the coefficients of this expansion are regular.

Since $a(L_0)\neq0$, the nonvanishing locus of $a$ is a nonempty open subset $U\subseteq X_0$. For every $L\in U$, the point $P_0$ is an isolated double point of $L\cap Y_\delta$.
\end{proof}

\begin{lemma}
\label{lem:meet-sing}
The locus $X_s\subseteq X_0$ of linear subspaces $L\in X_0$ that meet the singular locus $Y_{\delta - 1}\subseteq Y_\delta$ is closed and has dimension at most $cd - 3$.
\end{lemma}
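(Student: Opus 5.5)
Closedness is formal. $Y_{\delta-1}$ is closed in $\bP^N$, so $\{L\in\Gr(\bP^c,\bP^N)\mid L\cap Y_{\delta-1}\neq\emptyset\}$ is the image of a closed incidence variety under a proper projection, hence closed. Intersecting with $X_0$ gives $X_s$ closed.

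For the dimension I will use an incidence correspondence over the singular locus. Set
$$
    I_s\colonequals\left\{(L,Q)\in X_0\times Y_{\delta-1}\;\middle|\;Q\in L\right\},
$$
so that $X_s$ is the image of $I_s$ under the first projection and $\dim X_s\leq\dim I_s$. I will bound $\dim I_s$ by fibering over the second factor. Since $P_0$ has rank $\delta-1$, $P_0\notin Y_{\delta-1}$, so every $Q\in Y_{\delta-1}$ is distinct from $P_0$. The fiber over $Q$ consists of $c$-planes $L$ that contain the line $\overline{P_0Q}$ and meet $T_{P_0}Y_\delta$ in at least a line through $P_0$. I will split into two cases according to whether $Q\in T_{P_0}Y_\delta$.

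\emph{Case $Q\notin T_{P_0}Y_\delta$.} I will mimic the count in Lemma~\ref{lem:L-tan-to-P0}: choose a line $\ell\in\mathcal B\cong\bP^{d-1}$, and then $L$ must contain the plane spanned by $\ell$ and $Q$. These $L$ form a $\Gr(\bP^{c-3},\bP^{N-3})$, of dimension $(c-2)(N-c)=(c-2)d$. The fiber therefore has dimension at most $(d-1)+(c-2)d=cd-d-1$. The relevant locus of $Q$ has dimension at most $\dim Y_{\delta-1}$, and
$$
    d-\dim Y_{\delta-1}=(m+n-2\delta+3)\geq 3,
$$
since $\dim Y_{\delta-1}=(\delta-2)(m+n-\delta+2)-1$ and $\delta\leq n\leq m$. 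This piece thus contributes at most $cd-d-1+d-3=cd-4$.

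\emph{Case $Q\in T_{P_0}Y_\delta$.} Here the line $\overline{P_0Q}$ already lies in $T_{P_0}Y_\delta$. Hence every $c$-plane containing it lies in $X_0$, and the fiber is a $\Gr(\bP^{c-2},\bP^{N-2})$ of dimension $(c-1)d$. I then need $\dim(Y_{\delta-1}\cap T_{P_0}Y_\delta)\leq d-3$. To get this I will run the stratification of Lemma~\ref{lem:Ti} with rank $\leq\delta-2$ in place of rank $\delta-1$. A matrix $Q$ in $T_{P_0}Y_\delta$ is described by its block $A\colon\ker P_0\to\operatorname{im}P_0$ of rank $i$ together with the map $\overline B$ of rank $\leq\delta-2-i$. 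The same parameter count gives dimension
$$
    i(n-i)+m(\delta-2-i)+i(\delta-1)-1
$$
for each stratum. Its codimension in $Y_\delta$ is then $\geq m+\bigl[(\delta-1-i)(n-\delta+1-i)+i(m-\delta+1)\bigr]-$ (small correction), which is comfortably at least $3$ using $m\geq n\geq\delta\geq2$. This is the step I expect to be the main obstacle: the bookkeeping in the rank-deficient stratification has to be done carefully, for instance checking the edge case $\delta=2$, where $Y_{\delta-1}=\emptyset$ and the lemma is vacuous, and small $m,n$. Given the bound, this piece contributes at most $(c-1)d+d-3=cd-3$.

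Combining the two cases gives $\dim I_s\leq cd-3$, and hence $\dim X_s\leq cd-3$.
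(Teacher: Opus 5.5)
Your argument is correct and is essentially the paper's proof: the paper uses the same incidence $I_s$ fibered over $Y_{\delta-1}$. It skips your case split by bounding every fiber by the family of $c$-planes containing the line $\overline{P_0Q}$, which has dimension $\dim\Gr(\bP^{c-2},\bP^{N-2})=(c-1)d$, and so gets $\dim I_s\leq \dim Y_{\delta-1}+(c-1)d=cd-(m+n-2\delta+3)\leq cd-3$ directly.

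In particular, your Case~2 ``main obstacle'' is not an obstacle. Since $Y_{\delta-1}\cap T_{P_0}Y_\delta\subseteq Y_{\delta-1}$, it already has dimension at most $d-3$ by your own Case~1 computation, so the stratification can be dropped. (Incidentally, in that stratification the rank-$(\delta-2-i)$ maps $\overline B$ contribute $(m+1)(\delta-2-i)$, not $m(\delta-2-i)$.)
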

\begin{proof}
If $\delta=2$, then $Y_1$ is empty and there is nothing to prove.
Suppose $\delta\geq3$. Consider the closed incidence correspondence
$$\xymatrix{
    & I_s\colonequals\left\{
        (L, P)\in X_0\times Y_{\delta-1}
    \;\middle|\;
        L\ni P
    \right\}\ar[dl]_-{\pi_1}\ar[dr]^-{\pi_2} & \\
    X_0 & & Y_{\delta-1}.
}$$
Since $Y_{\delta-1}$ is projective, $\pi_1$ is proper and its
image $X_s$ is closed.
For each singular point $P\in Y_{\delta-1}$, every $L\in X_s$ containing $P$ must also contain the line spanned by $P_0$ and $P$. If nonempty, the fiber $\pi_2^{-1}(P)$ can be identified with a subset of $\Gr(\bP^{c-2}, \bP^{N-2})$. Therefore,
\begin{align*}
    \dim I_s
    &\leq \dim Y_{\delta-1} + \dim\Gr(\bP^{c-2}, \bP^{N-2}) \\
    &= (N - (m - \delta + 2)(n - \delta + 2)) + (c - 1)(N - c) \\
    &= (d - (m + n - 2\delta + 3)) + (c - 1)d \\
    &= cd - (m + n - 2\delta + 3) \\
    &\leq cd - 3.
\end{align*}
Since $X_s=\pi_1(I_s)$, we have $\dim X_s\leq \dim I_s\leq cd - 3$.
\end{proof}

\subsection{Finite linear sections with extra tangency}

Our next goal is to bound the dimension of the locus $X_t\subseteq X_0$ defined by
$$
    X_t\colonequals\left\{
        L\in\Gr(\bP^c, \bP^N)
    \;\middle|\;
        L\text{ is tangent at }P_0\text{ and some }
        P\in Y_\delta^{\rm sm}\setminus \{P_0\}
    \right\}.
$$
This locus is constructible: it is the image under
$(L,P)\mapsto L$ of the incidence defined by
$P\in L$ and $\dim(L\cap T_PY_\delta)\geq1$ in
$X_0\times(Y_\delta^{\rm sm}\setminus\{P_0\})$.

\begin{lemma}
\label{lem:L-tan-at-tan}
Suppose that $Y_\delta$ is not a hypersurface. Then the locus $X_t'\subseteq X_t$ defined by
$$
    X_t'\colonequals\left\{
        L\in X_t
    \;\middle|\;
        L\text{ is tangent at some }
        P\in
        (Y_\delta^{\rm sm}\cap T_{P_0}Y_\delta)
        \setminus\{P_0\}
    \right\}
$$
has dimension at most $cd-2$.
\end{lemma}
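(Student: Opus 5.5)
We plan to bound $\dim X_t'$ by an incidence-correspondence count, stratified by the strata $T_i$ of Lemma~\ref{lem:Ti}. Consider
$$
    I_t'\colonequals\left\{
        (L,P)\in X_0\times\bigl((Y_\delta^{\rm sm}\cap T_{P_0}Y_\delta)\setminus\{P_0\}\bigr)
    \;\middle|\;
        P\in L,\ \dim(L\cap T_PY_\delta)\geq1
    \right\},
$$
so that $X_t'=\pi_1(I_t')$ and $\dim X_t'\leq\dim I_t'$. We bound $\dim I_t'$ stratum by stratum: for $P\in T_i\setminus\{P_0\}$, we estimate the fiber $\pi_2^{-1}(P)$ and add $\dim T_i$.

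\textbf{Fiber bound.} Fix $P\neq P_0$ in $T_i$. Every $L$ in the fiber contains the line $\overline{P_0P}$, meets $T_{P_0}Y_\delta$ in a line through $P_0$, and meets $T_PY_\delta$ in a line through $P$. Note that $\overline{P_0P}\subseteq T_{P_0}Y_\delta$ since $P\in T_{P_0}Y_\delta$. Hence the condition at $P_0$ is automatically satisfied, and this is exactly why the stratum must be handled separately.

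For $P$ general in $T_i$, we expect $P_0\notin T_PY_\delta$, in the spirit of Lemma~\ref{lem:P0-notin-TPY}, though this must be rechecked on each stratum. In that case the tangency at $P$ is a genuine extra condition. We realize the fiber as the set of $c$-planes $L\supseteq\overline{P_0P}$ whose image in $\bP^{N-2}$ (projecting from the line) meets the image of $T_PY_\delta$, a linear space of dimension $d-1$. This intersection condition has codimension $(N-2)-(c-2)-(d-1)=1$ in $\Gr(\bP^{c-2},\bP^{N-2})$, so
$$
    \dim\pi_2^{-1}(P)\leq (c-1)d-1 .
$$
This gives $\dim I_t'|_{T_i}\leq \dim T_i+(c-1)d-1=cd-1-\operatorname{codim}_{Y_\delta}T_i$.

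\textbf{Case analysis.} By Lemma~\ref{lem:Ti}, outside the hypersurface case every $T_i$ with $i\geq1$ has codimension at least $2$. So those strata contribute at most $cd-3$. The bad-point sublocus where $P_0\in T_PY_\delta$ loses the extra condition, but it is a proper closed subset of the stratum, so it loses at least one more dimension. We need it to have codimension at least $2$ in $Y_\delta$, which we would verify by dimension count.

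The delicate strata are:
\begin{itemize}
\item $T_0$ when $\delta=n=2$, which has codimension $1$.
\item $T_0$ in general, because $\overline{T_0}$ is a linear space through $P_0$ (Lemma~\ref{lem:Ti}).
\end{itemize}
For $P\in T_0$, the line $\overline{P_0P}$ lies in $\overline{T_0}\subseteq Y_\delta$. Hence $L\cap Y_\delta$ is positive-dimensional, and such $L$ are not finite sections at all. Here we would use Lemma~\ref{lem:TP0Y-int-TPY} rather than the crude count above. The tangency condition at $P$ requires a line in $L$ through $P$ inside $T_PY_\delta$. Combined with tangency at $P_0$, the relevant directions lie in $T_{P_0}Y_\delta\cap T_PY_\delta$, which has dimension at most $d-2$. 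This should give one extra codimension in the fiber, pushing the $T_0$ contribution down to at most $cd-2$.

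\textbf{Main obstacle.} We expect the hardest step to be making this $T_0$ (and $\delta=n=2$) estimate precise, since $P\in T_0$ is special rather than general. The plan is to recompute the fiber directly using the explicit block forms of $T_PY_\delta$ for $P$ with $\ker P=\ker P_0$. Concretely, $T_PY_\delta$ consists of matrices $Q$ with $Q(\ker P_0)\subseteq\operatorname{im}P$. We would then count $c$-planes containing $\overline{P_0P}$ that meet both tangent spaces in extra lines. If the count falls short, we would instead discard the lines $\overline{P_0P}\subseteq Y_\delta$ separately, as a closed condition of codimension at least $2$ in $X_0$, by a direct incidence count of $\overline{T_0}$.
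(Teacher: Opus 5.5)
There is a genuine gap at the stratum $T_0$. Your handling of the strata $T_i$ with $i\geq1$ is fine, and simpler than you make it: since $\dim T_i\leq d-2$, the trivial fiber bound $(c-1)d$ already gives $cd-2$, so you never need the unverified claim that the bad sublocus is proper.

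The failure is in the case $\delta=n=2$, the only non-hypersurface case with $\operatorname{codim}T_0=1$. (When $(\delta-1)(n-\delta+1)\geq2$, your crude count $\dim T_0+(c-1)d\leq cd-2$ already works.) For every $P\in T_0$ we have $\ker P=\ker P_0$, hence $P_0(\ker P)=0\subseteq\operatorname{im}P$, i.e.\ $P_0\in T_PY_\delta$. So $\overline{P_0P}\subseteq T_PY_\delta$, and every $c$-plane containing $\overline{P_0P}$ is automatically tangent at $P$. The fiber of $I_t'\to T_0$ over $P$ is therefore all of $\Gr(\bP^{c-2},\bP^{N-2})$, and tangency at $P$ imposes no condition at all. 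Lemma~\ref{lem:TP0Y-int-TPY} cannot rescue this, because it concerns a \emph{general} point of $Y_\delta^{\rm sm}$, and $T_0$ is special. For $\delta=n=2$, one checks that $T_{P_0}Y_\delta\cap T_PY_\delta$ has dimension exactly $d-1$ for every $P\in T_0\setminus\{P_0\}$. Hence $\dim I_t'|_{T_0}=\dim T_0+(c-1)d=cd-1$ when $\delta=n=2$, and no bound of the form $\dim X_t'\leq\dim I_t'$ can give $cd-2$.

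The missing idea is that $I_t'\to X_t'$ has positive-dimensional fibers over $T_0$. Such an $L$ contains the whole line $\overline{P_0P}\subseteq\overline{T_0}$, all but finitely many of whose points lie in $T_0$ and give the same $L$. So you must count lines, not points. Because $\overline{T_0}$ is a linear space through $P_0$, the lines $\overline{P_0P}$ with $P\in T_0$ form a $\bP^{\dim T_0-1}$ with $\dim T_0-1\leq d-2$. This gives a bound of $(c-1)d+(d-2)=cd-2$.

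This is exactly the paper's proof. It ignores tangency at $P$ entirely and uses only that the lines $\overline{P_0P}$, for $P\in(Y_\delta^{\rm sm}\cap T_{P_0}Y_\delta)\setminus\{P_0\}$, form a family of dimension at most $d-2$. This follows from $\dim T_0-1\leq d-2$ for $T_0$ and $\dim T_i\leq d-2$ for $i>0$.

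Your fallback points in this direction, but as stated it is incorrect in two ways. First, these $L$ genuinely lie in $X_t'$, so they must be counted rather than discarded. Second, for $\delta=n=2$ their locus has dimension exactly $cd-2$, i.e.\ codimension one in $X_0$, not two. Codimension one is all the lemma requires.
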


\begin{proof}
For each $P \in Y_\delta^{\mathrm{sm}} \setminus \{P_0\}$, the subspaces $L\in\Gr(\bP^c, \bP^N)$ containing both $P_0$ and $P$ are precisely those that contain the line spanned by $P_0$ and $P$. Such subspaces are parametrized by $\Gr(\bP^{c-2}, \bP^{N-2})$. By Lemma~\ref{lem:Ti}, the intersection
$
    Y_\delta^{\rm sm}\cap T_{P_0}Y_\delta
$
decomposes into the strata $T_i$. Since $\overline{T_0}$ is a
linear space containing $P_0$, the lines spanned by $P_0$ and
points of $T_0\setminus\{P_0\}$ form a family of dimension
$\dim T_0-1\leq d-2$. For each nonempty $T_i$ with $i>0$,
we have $\dim T_i\leq d-2$, so the lines spanned by $P_0$
and points of $T_i$ also form a family of dimension at most
$d-2$. Consequently,
\begin{align*}
    \dim X_t'
    &\leq \dim\Gr(\bP^{c-2}, \bP^{N-2}) + (d-2) \\
    &= (c-1)d + d-2 \\
    &= cd-2.
\end{align*}
This yields the desired bound.
\end{proof}

It remains to analyze the complement
$$
    X_t^\circ\colonequals X_t\setminus X_t'.
$$
For each $L\in X_t^\circ$ and each second tangency point
$P\in Y_\delta^{\rm sm}\setminus\{P_0\}$, there exist lines
$\ell_0\subseteq T_{P_0}Y_\delta$ and
$\ell\subseteq T_PY_\delta$ such that
\begin{equation}
\label{eqn:two-tangency}
    P_0\in\ell_0\subseteq L\cap T_{P_0}Y_\delta
    \qquad\text{and}\qquad
    P\in\ell\subseteq L\cap T_PY_\delta.
\end{equation}
In view of this, we define
\begin{align*}
    \mathcal{B}_0
    &\colonequals\left\{
        \ell_0\in\Gr(\bP^1, T_{P_0}Y_\delta)
    \;\middle|\;
        P_0\in\ell_0
    \right\}
    \cong\bP^{d - 1}, \\
    \mathcal{B}_1
    &\colonequals\left\{
        (P,\ell)\in
        (Y_\delta^{\rm sm}\setminus\{P_0\})
        \times\Gr(\bP^1,\bP^N)
    \;\middle|\;
        P\in\ell\subseteq T_PY_\delta
    \right\}.
\end{align*}
The projection
$\mathcal{B}_1\to Y_\delta^{\rm sm}\setminus\{P_0\}$
is a projective bundle with fiber $\bP^{d-1}$, so
$\dim\mathcal{B}_1=2d-1$. Consider the incidence correspondence
$$
    I_t\subseteq
    X_t^\circ\times\mathcal{B}_0\times\mathcal{B}_1,
$$
consisting of quadruples $(L, \ell_0, (P, \ell))$ satisfying \eqref{eqn:two-tangency}. Each such quadruple satisfies:
\begin{itemize}
    \item The lines $\ell_0$ and $\ell$ are distinct.
    \item If $\ell_0$ and $\ell$ intersect, the intersection point is different from $P$.
\end{itemize}
Indeed, if either of these conditions fails, then $P\in\ell_0\subseteq T_{P_0}Y_\delta$, contradicting the assumption that $L\in X_t^\circ$. Hence $I_t$ decomposes as a disjoint union
$$
    I_t = I_t'\cup I_t'',
$$
where
\begin{align*}
    I_t'\ &= \{
        (L, \ell_0, (P, \ell))\in I_t
    \mid
        \ell_0\cap\ell = \emptyset
    \}, \\
    I_t''\, &= \{
        (L, \ell_0, (P, \ell))\in I_t
    \mid
        \ell_0\cap\ell = \{Q\}\text{ for some }Q\neq P
    \}.
\end{align*}

\begin{lemma}
\label{lem:ext-tan_disjoint}
$\dim I_t'\leq cd - 2$.
\end{lemma}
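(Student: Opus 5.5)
Project $I_t'$ to the data of the two lines: send $(L,\ell_0,(P,\ell))$ to $(\ell_0,(P,\ell))\in\mathcal B_0\times\mathcal B_1$. The image has dimension at most
\[
\dim\mathcal B_0+\dim\mathcal B_1=(d-1)+(2d-1)=3d-2.
\]
For a fixed pair with $\ell_0\cap\ell=\emptyset$, the two skew lines span a $\bP^3$. The fiber consists of $c$-planes $L$ containing this $\bP^3$, so it is contained in $\Gr(\bP^{c-4},\bP^{N-4})$, of dimension $(c-3)(N-c)=(c-3)d$. This requires $c\geq 3$; if $c<3$ the fiber is empty and there is nothing to prove. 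The naive count is then
\[
\dim I_t'\leq (3d-2)+(c-3)d=cd-2,
\]
which is exactly the claimed bound. The plan is therefore just this fiber-dimension computation, following the same pattern as Lemmas~\ref{lem:L-tan-to-P0}, \ref{lem:meet-sing} and \ref{lem:L-tan-at-tan}.

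The steps, in order, are as follows.

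First, verify that $\dim\mathcal B_1=2d-1$; this was noted just before the lemma, since $\mathcal B_1$ is a $\bP^{d-1}$-bundle over $Y_\delta^{\rm sm}\setminus\{P_0\}$.

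Second, observe that the locus of pairs $(\ell_0,(P,\ell))$ with $\ell_0\cap\ell=\emptyset$ is an open subset of $\mathcal B_0\times\mathcal B_1$. Its dimension is therefore at most $3d-2$. No finer information is needed, because the possibly smaller image of $I_t'$ only helps.

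Third, identify the fiber of $I_t'\to\mathcal B_0\times\mathcal B_1$ over such a pair. Conditions \eqref{eqn:two-tangency} say that $\ell_0\subseteq L$ and $\ell\subseteq L$; the remaining requirement $L\in X_t^\circ$ only cuts the fiber down further. Hence the fiber embeds into the Grassmannian of $c$-planes containing the span $\langle\ell_0,\ell\rangle\cong\bP^3$. Projecting from this $\bP^3$ identifies that Grassmannian with $\Gr(\bP^{c-4},\bP^{N-4})$. Its dimension is $(c-3)\bigl((N-4)-(c-4)\bigr)=(c-3)d$, using $N-c=d$.

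Fourth, apply the fiber-dimension inequality $\dim I_t'\leq\dim(\text{image})+\max\dim(\text{fiber})$ to conclude. Constructibility is not an obstacle: $I_t'$ is a constructible subset, being locally closed in the product, and dimension bounds for constructible sets via fibers hold verbatim.

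There is no serious obstacle; the main point to handle carefully is the degenerate range of $c$. When $c\leq 2$, a $c$-plane cannot contain two skew lines, so $I_t'=\emptyset$ and the bound holds trivially. The non-hypersurface assumption gives $c\geq 2$, which is consistent with this case split. Unlike Lemma~\ref{lem:L-tan-at-tan}, this estimate has no slack to spare, but the statement only asks for $cd-2$. The real difficulty in bounding $X_t^\circ$ is deferred to $I_t''$, where the intersecting lines span only a plane. That case should be where Lemmas~\ref{lem:P0-notin-TPY} and \ref{lem:TP0Y-int-TPY} enter, to gain the missing dimensions.
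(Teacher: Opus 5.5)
Your proposal is correct and is essentially the paper's proof: project $I_t'$ to $\mathcal B_0\times\mathcal B_1$ (dimension $3d-2$), bound each fiber by the Grassmannian of $c$-planes containing the $\bP^3$ spanned by the two skew lines (dimension $(c-3)(N-c)=(c-3)d$), and dispose of $c<3$ by emptiness. One harmless imprecision: $I_t'$ is only constructible, not necessarily locally closed, because the condition $L\in X_t^\circ$ is only constructible. This does not affect anything, since, as you yourself note, the fiber-dimension bound holds for constructible sets.
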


\begin{proof}
If $c<3$, then $I_t'=\emptyset$. Suppose $c\geq3$.
Consider the projection map
$$
    \pi'\colon I_t'\longrightarrow
    \mathcal{B}_0\times\mathcal{B}_1
    : (L, \ell_0, (P, \ell))\longmapsto (\ell_0,(P,\ell)).
$$
For each quadruple $(L, \ell_0, (P, \ell))\in I_t'$, the condition
$\ell_0\cap\ell = \emptyset$ implies that their span
$$
    \Pi\colonequals\langle\ell_0, \ell\rangle\subseteq\bP^N
$$
is a $3$-plane. Any $L$ occurring in such a quadruple must
contain $\Pi$. Hence, over a point
$$
    (\ell_0,(P,\ell))\in\mathcal{B}_0\times\mathcal{B}_1,
$$
the fiber $(\pi')^{-1}(\ell_0,(P,\ell))$ is contained in the
Grassmannian of $c$-planes containing $\Pi$, which has
dimension $(c-3)(N-c)$. It follows that
\begin{align*}
    \dim I_t'
    &\leq \dim\mathcal{B}_0+\dim\mathcal{B}_1
        +(c-3)(N-c) \\
    &= (d-1)+(2d-1)+(c-3)(N-c) \\
    &= 3d-2+(c-3)d \\
    &= cd-2.
\end{align*}
This proves the desired upper bound.
\end{proof}

\begin{lemma}
\label{lem:ext-tan_a-point}
$\dim I_t''\leq cd - 2$, under the assumption that $Y_\delta$ is not a hypersurface.
\end{lemma}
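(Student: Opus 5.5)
The plan is to mirror the proof of Lemma~\ref{lem:ext-tan_disjoint}, replacing the $3$-plane there by a $2$-plane and compensating with a sharper count of the pairs of lines. For $(L,\ell_0,(P,\ell))\in I_t''$, the lines $\ell_0$ and $\ell$ are distinct and meet in a point $Q\neq P$. Hence their span $\Pi=\langle\ell_0,\ell\rangle$ is a $2$-plane contained in $L$. The $c$-planes containing a fixed $2$-plane form a Grassmannian of dimension $(c-2)(N-c)=(c-2)d$. Consider the projection $I_t''\to\mathcal B_0\times\mathcal B_1$, and let $\mathcal E\subseteq\mathcal B_0\times\mathcal B_1$ be its image. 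It then suffices to show
\[
    \dim\mathcal E\leq 2d-2,
\]
since then $\dim I_t''\leq 2d-2+(c-2)d=cd-2$.

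To bound $\mathcal E$, parametrize a pair $(\ell_0,(P,\ell))$ by the triple $(P,Q,\cdot)$. The point $Q$ lies in $\ell_0\cap\ell\subseteq T_{P_0}Y_\delta\cap T_PY_\delta$. First record a constraint on $P$. Because $L\in X_t^\circ$, the second tangency point $P$ does not lie in $T_{P_0}Y_\delta$ (this is exactly the condition excluded by $X_t'$). Therefore $T_PY_\delta\neq T_{P_0}Y_\delta$, and so
\[
    \dim(T_{P_0}Y_\delta\cap T_PY_\delta)\leq d-1
\]
for every relevant $P$. Now split into two cases according to $Q$.

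\emph{Case $Q\neq P_0$.} Here $\ell_0=\overline{P_0Q}$ and $\ell=\overline{PQ}$ are determined by $(P,Q)$. So this part of $\mathcal E$ has dimension at most $\dim\{(P,Q)\mid Q\in T_{P_0}Y_\delta\cap T_PY_\delta\}$. Stratify $Y_\delta^{\rm sm}$ by the value of $\dim(T_{P_0}Y_\delta\cap T_PY_\delta)$, which is upper semicontinuous. By Lemma~\ref{lem:TP0Y-int-TPY}, since $Y_\delta$ is not a hypersurface, there is a dense open set of $P$ on which this dimension is at most $d-2$; this contributes at most $d+(d-2)=2d-2$. The complementary closed set has dimension at most $d-1$, and on it the intersection has dimension at most $d-1$ by the constraint above. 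This again contributes at most $2d-2$.

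\emph{Case $Q=P_0$.} Then $P_0\in\ell\subseteq T_PY_\delta$, so $\ell=\overline{PP_0}$ is determined by $P$, while $\ell_0\in\mathcal B_0$ is arbitrary, giving $d-1$ parameters. By Lemma~\ref{lem:P0-notin-TPY}, the locus of $P$ with $P_0\in T_PY_\delta$ is a proper closed subset of the irreducible $Y_\delta^{\rm sm}$, hence of dimension at most $d-1$. This case therefore contributes at most $(d-1)+(d-1)=2d-2$.

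Combining the strata gives $\dim\mathcal E\leq 2d-2$ and hence $\dim I_t''\leq cd-2$. I expect the main obstacle to be the careful bookkeeping on the non-generic loci. The key point is that the exclusion of $X_t'$ forces $P\notin T_{P_0}Y_\delta$, which rules out $T_PY_\delta=T_{P_0}Y_\delta$. Without that exclusion, a special locus of dimension $d-1$ with a $d$-dimensional intersection would break the bound. One must also check that the constructible stratifications are compatible with taking dimensions of images, which is routine.
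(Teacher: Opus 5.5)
Your proposal is correct and follows essentially the same argument as the paper. Your cases $Q=P_0$ and $Q\neq P_0$ are the paper's $J_0=\iota^{-1}(P_0)$ and $J_1$. The first is bounded by Lemma~\ref{lem:P0-notin-TPY}. The second is bounded by Lemma~\ref{lem:TP0Y-int-TPY} together with $\dim(T_{P_0}Y_\delta\cap T_PY_\delta)\leq d-1$, which comes from excluding $X_t'$. The only difference is bookkeeping: you bound the image in $\mathcal B_0\times\mathcal B_1$ by $2d-2$ and then add the $(c-2)d$-dimensional family of $c$-planes through $\langle\ell_0,\ell\rangle$, whereas the paper fibers over the point $P$ and counts the choice of $L$ inside each fiber.
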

\begin{proof}
Assigning to each $(L, \ell_0, (P, \ell))\in I_t''$ the intersection point $\ell_0\cap\ell$ defines a morphism
$$
    \iota\colon I_t''\longrightarrow\bP^N
    : (L, \ell_0, (P, \ell))\longmapsto\ell_0\cap\ell.
$$
Let $J_0$ denote the fiber over $P_0$ and $J_1$ its complement. That is,
$$
    J_0 \colonequals \iota^{-1}(P_0), \qquad
    J_1 \colonequals I_t''\setminus J_0.
$$

We first bound the dimension of $J_0$. Fix a point $P\in Y_\delta^{\rm sm}$ in the image of the projection
$$
    \pi\colon J_0\longrightarrow Y_\delta^{\rm sm}
    : (L, \ell_0, (P, \ell))\longmapsto P.
$$
The defining condition of $I_t''$ implies $P_0 = \ell_0\cap\ell\neq P$. Thus, for any $(L, \ell_0, (P, \ell))\in\pi^{-1}(P)$,
\begin{itemize}
\item $\ell$ is uniquely determined as the line through $P_0$ and $P$,
\item $\ell_0$ is a line in $T_{P_0}Y_\delta$ passing through $P_0$, and
\item $L$ is a $c$-dimensional linear subspace containing the plane spanned by $\ell_0$ and $\ell$.
\end{itemize}
The $c$-planes containing a fixed plane form a family of dimension $(c-2)d$; for $c=2$, the only member is the plane itself. Thus, the above conditions imply that
$$
\dim\pi^{-1}(P)\leq (c-2)d+(d-1).
$$
Moreover, we have
$
    P_0\in\ell\subseteq T_PY_\delta,
$
so in particular $T_PY_\delta$ contains $P_0$. By Lemma~\ref{lem:P0-notin-TPY}, the locus of such points $P\in Y_\delta^{\rm sm}$ has dimension at most $d - 1$. Combining this with the bound for each nonempty fiber, we obtain
$$
    \dim J_0\leq \bigl((c-2)d+(d-1)\bigr)+(d-1) 
    = cd - 2.
$$

Next, we bound the dimension of $J_1$. Consider the projection
$$
    \rho\colon J_1\longrightarrow Y_\delta^{\rm sm}
    : (L, \ell_0, (P, \ell))\longmapsto P.
$$
For each $P\in Y_\delta^{\rm sm}$, the restriction of $\iota$ to the fiber $\rho^{-1}(P)\subseteq J_1$ defines a map
$$
    \iota_P\colonequals\iota|_{\rho^{-1}(P)}
    \colon
    \rho^{-1}(P)\longrightarrow
    T_{P_0}Y_\delta\cap T_PY_\delta\cong\bP^e.
$$
By the defining conditions of $I_t''$ and $J_1$, we have
$$
    P_0\neq\ell_0\cap\ell\neq P.
$$
Thus, for each $Q\in\operatorname{im}(\iota_P)$, the fiber $\iota_P^{-1}(Q)$ consists of $(L, \ell_0, (P, \ell))\in\rho^{-1}(P)$ such that 
\begin{itemize}
\item $\ell_0$ is uniquely determined as the line through $P_0$ and $Q$,
\item $\ell$ is uniquely determined as the line through $P$ and $Q$, and
\item $L$ is a $c$-dimensional linear subspace containing the plane spanned by $\ell_0$ and $\ell$.
\end{itemize}
These imply that each nonempty fiber satisfies
$$
    \dim\rho^{-1}(P)\leq(c-2)d+e.
$$
For $P\in\operatorname{im}(\rho)$, the definition of $X_t^\circ$
gives $P\notin T_{P_0}Y_\delta$. Since $P\in T_PY_\delta$,
the two tangent spaces are distinct, so $e\leq d-1$.

Consider the locus
$$
    Z\colonequals\left\{
        P\in Y_\delta^{\rm sm}
    \;\middle|\;
        \dim\left(T_{P_0}Y_\delta\cap T_PY_\delta\right)
        \geq d-1
    \right\}.
$$
Since $Y_\delta$ is not a hypersurface,
Lemma~\ref{lem:TP0Y-int-TPY} shows that $Z$ is a proper subset of $Y_\delta^{\rm sm}$. It is closed by upper semicontinuity, so $\dim Z\leq d-1$. Consequently,
\begin{align*}
    \dim\rho^{-1}(Y_\delta^{\rm sm}\setminus Z)
    &\leq d+(c-2)d+(d-2)=cd-2,\\
    \dim\rho^{-1}(Z)
    &\leq(d-1)+(c-2)d+(d-1)=cd-2.
\end{align*}
Therefore $\dim J_1\leq cd-2$. It follows that
$$
    \dim I_t'' \leq \max\{\dim J_0,\, \dim J_1\}\leq cd - 2,
$$
which proves the desired bound.
\end{proof}

\begin{lemma}
\label{lem:ext-tan}
$\dim X_t\leq cd - 2$, under the assumption that $Y_\delta$ is not a hypersurface.
\end{lemma}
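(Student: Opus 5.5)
We decompose $X_t=X_t'\cup X_t^\circ$, where $X_t^\circ=X_t\setminus X_t'$ as above, and bound each piece separately. For $X_t'$, Lemma~\ref{lem:L-tan-at-tan} already gives $\dim X_t'\leq cd-2$, since $Y_\delta$ is not a hypersurface. It therefore remains to show that $\dim X_t^\circ\leq cd-2$.

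For $X_t^\circ$ we use the incidence correspondence $I_t$ and its first projection
$$
    p\colon I_t\longrightarrow X_t^\circ,\qquad (L,\ell_0,(P,\ell))\longmapsto L.
$$
This projection is surjective. Take $L\in X_t^\circ$ and choose a second tangency point $P\in Y_\delta^{\rm sm}\setminus\{P_0\}$. By the definition of tangency, $L\cap T_{P_0}Y_\delta$ contains a line $\ell_0$ through $P_0$, and $L\cap T_PY_\delta$ contains a line $\ell$ through $P$. These satisfy \eqref{eqn:two-tangency}, so $(L,\ell_0,(P,\ell))\in I_t$. Since images of constructible sets do not increase dimension, we get
$$
    \dim X_t^\circ\leq\dim I_t=\max\{\dim I_t',\dim I_t''\}.
$$
Lemmas~\ref{lem:ext-tan_disjoint} and~\ref{lem:ext-tan_a-point} bound both terms by $cd-2$, so $\dim X_t^\circ\leq cd-2$.

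Combining the two pieces, $\dim X_t\leq\max\{\dim X_t',\dim X_t^\circ\}\leq cd-2$. Here dimension means the dimension of the Zariski closure, and the closure of a finite union of constructible sets is the union of their closures.

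The only point needing care is a bookkeeping one. The incidence $I_t$ is defined over $X_t^\circ$ rather than over $X_t$, and the decomposition $I_t=I_t'\cup I_t''$ relies on the observation that, for $L\in X_t^\circ$, we have $P\notin\ell_0$ and $\ell_0\neq\ell$. For this reason the splitting into $X_t'$ and $X_t^\circ$ must come before the use of the incidence. There is no further difficulty: the statement is a direct assembly of Lemmas~\ref{lem:L-tan-at-tan}, \ref{lem:ext-tan_disjoint}, and~\ref{lem:ext-tan_a-point}.
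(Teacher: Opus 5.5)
Your proposal is correct and follows the paper's proof: you use Lemma~\ref{lem:L-tan-at-tan} to handle $X_t'$, then bound $X_t^\circ$ through the surjection $I_t\to X_t^\circ$ together with Lemmas~\ref{lem:ext-tan_disjoint} and~\ref{lem:ext-tan_a-point}. You are also right that the splitting off of $X_t'$ must come first, because the decomposition $I_t=I_t'\cup I_t''$ relies on $P\notin T_{P_0}Y_\delta$.
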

\begin{proof}
Due to Lemma~\ref{lem:L-tan-at-tan}, it remains to prove that
$$
    \dim X_t^\circ\leq cd - 2.
$$
Each $L\in X_t^\circ$ is tangent at both $P_0$ and some point $P\in Y_\delta^{\mathrm{sm}}\setminus \{P_0\}$. Hence there exist lines
$$
    \ell_0\subseteq L\cap T_{P_0}Y_\delta
    \qquad\text{and}\qquad
    \ell\subseteq L\cap T_PY_\delta
$$
passing through $P_0$ and $P$, respectively. This shows that the projection map
$$
    I_t\longrightarrow X_t^\circ
    : (L, \ell_0, (P, \ell))\longmapsto L
$$
is surjective. It follows that
$$
    \dim X_t^\circ\leq \dim I_t
    \leq \max\{\dim I_t',\, \dim I_t''\},
$$
which is at most $cd - 2$ by Lemmas~\ref{lem:ext-tan_disjoint} and \ref{lem:ext-tan_a-point}.
\end{proof}

\begin{thm}
\label{thm:det-var_quasireflex}
Every determinantal variety $Y_\delta\subseteq\bP^N$ is quasireflexive. That is, there exists a linear subspace $L\subseteq\bP^N$ of dimension $N - \dim Y_\delta$ that intersects $Y_\delta$ in $\deg(Y_\delta) - 1$ points at which $Y_\delta$ is smooth.
\end{thm}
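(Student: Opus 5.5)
The hypersurface case $m=n=\delta$ is Proposition~\ref{prop:det-hyper}, so assume $Y_\delta$ is not a hypersurface. The plan is to find a point of $X_0$ avoiding the three bad loci identified above: tangency of multiplicity greater than two at $P_0$, contact with the singular locus, and a second tangency. By Lemma~\ref{lem:L-tan-to-P0}, $X_0$ is irreducible of dimension $cd-1$. By Lemma~\ref{lem:multiplicity-2-open} it contains a nonempty open set $U$ on which $P_0$ is an isolated double point of $L\cap Y_\delta$. Lemma~\ref{lem:meet-sing} gives that the closed locus $X_s$ has dimension at most $cd-3$, and Lemma~\ref{lem:ext-tan} gives that the constructible locus $X_t$ has dimension at most $cd-2$. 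Hence $\overline{X_s}\cup\overline{X_t}$ is a proper closed subset of the irreducible variety $X_0$. Since $U$ is dense, there is some $L\in U\setminus(X_s\cup\overline{X_t})$.

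Next I check that such an $L$ meets $Y_\delta$ in a finite scheme. Since $L\notin X_s$, we have $L\cap Y_\delta\subseteq Y_\delta^{\rm sm}$. If $L\cap Y_\delta$ had a positive-dimensional component $Z$, I would argue as follows. A curve in $L\cap Y_\delta^{\rm sm}$ has, at each of its points, a tangent line lying in $L\cap T_PY_\delta$. So every point of $Z$ is a point of tangency. If $Z\neq\{P_0\}$ then $Z$ contains points other than $P_0$, which forces $L\in X_t$, a contradiction. (Note that $P_0$ is already isolated because $L\in U$.) Therefore $Z:=L\cap Y_\delta$ is zero-dimensional, and $\dim L=c=N-d$.

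Finally I count points. At $P_0$ the local length is exactly $2$. At every other point $P\in Z$, the point $P$ is smooth on $Y_\delta$ and $L$ is not tangent there (as $L\notin X_t$). So $L\cap T_PY_\delta=\{P\}$, and the local scheme is reduced of length one. B\'ezout's theorem, as stated after Definition~\ref{def:quasireflex}, gives total length $\Delta=\deg Y_\delta$. Hence the support of $Z$ consists of $P_0$ together with $\Delta-2$ transverse points, that is, $\Delta-1$ distinct smooth points.

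Degree is at least two here since a determinantal variety with $\delta\le n$ is not linear, and this is needed for the definition. The main subtlety I expect is the finiteness step. It needs care that a positive-dimensional component through $P_0$ is excluded by the isolatedness coming from $U$, while any other component yields a tangency at a point different from $P_0$. It also needs care that closures of the constructible set $X_t$ do not raise its dimension, so that avoiding $\overline{X_t}$ is legitimate.
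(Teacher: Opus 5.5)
Your proposal is correct and follows essentially the same route as the paper. Like the paper, you treat the hypersurface case via Proposition~\ref{prop:det-hyper}, then choose $L\in U\setminus(X_s\cup\overline{X_t})$ using Lemmas~\ref{lem:L-tan-to-P0}, \ref{lem:multiplicity-2-open}, \ref{lem:meet-sing}, and \ref{lem:ext-tan}, and conclude by B\'ezout, with $P_0$ contributing length two and every other point length one. Your explicit argument that a positive-dimensional component of $L\cap Y_\delta$ would create a second tangency, forcing $L\in X_t$, simply spells out the paper's brief remark that all intersection points are isolated.
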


\begin{proof}
The hypersurface case is proved in Proposition~\ref{prop:det-hyper}. Assume that $Y_\delta$ is not a hypersurface. By Lemma~\ref{lem:multiplicity-2-open}, there exists a Zariski open subset $U\subseteq X_0$ such that $Y_{\delta}\cap L$ has multiplicity exactly two at $P_0$ for each $L\in U$.

The constructible locus $X_t$ and its closure have the same dimension. By Lemmas~\ref{lem:L-tan-to-P0}, \ref{lem:meet-sing}
and \ref{lem:ext-tan}, $X_s\cup\overline{X_t}$ is a proper closed subset of the irreducible variety $X_0$. Choose any
$$
    L\in U\setminus(X_s\cup\overline{X_t}).
$$
Then the intersection $L\cap Y_\delta$ is disjoint from the singular locus $Y_{\delta-1}$ and is transverse away from the double point $P_0$. Since all intersection points are isolated, $L\cap Y_\delta$ is finite. By B\'ezout's theorem, the total intersection multiplicity is $\deg(Y_\delta)$. The point $P_0$ contributes two, whereas every other intersection point contributes one. Hence $L\cap Y_\delta$ is supported on exactly $\deg(Y_\delta)-1$ distinct smooth points, as desired.
\end{proof}

\appendix
\section{Reflexivity of determinantal varieties}
\label{sec:reflexive-det}

For irreducible projective curves of degree at least two, reflexivity and quasireflexivity are equivalent in characteristic different from $2$ \cite{Ent21}*{Proposition~2.1}. In higher dimensions, following the second paragraph of the proof of \cite{BH86}*{Theorem at p.~906}, one deduces that:

\begin{prop}
\label{prop:ref-versus-quasiref}
Let $K$ be an algebraically closed field of characteristic
different from $2$, and let $Y\subseteq\bP^N_K$ be an irreducible
projective variety of degree at least two. If $Y$ is reflexive,
then $Y$ is quasireflexive.
\end{prop}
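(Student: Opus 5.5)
The plan follows the classical argument of Bhattacharya–Harris (the second paragraph of the proof of their theorem), transplanted to linear sections of complementary dimension. Set $d=\dim Y$, $c=N-d$, and write $Y^\vee\subseteq(\bP^N)^\vee$ for the dual variety. Reflexivity means the conormal variety $\mathcal C_Y=\overline{\{(y,H)\mid y\in Y^{\rm sm},\,T_yY\subseteq H\}}$ is identified with $\mathcal C_{Y^\vee}$ under the swap of factors. Its immediate consequence is that the Gauss-type map $\mathcal C_Y\to Y^\vee$ is generically unramified, so a general tangent hyperplane $H$ is tangent to $Y$ at a single smooth point $y_0$, the contact locus being the linear space $(T_HY^\vee)^\perp\cap Y$.

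\textbf{Step 1: control a single tangency.} I would choose $y_0\in Y^{\rm sm}$ general and a general tangent hyperplane $H\supseteq T_{y_0}Y$, so that $H$ is a smooth point of $Y^\vee$ tangent to $Y$ only along its contact locus. In characteristic $\neq2$, reflexivity also forces the Hessian (second fundamental form) quadric of $Y\cap H$ at $y_0$ to be nondegenerate on the relevant space when the contact locus is $\{y_0\}$. When the dual is defective, I would restrict to a general linear section through $y_0$ of dimension equal to $\dim Y^\vee$ relative to the contact locus. Concretely, I would reduce to a curve. Take a general $(c+1)$-plane $M$ through $y_0$, so that $C=M\cap Y$ is an irreducible reduced curve, smooth at $y_0$, by Bertini. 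Reflexivity is preserved under general linear sections in characteristic $\neq 2$ (via the Monge–Segre–Wallace criterion), so $C\subseteq M\cong\bP^{c+1}$ is a reflexive curve of degree $\Delta\geq2$.

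\textbf{Step 2: apply the curve case.} For curves, Entin's \cite{Ent21}*{Proposition~2.1} gives quasireflexivity of $C$ in characteristic $\neq2$. That is, there is a hyperplane $L$ of $M$, hence a $c$-plane of $\bP^N$, meeting $C$ in $\Delta-1$ points: one simple tangency and transverse intersections elsewhere. Since $L\subseteq M$, we have $L\cap Y=L\cap C$ scheme-theoretically, so $L\cap Y$ is zero-dimensional with support $\Delta-1$ points. These points lie in $Y^{\rm sm}$ after arranging, by generality of $M$, that $C$ avoids $Y^{\rm sing}$; this is possible when $\dim Y^{\rm sing}<d-1$, and otherwise I would use that the chosen $L$ can be perturbed within an open set.

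\textbf{Main obstacle.} The hard step is Step 1's claim that reflexivity descends to the general curve section $C$, together with smoothness of the intersection points. I would verify this via Monge–Segre–Wallace: reflexivity is equivalent to separability of the conormal map $\mathcal C_Y\to Y^\vee$, and for a general $M$ the conormal of $C$ in $M$ is a projection of an open part of $\mathcal C_Y$, so its separability follows from that of $\mathcal C_Y$. If $Y^{\rm sing}$ is a divisor, the curve cannot avoid it. In that case I would instead argue directly: the set of $c$-planes meeting $Y^{\rm sing}$ has codimension $\geq1$ in the relevant family of tangent $c$-planes, and I would choose $L$ outside it, mirroring Lemma~\ref{lem:meet-sing}.
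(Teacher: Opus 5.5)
\textbf{There is a genuine gap in the step you flag as the main obstacle.} The paper gives no argument of its own here; it only points to the second paragraph of the proof on p.~906 of \cite{BH86} (Ballico--Hefez). Your strategy is workable: cut by a general $(c+1)$-plane $M$ and apply \cite{Ent21}*{Proposition~2.1} to $C=M\cap Y$. But the descent of reflexivity to $C$ is not established, and the justification you give does not work.

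The conormal variety of $C$ in $M$ is not a projection of an open part of $\mathcal C_Y$. It is birational to the \emph{closed} subvariety $\pi_1^{-1}(C)\subseteq\mathcal C_Y$, which has codimension $d-1$, composed with the linear projection $(\bP^N)^\vee\dashrightarrow M^\vee$ from $M^\perp$. Separability is not inherited under restriction to a closed subvariety followed by a projection.

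Your sketch also never uses $\operatorname{char}K\neq 2$, yet the descent genuinely fails in characteristic $2$. The quadric $xy+zw=0$ in $\bP^3$ has Gauss map $(x:y:z:w)\mapsto(y:x:w:z)$, an isomorphism onto the dual quadric, so it is reflexive. Its general plane sections, however, are smooth conics, which in characteristic $2$ are strange and therefore not reflexive. Any correct proof of this step must therefore use the characteristic.

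One way to close the gap is through the second fundamental form.
\begin{enumerate}
\item At a general $(y,H)\in\mathcal C_Y$, the differential of $\mathcal C_Y\to(\bP^N)^\vee$ has rank $(c-1)+\operatorname{rank}\mathrm{II}_H$, where $\mathrm{II}_H$ is the second fundamental form of $Y$ at $y$ in the conormal direction $H$.
\item By Monge--Segre--Wallace, reflexivity makes this rank equal to $\dim Y^\vee$.
\item We have $\dim Y^\vee\geq c$. Otherwise $Y^\vee$ would be the single $\bP^{c-1}$ of hyperplanes containing every general tangent space, forcing $Y$ to be linear, which contradicts $\deg Y\geq 2$. Hence $\mathrm{II}_H\neq 0$.
\item Since $\operatorname{char}K\neq 2$, polarization gives $\mathrm{II}_H(v,v)\neq 0$ for a general direction $v\in T_yY$.
\item For general $M$ and general $y\in C$, the tangent line $T_yC=T_yY\cap M$ is such a general direction. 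Moreover, $H\mapsto H\cap M$ identifies hyperplanes containing $T_yY$ with hyperplanes of $M$ containing $T_yC$.
\item So a general tangent hyperplane of $C$ has contact order exactly two. Hence $\mathcal C_C\to C^\vee$ is generically unramified, therefore generically \'etale, and $C$ is reflexive.
\end{enumerate}

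With this in place, Step~2 goes through, with one more ingredient. Entin's proposition as stated only gives the existence of one good tangent hyperplane. To avoid the finitely many points of $C\cap Y^{\rm sing}$, you need that a \emph{general} point of $C^\vee$ is a good tangent hyperplane, which reflexivity of $C$ does provide.

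Separately, your opening claim that $\mathcal C_Y\to Y^\vee$ is generically unramified is false for dual-defective $Y$. An example is $Y_\delta$ with $m>n$, whose dual $Y_{n-\delta+2}$ has codimension at least two. Your curve route does not need that claim.
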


In what follows, we prove that determinantal varieties defined over a field of arbitrary characteristic are reflexive. By Proposition~\ref{prop:ref-versus-quasiref}, this already implies their quasireflexivity in characteristic different from $2$. However, it does not address the case of characteristic $2$, which is of particular importance in coding theory.

\bigskip

We begin by recalling the definition of reflexivity. Let $Y\subseteq\bP^N$ be a projective variety defined over an algebraically closed field $K$ of arbitrary characteristic. Its \emph{conormal variety} $C(Y)\subseteq\bP^N\times\left(\bP^N\right)^*$ is the Zariski closure of the set of pairs $(P, H)\in\bP^N\times\left(\bP^N\right)^*$ where
\begin{itemize}
\item $P$ belongs to the smooth locus $Y^{\rm sm}\subseteq Y$, and
\item $H$ is a hyperplane in $\bP^N$ containing the tangent space $T_PY\subseteq\bP^N$.
\end{itemize}
Consider the two projection maps
$$\xymatrix@R=12pt@C=12pt{
    & C(Y)\ar[dl]_-{\pi_1}\ar[dr]^-{\pi_2} & \\
    Y && \left(\bP^N\right)^*.
}$$
The \emph{dual variety} of $Y$ is the image
$Y^*\colonequals\pi_2(C(Y))\subseteq(\bP^N)^*$
of the second projection, and $Y$ is \emph{reflexive} if $C(Y)\cong C(Y^{\ast})$ under the natural isomorphisms
\begin{equation}
\label{eqn:reflex}
    \bP^N \times\left(\bP^N\right)^*
    \cong 
    \left(\bP^N\right)^*\times\bP^N
    \cong
    \left(\bP^N\right)^*\times\left(\bP^N\right)^{**}.
\end{equation}

Let $Y_\delta\subseteq\bP^N = \bP^{mn-1}$ be the determinantal
variety over $K$ consisting of $m\times n$ matrices of rank
$<\delta$, where $2\leq\delta\leq n\leq m$. Recall that the tangent space at a smooth point $P\in Y_\delta^{\rm sm}$ is given by
$$
    T_PY_\delta = \left\{
        Q\in\bP^N
    \;\middle|\;
       Q(\ker P)
       \subseteq
       \operatorname{im}P
    \right\}.
$$
On the other hand, there is a nondegenerate bilinear form 
$$
    \langle -,-\rangle\colon
    K^{mn}\times K^{mn}
    \longrightarrow K,
    \quad (A, B)\longmapsto {\rm tr}(AB^T).
$$
In coordinates, if we write $A = (a_{ij})$ and $B = (b_{ij})$, then $\langle A, B\rangle = \sum_{i,j} a_{ij}b_{ij}$. We identify $(\bP^N)^*$ with $\bP^N$ via the induced isomorphism, which is explicitly given by
$$\xymatrix{
    \bP^N\ar[r]^-\sim & (\bP^N)^*
    : P\ar@{|->}[r] & H_P = \left\{
        \sum_{i,j} p_{ij}x_{ij} = 0
    \right\}
}$$
where $(p_{ij})$ is any matrix representing $P$.

\begin{lemma}
\label{lem:local_calculation}
Let $I_{\delta-1}$ denote the $(\delta-1)\times(\delta-1)$ identity matrix. The fiber over
$$
    P_0 = \begin{pmatrix}
        I_{\delta-1} & 0 \\
        0 & 0
    \end{pmatrix}\in Y_\delta
$$
under the projection $\pi_1\colon C(Y_\delta)\longrightarrow Y_\delta$ has the form
$$
    \pi_1^{-1}(P_0) = \left\{\left(
        P_0,
        \begin{pmatrix}
            0 & 0 \\
            0 & \ast_{m-\delta+1,\, n-\delta+1}
    \end{pmatrix}\right)\right\}
$$
where $\ast_{\alpha,\, \beta}$ ranges over nonzero
$\alpha\times\beta$ matrices, considered up to scalar. In particular,
$$
    \pi_2\left(
        \pi_1^{-1}(P_0)
    \right)\subseteq Y_{n-\delta+2}.
$$
\end{lemma}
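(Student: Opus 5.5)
Since $P_0$ has rank exactly $\delta-1$, it lies in the smooth locus $Y_\delta^{\rm sm}$. Near $P_0$ the conormal variety is therefore the closure of a projective bundle over $Y_\delta^{\rm sm}$, with fibers the hyperplanes containing the tangent space. The plan is to show that $\pi_1^{-1}(P_0)$ is exactly the set of hyperplanes containing $T_{P_0}Y_\delta$. Then we use the trace pairing to identify these hyperplanes with matrices.

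First, we justify that taking the closure adds nothing over $P_0$. The set $\{(P,H)\mid P\in Y_\delta^{\rm sm},\ H\supseteq T_PY_\delta\}$ is closed in $Y_\delta^{\rm sm}\times(\bP^N)^*$. The reason is that $P\mapsto T_PY_\delta$ is a morphism from $Y_\delta^{\rm sm}$ to a Grassmannian, since the tangent spaces have constant dimension $d$. The incidence condition $H\supseteq T_PY_\delta$ is closed. Hence the closure $C(Y_\delta)$ restricted over the open set $Y_\delta^{\rm sm}$ is this set itself, and
$$\pi_1^{-1}(P_0)=\{(P_0,H)\mid H\supseteq T_{P_0}Y_\delta\}.$$

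Second, we compute the hyperplanes. By \eqref{eqn:repre_TP0Y}, $T_{P_0}Y_\delta$ is the linear span of those matrix units $E_{ij}$ not lying in the bottom-right $(m-\delta+1)\times(n-\delta+1)$ block. A hyperplane $H_Q=\{\sum q_{ij}x_{ij}=0\}$ contains $T_{P_0}Y_\delta$ if and only if it vanishes on each such $E_{ij}$. This means $q_{ij}=0$ for every $(i,j)$ outside that block. So $Q$ has the stated block form, with an arbitrary nonzero bottom-right block taken up to scalar.

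Finally, such a $Q$ has rank at most $\min(m-\delta+1,n-\delta+1)=n-\delta+1$, because $n\le m$. Since $n-\delta+1<n-\delta+2$, every such $Q$ lies in $Y_{n-\delta+2}$, the locus of matrices of rank less than $n-\delta+2$.

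The only step requiring care is the first one: that no extra limiting hyperplanes appear over the smooth point $P_0$. This rests on the continuity of the Gauss map on the smooth locus, which is standard.
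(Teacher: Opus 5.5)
Your proposal is correct and follows essentially the same route as the paper: identify hyperplanes $H_Q$ via the trace pairing, note that containing $T_{P_0}Y_\delta$ forces $Q$ to vanish outside the bottom-right block, and conclude from $\operatorname{rank}Q\leq n-\delta+1$ that $Q\in Y_{n-\delta+2}$. The one addition is your first step, which explains why the closure defining $C(Y_\delta)$ adds no limiting hyperplanes over the smooth point $P_0$: the Gauss map is a morphism on $Y_\delta^{\rm sm}$, so the incidence set is already closed there. The paper uses this fact without comment.
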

\begin{proof}
Every $Q\in T_{P_0}Y_\delta$ is represented by a matrix of the form
$$
    \begin{pmatrix}
        \ast_{\delta-1,\,\delta-1}
        & \ast_{\delta-1,\,n-\delta+1} \\
        \ast_{m-\delta+1,\,\delta-1}
        & 0
    \end{pmatrix}.
$$
Hence, $H_P$ contains $T_{P_0}Y_\delta$ if and only if $P$ is represented by
$$
    \widetilde{P} = \begin{pmatrix}
        0 & 0 \\
        0 & \ast_{m-\delta+1,\,n-\delta+1}
    \end{pmatrix}.
$$
Since $(P_0, P)\in\bP^N\times(\bP^N)^*$ belongs to $\pi_1^{-1}(P_0)$ if and only if $T_{P_0}Y_\delta\subseteq H_P$, this proves the first statement. The second statement follows immediately from the expression of $\widetilde{P}$.
\end{proof}

\begin{lemma}
\label{lem:inv-tr}
For any $P\in Y_\delta^{\rm sm}$, $Q\in\bP^N$,
$A\in\GL_m(K)$ and $B\in\GL_n(K)$,
$$
    T_PY_\delta\subseteq H_Q
    \;\Longleftrightarrow\;
    T_{APB}Y_\delta
    \subseteq
    H_{A^\dagger Q B^\dagger}
$$
where $A^\dagger = (A^{-1})^T$ and $B^\dagger = (B^{-1})^T$.
\end{lemma}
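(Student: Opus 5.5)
The plan is to prove two things: the tangent space transforms as $T_{APB}Y_\delta = A\,(T_PY_\delta)\,B$, and the pairing $\langle -,-\rangle$ is invariant in the form $\langle AXB, A^\dagger Q B^\dagger\rangle = \langle X, Q\rangle$. Combining the two gives the equivalence directly.

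First, observe that $P\mapsto APB$ is a linear automorphism of $\bP^N$ preserving ranks, hence an automorphism of $Y_\delta$ mapping $Y_\delta^{\rm sm}$ to itself. For the tangent space, I would argue directly from the description
\[
T_PY_\delta=\{Q \mid Q(\ker P)\subseteq\operatorname{im}P\}.
\]
We have $\ker(APB)=B^{-1}\ker P$ and $\operatorname{im}(APB)=A\operatorname{im}P$. Hence $Q'$ lies in $T_{APB}Y_\delta$ if and only if $Q'B^{-1}\ker P\subseteq A\operatorname{im}P$. That is equivalent to $A^{-1}Q'B^{-1}(\ker P)\subseteq \operatorname{im}P$, i.e.\ $A^{-1}Q'B^{-1}\in T_PY_\delta$. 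Therefore $T_{APB}Y_\delta=A\,T_PY_\delta\,B$.

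Second, for the pairing we use cyclicity of the trace:
\[
\operatorname{tr}\bigl(AXB\,(A^\dagger QB^\dagger)^T\bigr)=\operatorname{tr}\bigl(AXB\,B^{-1}Q^TA^{-1}\bigr)=\operatorname{tr}(XQ^T).
\]
So $X\in H_Q$ if and only if $AXB\in H_{A^\dagger QB^\dagger}$. In other words, the linear map $X\mapsto AXB$ carries $H_Q$ onto $H_{A^\dagger QB^\dagger}$.

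Putting these together: $T_PY_\delta\subseteq H_Q$ holds if and only if $A\,T_PY_\delta\,B\subseteq A\,H_Q\,B$. By the two computations, this is the statement $T_{APB}Y_\delta\subseteq H_{A^\dagger QB^\dagger}$. All maps involved are invertible, so both directions hold.

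None of these steps is difficult. The only point needing care is the transpose/inverse bookkeeping in the pairing, namely checking that $(B^\dagger)^T=B^{-1}$ and $(A^\dagger)^T=A^{-1}$ appear in the correct order inside the trace. A minor additional remark is that everything is well defined on projective classes, since rescaling the representatives only rescales the matrices involved.
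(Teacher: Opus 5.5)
Your proposal is correct and follows essentially the same route as the paper: it derives $T_{APB}Y_\delta = A\,(T_PY_\delta)\,B$ from $\ker(APB)=B^{-1}\ker P$ and $\operatorname{im}(APB)=A\operatorname{im}P$, and combines this with the trace invariance $\operatorname{tr}\bigl(AXB\,(A^\dagger QB^\dagger)^T\bigr)=\operatorname{tr}(XQ^T)$. You also remark explicitly that $APB$ again lies in $Y_\delta^{\rm sm}$, so the tangent-space formula applies there; the paper leaves this small point implicit.
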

\begin{proof}
Note that
\begin{equation}
\label{eqn:trace}
    {\rm tr}\left(
        P\,Q^T
    \right)
    = {\rm tr}\left(
        (APB)\,
        (A^\dagger QB^\dagger)^T
    \right).
\end{equation}
Using the identities
\begin{itemize}
\item $\ker(APB)
    = \ker(PB)=B^{-1} \ker P$,
\item $\operatorname{im}(APB)
    = \operatorname{im}(AP)
    = A\,\operatorname{im}P$,
\end{itemize}
we deduce for any $M\in\Hom(K^n, K^m)$ that
\begin{align*}
    (AMB)\,\ker(APB)
        \subseteq\operatorname{im}(APB)
    &\;\Longleftrightarrow\;
        (AMB)\,(B^{-1}\ker P)
        \subseteq A\,\operatorname{im}P \\
    &\;\Longleftrightarrow\;
        M\,\ker P
        \subseteq\operatorname{im}P.
\end{align*}
It follows that
\begin{equation}
\label{eqn:ker-im}
    A\,(T_PY_\delta)\,B = T_{APB}Y_\delta.
\end{equation}
Consequently,
\begin{align*}
    T_PY_\delta\subseteq H_Q
    & \;\Longleftrightarrow\;
        \left<
            T_PY_\delta,\, Q
        \right> = 0 \\
        \tag{by \eqref{eqn:trace}}
    &\;\Longleftrightarrow\;
        \left<
            A\,(T_PY_\delta)\, B,\,
            A^\dagger Q B^\dagger
        \right> = 0 \\
        \tag{by \eqref{eqn:ker-im}}
    & \;\Longleftrightarrow\;
        \left<
            T_{APB}Y_\delta,\,
            A^\dagger QB^\dagger
        \right> = 0
    \;\Longleftrightarrow\;
        T_{APB}Y_\delta
        \subseteq H_{A^\dagger QB^\dagger}.
\end{align*}
This completes the proof.
\end{proof}

\begin{lemma}
\label{lem:PQT}
Let $P\in Y_\delta^{\rm sm}$ and $Q\in\bP^{N}$. Then
$$
    T_PY_\delta\subseteq H_Q
    \quad\Longleftrightarrow\quad
    PQ^T=0
    \quad\text{and}\quad
    P^TQ=0.
$$
\end{lemma}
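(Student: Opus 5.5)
Lemma~\ref{lem:inv-tr} lets us reduce to the normal form $P_0$. Since $P$ is a smooth point, it has rank exactly $\delta-1$. Choose $A\in\GL_m(K)$ and $B\in\GL_n(K)$ with $P=AP_0B$.

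By Lemma~\ref{lem:inv-tr}, applied with $A^{-1},B^{-1}$, we have $T_PY_\delta\subseteq H_Q$ if and only if $T_{P_0}Y_\delta\subseteq H_{Q'}$, where $Q'\colonequals A^TQB^T$. Here we use that $(A^{-1})^\dagger=A^T$ and $(B^{-1})^\dagger=B^T$.

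On the other side, compute
\[
PQ^T=AP_0BBQ'^{\,T}\cdots
\]
More carefully, substituting $Q=A^\dagger Q'B^\dagger$ gives
\[
PQ^T=AP_0B\,(B^{-1}Q'^{\,T}A^{-1})=AP_0Q'^{\,T}A^{-1},
\]
so $PQ^T=0$ if and only if $P_0Q'^{\,T}=0$. Similarly,
\[
P^TQ=B^TP_0^TA^T\,(A^{-1})^TQ'(B^{-1})^T=B^TP_0^TQ'(B^T)^{-1},
\]
so $P^TQ=0$ if and only if $P_0^TQ'=0$. Thus both conditions in the statement are invariant under the substitution, and it suffices to prove the lemma for $P=P_0$ and arbitrary $Q$.

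For $P=P_0$, Lemma~\ref{lem:local_calculation} (more precisely, its proof) shows that $T_{P_0}Y_\delta\subseteq H_Q$ exactly when $Q$ is supported in the bottom-right $(m-\delta+1)\times(n-\delta+1)$ block. Write
\[
Q=\begin{pmatrix} Q_{11}&Q_{12}\\ Q_{21}&Q_{22}\end{pmatrix}.
\]
Then $P_0Q^T$ is the $m\times m$ matrix whose first $\delta-1$ rows are $(Q_{11}^T\ \ Q_{21}^T)$ and whose remaining rows are zero. So $P_0Q^T=0$ if and only if $Q_{11}=0$ and $Q_{21}=0$. Likewise, $P_0^TQ$ has first $\delta-1$ rows $(Q_{11}\ \ Q_{12})$ and the rest zero, so $P_0^TQ=0$ if and only if $Q_{11}=0$ and $Q_{12}=0$. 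Together these say exactly that only $Q_{22}$ may be nonzero, which matches the tangent-hyperplane condition.

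The only delicate point is the bookkeeping of transposes and inverses in the reduction step; everything else is a direct block computation. As a sanity check, the resulting condition is symmetric in the sense that it says the row and column spaces of $Q$ are orthogonal to those of $P$.
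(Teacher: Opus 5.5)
Your proof is correct and follows essentially the same route as the paper. You reduce to $P_0$ via Lemma~\ref{lem:inv-tr}, check that $PQ^T=0$ and $P^TQ=0$ transform by conjugation, and then use the block description from Lemma~\ref{lem:local_calculation}. The differences are cosmetic: you write $P=AP_0B$ and apply the lemma to $A^{-1},B^{-1}$, and you spell out the block computation that the paper only asserts. You should also delete the abandoned partial line $PQ^T=AP_0BBQ'^{\,T}\cdots$.
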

\begin{proof}
Choose $A\in\GL_m(K)$ and $B\in\GL_n(K)$ such that
$
    APB = P_0,
$
and set
$$
    Q'\colonequals A^\dagger QB^\dagger,
    \qquad
    A^\dagger=(A^{-1})^T,
    \qquad
    B^\dagger=(B^{-1})^T.
$$
By Lemma~\ref{lem:inv-tr},
$$
    T_PY_\delta\subseteq H_Q
    \quad\Longleftrightarrow\quad
    T_{P_0}Y_\delta\subseteq H_{Q'}.
$$
By Lemma~\ref{lem:local_calculation}, the latter condition holds if and only if $Q'$ has only a lower-right block. This is equivalent to
$$
    P_0(Q')^T = 0
    \qquad\text{and}\qquad
    P_0^TQ' = 0.
$$
On the other hand,
$$
    P_0(Q')^T
    = (APB)(A^\dagger QB^\dagger)^T
    = A(PQ^T)A^{-1}
$$
and
$$
    P_0^TQ'
    =
    (APB)^T(A^\dagger QB^\dagger)
    =
    B^T(P^TQ)(B^T)^{-1}.
$$
Thus, the last condition is equivalent to $PQ^T = 0$ and $P^TQ = 0$.
\end{proof}

\begin{prop}
\label{prop:det-var_reflex}
For any $(P, Q)\in Y_\delta^{\rm sm}\times Y_{n-\delta+2}^{\rm sm}$,
$$
    T_PY_\delta\subseteq H_Q
    \quad\Longleftrightarrow\quad
    T_QY_{n-\delta+2}\subseteq H_P.
$$
In particular, $Y_\delta$ has dual variety $Y_{n-\delta+2}$ and is reflexive.
\end{prop}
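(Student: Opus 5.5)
The plan is to reduce the symmetric tangency statement to Lemma~\ref{lem:PQT}, applied once to $Y_\delta$ and once to $Y_{n-\delta+2}$. Write $\delta' \colonequals n-\delta+2$. Since $2\leq\delta\leq n$, we have $2\leq\delta'\leq n$, so $Y_{\delta'}$ is again a determinantal variety of $m\times n$ matrices of rank $<\delta'$. Lemma~\ref{lem:PQT} was proved for an arbitrary admissible $\delta$, so it applies verbatim to $Y_{\delta'}$. It gives, for $Q\in Y_{\delta'}^{\rm sm}$ and $P\in\bP^N$,
$$
    T_QY_{\delta'}\subseteq H_P
    \;\Longleftrightarrow\;
    QP^T=0\ \text{and}\ Q^TP=0.
$$
For $P\in Y_\delta^{\rm sm}$, the same lemma gives $T_PY_\delta\subseteq H_Q$ if and only if $PQ^T=0$ and $P^TQ=0$. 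Transposing, $QP^T=(PQ^T)^T$ and $Q^TP=(P^TQ)^T$, so the two pairs of conditions coincide. This yields the displayed equivalence.

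For the dual variety, I will show that $\pi_2(C(Y_\delta))=Y_{\delta'}$. The pairs $(P,Q)$ with $P\in Y_\delta^{\rm sm}$ and $T_PY_\delta\subseteq H_Q$ are exactly those with $PQ^T=0$ and $P^TQ=0$. These conditions say that the row space of $Q$ is orthogonal to the row space of $P$ and the column space of $Q$ is orthogonal to the column space of $P$. Since $\rk P=\delta-1$, this forces $\rk Q\leq\min(m,n)-(\delta-1)=n-\delta+1<\delta'$, so $Q\in Y_{\delta'}$. By Lemma~\ref{lem:local_calculation} together with the $\GL_m\times\GL_n$-equivariance in Lemma~\ref{lem:inv-tr}, every matrix of rank $n-\delta+1$ arises this way: given such a $Q$, normalize it to the lower-right block form and take $P$ to be the corresponding $P_0$. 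Hence $\pi_2$ of the open dense part of $C(Y_\delta)$ is dense in $Y_{\delta'}$. Because $C(Y_\delta)$ is projective, its image is closed, so $Y_\delta^*=Y_{\delta'}$.

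For reflexivity, I need $C(Y_\delta)=C(Y_{\delta'})$ under the identifications \eqref{eqn:reflex} together with $(\bP^N)^*\cong\bP^N$ via $P\mapsto H_P$. These identifications are compatible in the sense that $(P,H_Q)\mapsto(H_Q,P)$ corresponds to swapping the factors. Let $U$ be the set of pairs $(P,Q)\in Y_\delta^{\rm sm}\times Y_{\delta'}^{\rm sm}$ with $PQ^T=0$ and $P^TQ=0$. Both conormal varieties are closures of open dense subsets of their incidence sets, so I will show that $U$ is dense in each of them.

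Here $C(Y_\delta)$ is irreducible, being a projective bundle over $Y_\delta^{\rm sm}$ closed up. Its subset with $Q$ of full rank $\delta'-1$ is nonempty and open: in the fiber over $P_0$ from Lemma~\ref{lem:local_calculation}, a general $(m-\delta+1)\times(n-\delta+1)$ block has rank $n-\delta+1=\delta'-1$. So $U$ is dense in $C(Y_\delta)$. Symmetrically, $U$ is dense in $C(Y_{\delta'})$, the roles of $\delta$ and $\delta'$ being exchanged since $n-\delta'+2=\delta$. Taking closures gives $C(Y_\delta)=C(Y_{\delta'})$ after the swap, which is reflexivity.

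The main obstacle is this density step. The rank conditions must cut out a nonempty open subset in both conormal varieties simultaneously, and I need to check that the closure taken in the definition does not add extra components. Irreducibility of both conormal varieties handles the latter, so I expect the argument to go through.
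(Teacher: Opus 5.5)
Your proposal is correct and follows essentially the same route as the paper: you apply Lemma~\ref{lem:PQT} to both $Y_\delta$ and $Y_{n-\delta+2}$ and transpose, then use the fiber description from Lemmas~\ref{lem:local_calculation} and \ref{lem:inv-tr} to show that the common incidence locus (with $P$ and $Q$ of full rank) is dense in both conormal varieties, and finally take closures. The only difference is that you compute $Y_\delta^*=Y_{n-\delta+2}$ directly, via a rank bound and equivariance, before proving reflexivity, whereas the paper reads off the dual variety by projecting after identifying $C(Y_\delta)$ with $C(Y_{n-\delta+2})$.
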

\begin{proof}
Set $\delta' = n-\delta+2$. By Lemma~\ref{lem:PQT},
$$
    T_PY_\delta\subseteq H_Q
    \quad\Longleftrightarrow\quad
    PQ^T=0
    \quad\text{and}\quad
    P^TQ=0.
$$
Applying the same lemma with $P$ and $Q$ reversed gives
$$
    T_QY_{\delta'}\subseteq H_P
    \quad\Longleftrightarrow\quad
    QP^T=0
    \quad\text{and}\quad
    Q^TP=0.
$$
Therefore, the condition $T_PY_\delta\subseteq H_Q$ is equivalent to $T_QY_{\delta'}\subseteq H_P$.

Interchanging the factors gives an isomorphism
$$
    \left\{
        (P, Q)\in Y_{\delta}^{\rm sm}
            \times Y_{\delta'}^{\rm sm}
    \;\middle|\;
        T_PY_\delta\subseteq H_Q
    \right\}
    \cong
    \left\{
        (Q, P)\in Y_{\delta'}^{\rm sm}
            \times Y_{\delta}^{\rm sm}
    \;\middle|\;
        T_QY_{\delta'}\subseteq H_P
    \right\}.
$$
By Lemmas~\ref{lem:local_calculation} and \ref{lem:inv-tr}, each fiber over $Y_\delta^{\rm sm}$ under the map $C(Y_\delta)\to Y_\delta$ is the projective space of $(m-\delta+1)\times(n-\delta+1)$ matrices. Since $m\geq n$, matrices of full column rank form a nonempty open dense subset. Within each fiber, these correspond precisely to the points with $Q\in Y_{\delta'}^{\rm sm}$. Thus, the incidence locus on the left is dense in $C(Y_\delta)$. As $n-\delta'+2 = \delta$, the same argument shows that the locus on the right is dense in $C(Y_{\delta'})$.

Taking closures shows that interchanging the factors identifies $C(Y_\delta)$ with $C(Y_{\delta'})$. Projecting to the factors gives $Y_\delta^*=Y_{\delta'}$ and $Y_{\delta'}^*=Y_\delta$. In particular, $C(Y_\delta)\cong C(Y_\delta^*)$, so $Y_\delta$ is reflexive.
\end{proof}

\bibliographystyle{alpha}
\bibliography{main}

\begin{bibdiv}
\begin{biblist}

\bib{AGL19}{article}{
      author={Antrobus, Jared},
      author={Gluesing-Luerssen, Heide},
       title={Maximal {F}errers diagram codes: constructions and genericity
  considerations},
        date={2019},
        ISSN={0018-9448,1557-9654},
     journal={IEEE Trans. Inform. Theory},
      volume={65},
      number={10},
       pages={6204\ndash 6223},
         url={https://doi.org/10.1109/TIT.2019.2926256},
      review={\MR{4009199}},
}

\bib{BH86}{article}{
      author={Ballico, E.},
      author={Hefez, A.},
       title={On the {G}alois group associated to a generically \'{e}tale
  morphism},
        date={1986},
        ISSN={0092-7872},
     journal={Comm. Algebra},
      volume={14},
      number={5},
       pages={899\ndash 909},
         url={https://doi.org/10.1080/00927878608823344},
      review={\MR{834472}},
}

\bib{BHLPRW22}{article}{
      author={Bartz, Hannes},
      author={Holzbaur, Lukas},
      author={Liu, Hedongliang},
      author={Puchinger, Sven},
      author={Renner, Julian},
      author={Wachter-Zeh, Antonia},
       title={Rank-metric codes and their applications},
    language={English},
        date={2022},
        ISSN={1567-2190},
     journal={Found. Trends Commun. Inf. Theory},
      volume={19},
      number={3},
       pages={390\ndash 546},
}

\bib{BR20}{article}{
      author={Byrne, Eimear},
      author={Ravagnani, Alberto},
       title={Partition-balanced families of codes and asymptotic enumeration
  in coding theory},
        date={2020},
        ISSN={0097-3165,1096-0899},
     journal={J. Combin. Theory Ser. A},
      volume={171},
       pages={105169, 39},
         url={https://doi.org/10.1016/j.jcta.2019.105169},
      review={\MR{4035968}},
}

\bib{CGH23}{article}{
      author={Cluckers, Raf},
      author={Glazer, Itay},
      author={Hendel, Yotam~I.},
       title={A number theoretic characterization of {$E$}-smooth and ({FRS})
  morphisms: estimates on the number of {$\mathbb{Z}/p^k\mathbb{Z}$}-points},
        date={2023},
        ISSN={1937-0652,1944-7833},
     journal={Algebra Number Theory},
      volume={17},
      number={12},
       pages={2229\ndash 2260},
         url={https://doi.org/10.2140/ant.2023.17.2229},
      review={\MR{4650394}},
}

\bib{CMPZ17}{article}{
      author={Csajb{\'o}k, Bence},
      author={Marino, Giuseppe},
      author={Polverino, Olga},
      author={Zullo, Ferdinando},
       title={Maximum scattered linear sets and {MRD}-codes},
        date={2017},
        ISSN={0925-9899,1572-9192},
     journal={J. Algebraic Combin.},
      volume={46},
      number={3-4},
       pages={517\ndash 531},
         url={https://doi.org/10.1007/s10801-017-0762-6},
}

\bib{Ent21}{article}{
      author={Entin, Alexei},
       title={Monodromy of hyperplane sections of curves and decomposition
  statistics over finite fields},
        date={2021},
        ISSN={1073-7928,1687-0247},
     journal={Int. Math. Res. Not. IMRN},
      number={14},
       pages={10409\ndash 10441},
         url={https://doi.org/10.1093/imrn/rnz120},
      review={\MR{4285725}},
}

\bib{Ful98}{book}{
      author={Fulton, William},
       title={Intersection theory},
     edition={Second edition},
   publisher={Springer-Verlag, Berlin},
        date={1998},
        ISBN={3-540-62046-X; 0-387-98549-2},
         url={https://doi.org/10.1007/978-1-4612-1700-8},
}

\bib{GHRW24}{article}{
      author={Gruica, Anina},
      author={Horlemann, Anna-Lena},
      author={Ravagnani, Alberto},
      author={Willenborg, Nadja},
       title={Densities of codes of various linearity degrees in
  translation-invariant metric spaces},
        date={2024},
        ISSN={0925-1022,1573-7586},
     journal={Des. Codes Cryptogr.},
      volume={92},
      number={3},
       pages={609\ndash 637},
         url={https://doi.org/10.1007/s10623-023-01236-2},
      review={\MR{4726088}},
}

\bib{GJLR20}{article}{
      author={Gorla, Elisa},
      author={Jurrius, Relinde},
      author={L{\'o}pez, Hiram~H.},
      author={Ravagnani, Alberto},
       title={Rank-metric codes and $q$-polymatroids},
        date={2020},
        ISSN={0925-9899,1572-9192},
     journal={J. Algebraic Combin.},
      volume={52},
      number={1},
       pages={1\ndash 19},
         url={https://doi.org/10.1007/s10801-019-00889-4},
}

\bib{GKR23}{article}{
      author={Gruica, Anina},
      author={K{\i}l{\i}{\c{c}}, Altan~B.},
      author={Ravagnani, Alberto},
       title={Rank-metric codes and their parameters},
        date={2023},
     journal={\href{https://arxiv.org/abs/2312.06282}{arXiv:2312.06282}},
}

\bib{GL20}{article}{
      author={Gluesing-Luerssen, Heide},
       title={On the sparseness of certain linear {MRD} codes},
        date={2020},
        ISSN={0024-3795},
     journal={Linear Algebra Appl.},
      volume={596},
       pages={145\ndash 168},
         url={https://doi.org/10.1016/j.laa.2020.03.006},
      review={\MR{4076988}},
}

\bib{GR22}{article}{
      author={Gruica, Anina},
      author={Ravagnani, Alberto},
       title={Common complements of linear subspaces and the sparseness of
  {MRD} codes},
        date={2022},
     journal={SIAM J. Appl. Algebra Geom.},
      volume={6},
      number={2},
       pages={79\ndash 110},
         url={https://doi.org/10.1137/21M1428947},
      review={\MR{4405182}},
}

\bib{GRSZ23}{article}{
      author={Gruica, Anina},
      author={Ravagnani, Alberto},
      author={Sheekey, John},
      author={Zullo, Ferdinando},
       title={Rank-metric codes, semifields, and the average critical problem},
        date={2023},
        ISSN={0895-4801,1095-7146},
     journal={SIAM J. Discrete Math.},
      volume={37},
      number={2},
       pages={1079\ndash 1117},
         url={https://doi.org/10.1137/22M1486893},
      review={\MR{4602503}},
}

\bib{Har92}{book}{
      author={Harris, Joe},
       title={Algebraic geometry},
      series={Graduate Texts in Mathematics},
   publisher={Springer-Verlag, New York},
        date={1992},
      volume={133},
        ISBN={0-387-97716-3},
         url={https://doi.org/10.1007/978-1-4757-2189-8},
        note={A first course},
      review={\MR{1182558}},
}

\bib{HHLT22}{article}{
      author={Hao, Jing},
      author={Huang, Han},
      author={Livshyts, Galyna~V.},
      author={Tikhomirov, Konstantin},
       title={Distribution of the minimum distance of random linear codes},
        date={2022},
        ISSN={0018-9448,1557-9654},
     journal={IEEE Trans. Inform. Theory},
      volume={68},
      number={10},
       pages={6388\ndash 6401},
         url={https://doi.org/10.1109/tit.2022.3170341},
      review={\MR{4502542}},
}

\bib{Kai14}{article}{
      author={Kaipa, Krishna~V.},
       title={An asymptotic formula in {$q$} for the number of {$[n,k]$}
  {$q$}-ary {MDS} codes},
        date={2014},
        ISSN={0018-9448,1557-9654},
     journal={IEEE Trans. Inform. Theory},
      volume={60},
      number={11},
       pages={7047\ndash 7057},
         url={https://doi.org/10.1109/TIT.2014.2349903},
      review={\MR{3273040}},
}

\bib{KS22}{article}{
      author={Kmentt, Philip},
      author={Shute, Alec},
       title={The {B}ertini irreducibility theorem for higher codimensional
  slices},
        date={2022},
        ISSN={1071-5797,1090-2465},
     journal={Finite Fields Appl.},
      volume={83},
       pages={Paper No. 102085, 7},
         url={https://doi.org/10.1016/j.ffa.2022.102085},
      review={\MR{4456450}},
}

\bib{Loi14}{article}{
      author={Loidreau, P.},
       title={Asymptotic behaviour of codes in rank metric over finite fields},
        date={2014},
        ISSN={0925-1022,1573-7586},
     journal={Des. Codes Cryptogr.},
      volume={71},
      number={1},
       pages={105\ndash 118},
         url={https://doi.org/10.1007/s10623-012-9716-0},
      review={\MR{3167050}},
}

\bib{LW54}{article}{
      author={Lang, Serge},
      author={Weil, Andr\'e},
       title={Number of points of varieties in finite fields},
        date={1954},
        ISSN={0002-9327,1080-6377},
     journal={Amer. J. Math.},
      volume={76},
       pages={819\ndash 827},
         url={https://doi.org/10.2307/2372655},
      review={\MR{65218}},
}

\bib{NHTRR18}{article}{
      author={Neri, Alessandro},
      author={Horlemann-Trautmann, Anna-Lena},
      author={Randrianarisoa, Tovohery},
      author={Rosenthal, Joachim},
       title={On the genericity of maximum rank distance and {G}abidulin
  codes},
        date={2018},
        ISSN={0925-1022,1573-7586},
     journal={Des. Codes Cryptogr.},
      volume={86},
      number={2},
       pages={341\ndash 363},
         url={https://doi.org/10.1007/s10623-017-0354-4},
      review={\MR{3756146}},
}

\bib{PS22}{article}{
      author={Poonen, Bjorn},
      author={Slavov, Kaloyan},
       title={The exceptional locus in the {B}ertini irreducibility theorem for
  a morphism},
        date={2022},
        ISSN={1073-7928},
     journal={Int. Math. Res. Not. IMRN},
      number={6},
       pages={4503\ndash 4513},
         url={https://doi.org/10.1093/imrn/rnaa182},
      review={\MR{4391895}},
}

\bib{Sha13}{book}{
      author={Shafarevich, Igor~R.},
       title={Basic algebraic geometry. 1},
     edition={Third edition},
   publisher={Springer, Heidelberg},
        date={2013},
        ISBN={978-3-642-37955-0; 978-3-642-37956-7},
        note={Varieties in projective space},
      review={\MR{3100243}},
}

\bib{SilvaK11}{article}{
      author={Silva, Danilo},
      author={Kschischang, Frank~R.},
       title={Universal secure network coding via rank-metric codes},
        date={2011},
        ISSN={0018-9448,1557-9654},
     journal={IEEE Trans. Inform. Theory},
      volume={57},
      number={2},
       pages={1124\ndash 1135},
         url={https://doi.org/10.1109/TIT.2010.2090212},
}

\bib{SilvaKS08}{article}{
      author={Silva, Danilo},
      author={Kschischang, Frank~R.},
      author={K{\"o}tter, Ralf},
       title={A rank-metric approach to error control in random network
  coding},
        date={2008},
        ISSN={0018-9448,1557-9654},
     journal={IEEE Trans. Inform. Theory},
      volume={54},
      number={9},
       pages={3951\ndash 3967},
         url={https://doi.org/10.1109/TIT.2008.928291},
}

\end{biblist}
\end{bibdiv}

\ContactInfo
\end{document}